\tikzstyle{mid>}=[decoration={markings, mark=at position 0.5 with {\arrow{>}}}, postaction={decorate}]
\tikzstyle{mid<}=[decoration={markings, mark=at position 0.5 with {\arrow{<}}}, postaction={decorate}]
\newcommand\bX{{\mathbb{X}}}
\newcommand\bY{{\mathbb{Y}}}
\newcommand\oY{{\overline{Y}}}
\newcommand\tY{\widetilde{Y}}
\newcommand\Gr{{\sf{Gr}}}
\newcommand\G{{\mathbb{G}}}
\DeclareMathOperator\Spec{Spec}
\renewcommand\P{{\mathbb{P}}}   
\newcommand\bA{{\mathbb{A}}}
\newcommand\Dqcoh[1]{D_{qcoh}(#1)}
\newcommand\sE{{\mathcal{E}}}
\newcommand\sF{{\mathcal{F}}}
\newcommand\sG{{\mathcal{G}}}
\newcommand\sL{{\mathcal{L}}}
\newcommand\sM{{\mathcal{M}}}
\renewcommand\O{\mathcal{O}}    
\newcommand\sP{{\mathcal{P}}}   
\newcommand\sQ{{\mathcal{Q}}}
\newcommand\sT{{\mathcal{T}}}
\newcommand\sV{{\mathcal{V}}}
\newcommand\D{\mathbb D}
\newcommand{\RG}{\mathcal{R}ep(G)}
\DeclareMathOperator\colim{colim}
\newcommand\Cone{{\operatorname{Cone}}}
\newcommand\sC{{\mathcal{C}}}
\newcommand\sD{{\mathcal{D}}}
\newcommand\tsC{{\tilde{\mathcal{C}}}}
\newcommand\tsD{{\tilde{\mathcal{D}}}}
\DeclareMathOperator{\id}{id}
\DeclareMathOperator{\Hom}{Hom}
\DeclareMathOperator{\End}{End}
\DeclareMathOperator{\Ext}{Ext}
\newcommand\Kom{{\sf{Kom}}}
\newcommand\sH{{\mathcal{H}}}
\renewcommand\1{{{\bf 1}}}      
\newcommand\A{{\sf{A}}}
\newcommand\E{{\sf{E}}}
\newcommand\F{{\sf{F}}}
\newcommand\R{{\sf{R}}}
\newcommand\T{{\sf{T}}}
\newcommand\X{{\sf{X}}}
\newcommand\Y{{\sf{Y}}}
\newcommand\dsss\sqsubseteq
\newcommand{\K}{\mathcal{K}}
\newcommand{\tK}{\widetilde{\mathcal{K}}}
\newcommand\uk{{\underline{k}}}
\newcommand\spn{{\rm{span}}}
\newcommand\Sym{{\rm{Sym}}}
\newcommand\tr{{\rm{tr}}}
\newcommand\ch{{\rm{ch}}}
\newcommand\bi{{\mathbf{i}}}
\newcommand\gen{{\mathrm{gen}}}
\newcommand\tpi{{\tilde{\pi}}}
\newcommand{\tg}{\widetilde{\mathfrak{g}}}
\newcommand{\g}{\mathfrak{g}}
\newcommand{\hI}{\widehat{I}}
\renewcommand{\sl}{\mathfrak{sl}}
\newcommand{\Lsl}{L\mathfrak{sl}}
\newcommand{\gl}{\mathfrak{gl}}
\newcommand\hW{\widehat{W}}
\newcommand\tsN{\widetilde{{\mathcal N}}}
\newcommand\sN{{\mathcal N}}
\newcommand\sR{{\mathcal R}}    
\newcommand{\N}{\mathbb{N}}
\newcommand{\bZ}{\mathbb Z}
\newcommand{\C}{\mathbb{C}}
\newcommand{\bR}{\mathbb{R}}
\newcommand\la{\langle}
\newcommand\ra{\rangle}
\newcommand\ttimes{\tilde{\times}}
\renewcommand\l{\lambda}        
\newcommand\vareps{\varepsilon}
\renewcommand\t{{\theta}}       
\newcommand\ux{{\underline{x}}}
\newcommand\ul{{\underline{\lambda}}}
\theoremstyle{plain}
\newtheorem{Theorem}{Theorem}[section]
\newtheorem{Proposition}[Theorem]{Proposition}
\newtheorem{Lemma}[Theorem]{Lemma}
\newtheorem{Corollary}[Theorem]{Corollary}
\theoremstyle{definition}
\theoremstyle{remark}
\newtheorem{Remark}[Theorem]{Remark}
\begin{document}

\setcounter{tocdepth}{1}

\title{Exotic t-structures and actions of quantum affine algebras}

\author{Sabin Cautis}
\email{cautis@math.ubc.ca}
\address{Department of Mathematics\\ University of British Columbia \\ Vancouver BC, Canada}

\author{Clemens Koppensteiner}
\email{clemens@ias.edu}
\address{School of Mathematics\\ Institute for Advanced Study \\ Princeton, NJ, USA}

\begin{abstract}
We explain how quantum affine algebra actions can be used to systematically construct ``exotic'' t-structures. The main idea, roughly speaking, is to take advantage of the two different descriptions of quantum affine algebras, the Drinfeld--Jimbo and the Kac--Moody realizations. 

Our main application is to obtain exotic t-structures on certain convolution varieties defined using the Beilinson--Drinfeld and affine Grassmannians. These varieties play an important role in the geometric Langlands program, knot homology constructions, K-theoretic geometric Satake and the coherent Satake category. As a special case we also recover the exotic t-structures of Bezrukavnikov--Mirkovi\'c \cite{BM} on the (Grothendieck--)Springer resolution in type A. 

\end{abstract}


\maketitle

\tableofcontents

\section{Introduction}

Consider a semisimple complex group $G$ and let $\RG$ denote the usual monoidal category of finite-dimensional representations of $G$. Consider further the Langlands dual group $G^\vee$ and its affine Grassmannian $\Gr = G^\vee((t))/G^\vee[[z]]$. The abelian category $P(\Gr)$ of $G^\vee[[z]]$-equivariant perverse sheaves on $\Gr$ is monoidal with respect to the convolution product. The geometric Satake correspondence of Mirkovi\'c--Vilonen \cite{MVi} states that there is an equivalence of monoidal categories
$$\RG \cong P(\Gr).$$
Subsequently $P(\Gr)$ is called the Satake category. 

There is a parallel story one can try to pursue on the coherent sheaf side -- namely if one replaces constructible sheaves with coherent sheaves. The analogue of perverse sheaves in then the notion of perverse coherent sheaves, in the sense of \cite{AB}. This notion is in general not as well behaved. 

One of the obvious issues is that over $\bR$ (i.e. on the constructible sheaf side) one can always use the middle perversity function $p(x) = - \frac{1}{2} \dim_{\bR}(x)$. On the coherent sheaf side this choice is not always possible since $\dim_\C(x)$ may not always be even. However, when it is, choosing $p(x) = -\frac{1}{2}\dim_\C(x)$ leads to a nice theory theory which includes, for instance, the notion of intermediate extension \cite[Section 4]{AB}. 

In the case of $\Gr$ the $G^\vee[[z]]$-orbits have the same parity on any connected component. This allows us to use the middle perversity function to obtain a well behaved abelian category $PCoh(\Gr)$. Moreover, as shown in \cite[Section 8]{BFM}, the convolution product on $\Gr$ preserves $PCoh(\Gr)$. Hence, $PCoh(\Gr)$ is monoidal and we call this the coherent Satake category. 

The coherent Satake category $PCoh(\Gr)$ behaves quite differently from $P(\Gr)$ and is interesting in its own way. For example, it is closely tied to $4d$ ${\mathcal N} = 2$ gauge theory. Although not semisimple it carries the structure of a monoidal cluster category \cite{CW}. In order to understand $PCoh(\Gr)$ one needs to understand the convolution product and subsequently the convolution varieties, as we now explain.

\subsection{Convolution varieties} \label{sec:conv-vars}

The $G^\vee[[z]]$-orbits of $\Gr$ are indexed by dominant weights of $G$. For such a weight $\l$ we denote by $\Gr^\l$ the closure of the corresponding orbit in $\Gr$. For a sequence $\ul = (\l_1, \dots, \l_n)$ of such weights we can consider the convolution product variety 
$$\Gr^{\ul} = \Gr^{\l_1} \ttimes \cdots \ttimes \Gr^{\l_n}.$$
This variety comes equipped with a natural map to $\Gr$ whose image is $\Gr^{\sum_i \l_i}$. The resulting map $m: \Gr^{\ul} \rightarrow \Gr^{\sum_i \l_i}$, which is semi-small, is used to define the convolution product on $PCoh(\Gr)$. 

{\bf Example.} To illustrate the type of geometry involved let us consider the simplest nontrivial case. We take $G=SL_2$ and $\l_1=\l_2=\omega$ where $\omega$ is the fundamental weight. Then $\Gr^{(\omega,\omega)}$ is isomorphic to the natural bundle compactification of $T^\star \P^1$ (the cotagent bundle of $\P^1$) while $\Gr^{2 \omega}$ is isomorphic to the quadric cone in $\P^3$. The map $m: \Gr^{(\omega,\omega)} \rightarrow \Gr^{2 \omega}$ is then the projection which collapses the zero section (i.e. the $-2$ curve) in $T^\star \P^1$ to a point. Note that $\Gr^{(\omega,\omega)}$ is smooth while $\Gr^{2 \omega}$ has one singular point. 

These convolution spaces are interesting in themselves. For instance, in \cite{CK1} a plan was laid out for using these varieties to categorify the Reshetikhin--Turaev link invariants. This program was subsequently carried out for $G=SL_m$ in a series of papers. Meanwhile in \cite{CK5} these spaces are used to suggest a (quantum) K-theoretic analogue of the geometric Satake equivalence.

One of the goals of this paper is to study natural t-structures on the derived category of coherent sheaves of these convolution varieties. As mentioned above, the varieties $\Gr^\l$ carry a natural t-structure corresponding to perverse coherent sheaves. We would like to find a ``lift'' of these t-structures to $\Gr^{\ul}$. Since the dimensions of the $G^\vee[[z]]$-orbits in $\Gr^{\ul}$ do not have the same parity, the category of perverse coherent sheaves on $\Gr^{\ul}$ is not a good choice. 

In this paper we will achieve this goal in the case $G=SL_m$ and $\ul = (\omega_{k_1}, \dots, \omega_{k_n})$ where $\omega_1, \dots, \omega_{m-1}$ are the fundamental weights of $SL_m$. Following our notation from earlier papers we will write $\uk = (k_1, \dots, k_n)$ and denote $\Gr^{\ul}$ in this case as $Y(\uk)$. One of our main results, Theorem \ref{thm:skew-app}, defines and characterizes a certain family of natural t-structure on $Y(\uk)$.  

The argument behind Theorem \ref{thm:skew-app} involves a general approach to constructing t-structures using categorical actions of quantum affine algebras. Before explaining this in more detail (see Section \ref{sec:qaffaction}) we describe further motivation for this paper coming from an approach to proving some conjectures by Lusztig. 

\subsection{Lusztig's conjectures and the work of Bezrukavnikov, Mirkovi\'c and Rumynin}

Let $G$ be a semisimple group with Lie algebra $\g$. Lusztig \cite{L} conjectured the existence and properties of certain \enquote{canonical} bases in the Grothendieck group of Springer fibers. These bases can be used to derive precise information about numerical invariants of $\g$-modules. Recently, Bezrukavnikov and Mirkovi\'c \cite{BM} proved most of these conjectures. A key part of their argument is the construction of two ``exotic'' t-structures on the Springer resolution $\pi\colon \tsN \rightarrow \sN$ of the nilpotent cone $\sN$ (as well as on the Grothendieck--Springer resolution $\tpi\colon \tg \rightarrow \g$). 

The first t-structure is called the ``representation theoretic (RT) exotic t-structure''. It is defined by a locally free tilting bundle $\sE$ on $\tg$, whose pullback to $\tsN$ is also tilting. The bundle $\sE$ is constructed by repeatedly applying certain reflection functors $\sR_i$ for $i \in \hI$ (the affine Dynkin diagram of $\g$) to the structure sheaf $\O_{\tg}$. The most difficult part of this argument is proving the necessary vanishing condition $\Ext^{>0}(\sE,\sE) = 0$ (see \cite[Section 2.4]{BM}). 

This is done in two steps. First one reduces to the corresponding vanishing on the formal neighborhood of the zero section in $\tg$ \cite[Section 2.5.1]{BM}. Then one uses the derived equivalence, studied in \cite{BMR}, between the category of coherent sheaves on this formal neighbourhood and the derived category of $\g$-modules with a fixed generalized central character. This requires passing to positive characteristic and then lifting back up. 

The second t-structure, called the ``perversely exotic t-structure'', is defined using the same tilting bundle $\sE$ together with the category of perverse coherent sheaves, in the sense of \cite{AB}, on $\sN$. This t-structure can also be described as follows. From \cite{ArkB} there exists an equivalence 
\begin{equation}\label{eq:Phi}
\Phi\colon D^G(\tsN) \xrightarrow{\sim} D(Perv_{Fl})
\end{equation}
between the derived category of $G$-equivariant coherent sheaves on $\tsN$ and the derived category of anti-spherical perverse sheaves on the affine flag variety $Fl$ of the dual group. Under this equivalence the perversely exotic t-structure on $D^G(\tsN)$ corresponds to the perverse t-structure on $D(Perv_{Fl})$ \cite[Lemma~6.2.4]{BM}. The equivalence $\Phi$ is ultimately used to reduce a key step in the proof of Lusztig's conjectures to a purity statement in $D(Perv_{Fl})$.

The perversely exotic t-structure was already studied in \cite{B}. That paper considered a functor 
$$\Psi\colon D^{G \times \C^\times}(\tsN) \rightarrow D({\sf{U}}_q\text{--}mod^0)$$
(which is almost an equivalence) and showed that irreducible perverse exotic sheaves on $D^{G \times \C^\times}(\tsN)$ correspond to indecomposable tilting objects in ${\sf U}_q\text{--}mod^0$. Here ${\sf U}_q$ is Lusztig's quantum enveloping algebra at a primitive root of unity $q \in \C$ and ${\sf U}_q\text{--}mod^0$ is the block containing the trivial module inside the category ${\sf U}_q\text{--}mod$ of finite dimensional graded ${\sf U}_q$-modules. 

The current paper is also motivated by our wish to better understand these t-structures and to find an alternative construction (see Section \ref{sec:finalrem}). In particular, we wanted to avoid having to pass to positive characteristic or having to use a deep result such as the derived equivalence from \cite{BMR}. This is because in other cases, such as our convolution varieties $Y(\uk)$, no such equivalences are known (or expected to exist). 

\subsection{Quantum affine algebra actions}\label{sec:qaffaction}

We refer to Section \ref{sec:action} for the precise definition of a categorical $U_q(L\gl_n)$ action. One of the main results of this paper, Theorems \ref{thm:sym:main} and \ref{thm:skew:main}, explains how one can use an action of $U_q(L\gl_n)$ to construct ``exotic'' t-structures. More precisely, suppose we start start with an action of $U_q(L\gl_n)$ on a graded triangulated category 
$$\tK = \ \ \bigoplus_{\mathclap{\uk = (k_1,\dots,k_n)}} \ \ \tK(\uk)$$ 
where multiplication of $q$ acts as an ``internal'' shift $\{1\}$ (rather than a cohomological shift $[1]$). Then, having fixed a t-structure on the highest weight category $\tK(\eta)$ where $\eta = (0,\dots,0,N)$, Theorem \ref{thm:sym:main} says that under some mild assumption on the t-structure on $\tK(\eta)$, there exists a unique t-structure on $\oplus_{\uk} \tK(\uk)$ such that
\begin{enumerate}
\item it matches the chosen t-structure on $\tK(\eta)$ and
\item the generators $\{\E_i,\F_i: i \in \hI\}$ of $U_q(L\gl_n)$ act by t-exact functors.
\end{enumerate} 
Then Theorem \ref{thm:skew:main} explains what to do if we have an action on a graded triangulated category $\K = \oplus_{\uk} \K(\uk)$ where $q$ acts as a {\it{cohomological}} shift. 

We can apply these results to our convolution varieties as follows. First we consider certain varieties $\bY(\uk)$, which are analogous to $Y(\uk)$, but occur in the geometry of the Beilinson--Drinfeld Grassmannian rather than the affine Grassmannian. In \cite{CK4} an action of $U_q(L\gl_n)$ on $\oplus_\uk D(\bY(\uk))$ is constructed where $q$ acts as a $\C^\times$-equivariance shift $\{1\}$. This allows us to apply Theorem \ref{thm:sym:main} with $\tK(\uk) := D(\bY(\uk))$ to obtain exotic t-structures on $\oplus_{\uk} D(\bY(\uk))$ (cf. Theorem \ref{thm:sym-app}). 

Then we take $\K(\uk) := D(Y(\uk))$. In the papers \cite{CKL1,CKL2,C2} one defined an action of $U_q(\gl_n)$ on $\K(\uk)$, which we extend to an action of $U_q(L\gl_n)$ in Section \ref{sec:cattk}. We then apply Theorem \ref{thm:skew:main} to obtain exotic t-structures on $\oplus_{\uk} D(Y(\uk))$ (cf. Theorem \ref{thm:skew-app}). 

As a consequence of these results one immediately recovers the RT exotic t-structure on $D(\tg)$ (Corollary \ref{cor:tg-app}) as well as the RT and perversely exotic t-structures on $D(\tsN)$ (Corollary \ref{cor:g-app}) in the case when $\g = \gl_m$. 

\subsection{Drinfeld--Jimbo vs.\ Kac--Moody}
The quantum group $U_q(L \gl_n)$ has two realizations: the Drinfeld--Jimbo (DJ) realization (sometimes refered to as the loop realization) and the Kac--Moody (KM) realization. The DJ realization is generated by 
$$\{E_i \otimes t^j, F_i \otimes t^j: i \in I \text{ and } j \in \bZ \}$$ 
whereas the KM one is generated by $E_i,F_i$ where $i \in \hI$ (the vertex set of the affine Dynkin diagram). When it is important to distinguish between these two we will write $U_q(L\gl_n)_{DJ}$ or $U_q(L\gl_n)_{KM}$. 

At the categorical level there is also a distinction between these two. In a categorical $U_q(L\gl_n)_{KM}$ the relations between functors are given by isomorphisms. For example, an important one is the ``$\sl_2$ relation''
$$\E_i \F_i \1_\l \cong \F_i \E_i \1_\l \oplus_{[\la \l,\alpha_i \ra]} \1_\l$$
where $\1_\l$ denotes the identity functor on the weight space indexed by $\l$. In particular, it makes sense for $U_q(L\gl_n)_{KM}$ to act on additive (graded) categories. Such actions have been studied in much detail \cite{CR,KL,Ro,C3}. 

On the other hand, the definition of a categorical action of $U_q(L\gl_n)_{DJ}$ is less clear (see \cite{CK4} for a working definition). What is apparent however, is that with such an action one needs some sort of triangulated structure to express all the relations between functors. For example, if $\la \alpha_i, \alpha_j \ra = -1$, then one requires an isomorphism of cones 
$$\Cone(\E_i \E_{j,1} \la -1 \ra \xrightarrow{\alpha} \E_{j,1} \E_i) \cong \Cone(\E_j \E_{i,1} \la -1 \ra \xrightarrow{\beta} \E_{i,1} \E_j)$$
where $\alpha$ and $\beta$ are the unique nonzero maps (here $\E_{i,1}, \E_{j,1}$ are the functors corresponding to $E_i \otimes t, E_j \otimes t$). In particular, it makes sense that $U_q(L\gl_n)_{DJ}$ should act on triangulated (graded) categories. 

To pass between the KM and DJ realizations one writes $E_0$ and $F_0$ as a linear combinations of compositions of $E_i \otimes t^j$ and $F_i \otimes t^j$. The analogous result at the categorical level of this is explained in Appendix \ref{sec:appdrinfeld}. It turns out that $\E_0,\F_0$ are given by complexes of functors corresponding to $E_i \otimes t^j, F_i \otimes t^j$. This suggests that, at the categorical level, the equivalence of these two realizations is achieved as an equivalence between homotopy categories: roughly speaking an equivalence between an $U_q(L\gl_n)_{DJ}$ action and the homotopy category of a $U_q(L\gl_n)_{KM}$ action. 

Now consider an action of $U_q(L\gl_n)_{DJ}$ on a triangulated (graded) category $\K$. Switching to the Kac--Moody realization one obtains an action of $U_q(L\gl_n)_{KM}$ on $\K$. In ideal circumstances, the triangulated category $\K$ would be the homotopy category of an additive category on which $U_q(L\gl_n)_{KM}$ acts. A weaker expectation is that $\K$ should be equipped with a t-structure where the generators of $U_q(L\gl_n)_{KM}$ is t-exact. The purpose of this paper is to make this expectation precise. 

Our main applications are geometric (Sections \ref{sec:examples} and \ref{sec:t}). This is not surprising since, going back to work of Nakajima \cite{N} on quiver varieties, actions of $U_q(\g)$ on categories of coherent sheaves often extend to an action of $U_q(L\g)$ actions -- the extra loop structure usually coming by way of line bundles. When $\g$ is of finite type it makes sense to expect that the generators of $U_q(L\g)_{KM}$ are exact with respect to some exotic t-structure.

\subsection{Further remarks}

The approach in this paper should give an explicit way to understand the irreducible objects in the heart of these t-structures. We expect that these are given as the image of the irreducible objects in the highest weight category under affine forest-like maps. Such maps are morphisms in $U_q(L\gl_n)_{KM}$ associated to a collection of affine trees (trees are discussed in Section \ref{sec:trees}). In many cases the highest weight category is the category of graded vector spaces so there is only one irreducible (up to grading shift). This not only gives a combinatorial way to identify the irreducibles but also an algebraic/algorithmic way to compute the $\Ext$'s between them -- namely these $\Ext$-spaces should be expressible in terms of quiver Hecke algebras (which act by natural transformations in a categorical action of $U_q(L\gl_n)_{KM}$). 

One case where such irreducibles have been studied in more detail is that of two-block Springer fibers \cite{AN}. In that paper, Anno and Nandakumar show that irreducibles are indexed in a natural way by affine crossingless matchings. In our language, these affine crossingless matchings correspond to affine forests where each strand is labeled $1$. More precisely, strands are generally labeled from $\{1,\dots,m-1\}$ for some $m \in \N$, but the fact that we are dealing with two-block Springer fibers implies that $m=2$. When $m=2$ affine forests are the same as affine crossingless matchings.  

In order to directly extend our construction of exotic t-structures to arbitrary $\Gr^{\ul}$ we need an affine braid group action on such varieties. Although such an action is not known in general it is possible to define (building on some recent results of the first author) in the special case $\ul = (k_1 \omega_1, \dots, k_n \omega_1)$. This will allow us to also define exotic t-structures for these spaces in an upcoming paper. 

One of the interesting results from \cite{BM} is Property ($\star$) from Section 5.3.2. This is a vanishing statement for certain $\Ext$s which involves the internal grading $\{\cdot\}$ on $D^G(\tsN)$. This grading, which is induced by the natural $\C^\times$ action on $\tsN$, corresponds under the equivalence $\Phi$ from \eqref{eq:Phi} to the grading by Frobenius weights. Bezrukavnikov and Mirkovi\'c then apply the Purity Theorem of \cite{BBD} to obtain this vanishing. A similar vanishing result, namely the Positivity Lemma \cite[Lemma 9]{B}, is one of the main tools in that paper. The proof of this Lemma also uses \eqref{eq:Phi} and the Purity Theorem. It would be interesting to obtain these result directly (and in greater generality) from the action of $U_q(L\gl_n)$ without having to turn to the equivalence $\Phi$ or the Purity Theorem. This would involve defining weight structures (a.k.a.~co-t-structures) in the sense of \cite{Bon,Pa}. 

Finally, perverse exotic sheaves on the convolution varieties $Y(\uk)$ should be thought of analogues of perverse coherent sheaves on the affine Grassmannian (the coherent Satake category). Although there are some concrete statements one can make (for example, the pushforward of a perverse exotic sheaf to the affine Grassmannian is perverse coherent) the relationship of perverse exotic sheaves to the coherent Satake category is something that begs further study. 

\subsection*{Acknowledgements} We were supported by NSERC through a discovery/ac\-cel\-er\-a\-tor grant. S.C.~would like to thank Ivan Mirkovi\'c for sharing, back in 2009, a preliminary version of \cite{BM} which turned out to be inspiring (and a bit amusing).

\section{Preliminaries}

We will always work over the base field $\C$ -- although much (if not everything) of what we do should work in arbitrary characteristic. 

For $n \ge 1$ we denote by $[n]$ the quantum integer $q^{n-1} + q^{n-3} + \dots + q^{-n+3} + q^{-n+1}$. By convention $[-n] = -[n]$. If $f = \sum_a f_a q^a \in \N[q,q^{-1}]$ and $\A$ is an object inside a graded category $\K$ we write $\bigoplus_f \A$ for the direct sum $\bigoplus_{a \in \bZ} \A^{\oplus f_a} \la a \ra$. For example, 
$$\bigoplus_{[n]} \A = \bigoplus_{k=0}^{n-1} \A \la n-1-2k \ra.$$

\subsection{Categorical actions}\label{sec:action}
By a graded 2-category $\K$ we mean a 2-category whose 1-morphisms are equipped with an auto-equivalence $\la 1 \ra$. $\K$ is idempotent complete if for any 2-morphism $\alpha$ with $\alpha^2=\alpha$ the image of $\alpha$ is contained in $\K$. 

We now recall the notion of a $(L\gl_n,\theta)$ action based on \cite{C3}. We will work with the weight lattice of $\gl_n$, which we can identify with $\bZ^n$ equipped with the standard bilinear form $\la \cdot,\cdot \ra: \bZ^n \times \bZ^n \rightarrow \bZ$. Elements of $\bZ^n$ are denoted $\uk = (k_1, \dots, k_n)$. The affine root sublattice in this case is generated by $\alpha_i = (0, \dots, -1,1,\dots,0)$ for $i \in I := \{1,\dots,n-1\}$ where the $-1$ appears in position $i$. We write $\alpha_0 = (1,0,\dots,0,-1)$ and $\hI := I \cup \{0\}$. We also shorten $\la \alpha_i, \alpha_j \ra$ as $\la i,j \ra$ for $i,j \in \hI$. 

The type of $(L\gl_n,\t)$ actions we consider consist of a target graded, additive, $\C$-linear idempotent complete 2-category $\K$ where the objects $\K(\uk)$ are indexed by $\uk \in \bZ^n$ and equipped with:
\begin{enumerate}
\item 1-morphisms: $\E_i \1_\uk = \1_{\uk+\alpha_i} \E_i$ and $\F_i \1_{\uk+\alpha_i} = \1_\uk \F_i$ where $i \in \hI$ and $\1_\uk$ is the identity 1-morphism of $\K(\uk)$, 
\item 2-morphisms: for each $\uk \in \bZ^n$, a linear map $\spn\{\alpha_i: i \in \hI \} \rightarrow \End^2(\1_\uk)$.
\end{enumerate}

On this data we impose the following conditions.

\begin{enumerate}
\item \label{co:hom1} $\Hom(\1_\uk, \1_\uk \la l \ra)$ is zero if $l < 0$ and one-dimensional if $l=0$ and $\1_\uk \ne 0$. Moreover, the space of maps between any two 1-morphisms is finite dimensional.
\item \label{co:adj} $\E_i$ and $\F_i$ are left and right adjoints of each other up to specified shifts. More precisely
    \begin{enumerate}
    \item $(\E_i \1_\uk)^R \cong \1_\uk \F_i \la \la \uk, \alpha_i \ra + 1 \ra$
    \item $(\E_i \1_\uk)^L \cong \1_\uk \F_i \la -  \la \uk, \alpha_i \ra  -1 \ra$.
    \end{enumerate}

\item \label{co:EF} We have
    \begin{align*}
    \E_i \F_i \1_\uk &\cong \F_i \E_i \1_\uk \bigoplus_{[ \la \uk, \alpha_i \ra ]} \1_\uk \ \ \text{ if }  \la \uk, \alpha_i \ra  \ge 0 \\
    \F_i \E_i \1_\uk &\cong \E_i \F_i \1_\uk \bigoplus_{[- \la \uk, \alpha_i \ra ]} \1_\uk \ \ \text{ if }  \la \uk, \alpha_i \ra  \le 0
    \end{align*}

\item \label{co:EiFj} If $i \ne j \in I$ then $\F_j \E_i \1_\uk \cong \E_i \F_j \1_\uk$.

\item \label{co:theta} 
    The composition $\E_i \E_i$ decomposes as $\E_i^{(2)} \la -1 \ra \oplus \E_i^{(2)} \la 1 \ra$ for some 1-morphism $\E_i^{(2)}$. Moreover, if $\theta \in \spn \{\alpha_i: i \in \hI\}$ where $\la \theta, \alpha_i \ra \ne 0$ (resp. $\la \theta, \alpha_i \ra = 0$) then $I \t I \in \End^2(\E_i \1_\uk \E_i)$ induces a nonzero map (resp.\ the zero map) between the summands $\E_i^{(2)} \la 1 \ra$ on either side.


\item \label{co:vanish1} If $\alpha = \alpha_i$ or $\alpha = \alpha_i + \alpha_j$ for some $i,j \in \hI$ with $\la i,j \ra = -1$ then $\1_{\uk+r \alpha} = 0$ for $r \gg 0$ or $r \ll 0$.

\item \label{co:new} Suppose $i \ne j \in \hI$. If $\1_{\uk+\alpha_i}$ and $\1_{\uk+\alpha_j}$ are nonzero then $\1_\uk$ and $\1_{\uk+\alpha_i+\alpha_j}$ are also nonzero.
\end{enumerate}

In this paper we will always assume that $\K(\uk)$ is zero (i.e. $\1_\uk = 0$) if some $k_i < 0$. Thus condition (\ref{co:vanish1}) will be obvious. Condition (\ref{co:new}) will likewise be obvious. 

\begin{Remark}
Since $\K$ is graded a $(L\gl_n,\t)$ action induces an action of $U_q(L\gl_n)$ at the level of Grothendieck groups. We chose to write $(L\gl_n,\theta)$ rather than perhaps $(U_q(L\gl_n),\theta)$ in order to simplify notation. 

In the discussion above this action of $U_q(L\gl_n)$ appears in its Kac--Moody (KM) presentation. This in contrast to the action constructed in \cite{CK4} which appears in its Drinfeld--Jimbo (DJ) realization. When it is not clear from context and we need to differentiate between the two realizations we will write $(L\gl_n,\t)_{KM}$ or $(L\gl_n,\t)_{DJ}$ as the case may be (cf. Section \ref{sec:examples}). 
\end{Remark}

\subsection{Triangulated categories and convolution}\label{sec:convolution}

An additive 2-category $\K$, such as the ones from the last section, is said to be triangulated if for any two objects $\uk, \uk' \in \K$ the category $\Hom(\uk,\uk')$ is triangulated. For example, the homotopy 2-category $\Kom(\K)$ of any additive 2-category $\K$ is triangulated for the same reasons that the homotopy category of an additive category is triangulated. 

Consider now a triangulated category $\sC$ and a complex of objects 
$$(A_\bullet, d_\bullet) = [A_n \xrightarrow{d_n} \dotsb \xrightarrow{d_2} A_1 \xrightarrow{d_1} A_0]$$
with $A_i$ in cohomological degree $-i$. In the literature there are various conventions about how to define the convolution of this complex as an iterated cone. We will convolve from the right as follows.

The convolution of $(A_\bullet, d)$ is any object $B$ such that there exist
\begin{enumerate}
    \item objects $A_0 = B_0, B_1, \dotsc, B_n = B$ and
    \item morphism $f_i\colon A_{i}[-i] \to B_{i-1}$, $g_i\colon B_{i-1} \to B_{i}$ and $h_i\colon B_i \to A_i[-i+1]$ (with $h_0 = \id[1]$)
\end{enumerate}
such that
\[
    A_{i}[-i] \xrightarrow{f_i} B_{i-1} \xrightarrow{g_i} B_i \xrightarrow{h_{i}}
\]
is a distinguished triangle for each $i$ and $h_{i-1} \circ f_i = d_i$. Note that in general convolutions may not exist and they may not be unique. But under some reasonable conditions they both exist and are unique, cf.~\cite[Proposition~8.3]{CK1}.

As a particular case one finds that the convolution of a two term complex $[A \xrightarrow{f} B]$ with $B$ in degree zero is just $\Cone(f)$. On the other hand, if the $A_i$ belong to the heart of a t-structure then the convolution is unique and isomorphic (in the derived category) to the class of the complex 
$$A_n \xrightarrow{d_n} \dotsb \xrightarrow{d_2} A_1 \xrightarrow{d_1} A_0$$
where, as before, $A_0$ is in degree zero.

\subsection{t-structures}

Suppose $\sC$ and $\sD$ are triangulated categories. We say that a set of objects $\{X_s : s \in S \}$ {\it weakly generates} $\sC$ if $\Hom(X_s[n],Y)=0$ for all $s \in S$ and $n \in \bZ$ implies $Y=0$. A functor $\Phi\colon \sC \rightarrow \sD$ is {\it conservative} if $\Phi(X)=0$ implies $X=0$. 

Recall that a {\it t-structure} on $\sC$ consists of two full subcategories $\sC^{\le 0}$, $\sC^{\ge 0}$ such that:
\begin{itemize}
    \item $\sC^{\le -1} \subseteq \sC^{\le 0}$ and $\sC^{\ge 1} \subseteq \sC^{\ge 0}$, where $\sC^{\le n} \coloneqq \sC^{\le 0}[-n]$ and $\sC^{\ge n} \coloneqq \sC^{\ge 0}[-n]$;
    \item $\Hom(\sC^{\le 0}, \sC^{\ge 1}) = 0$;
    \item Every object $X$ can be embedded in a distinguished triangle $X^{\le 0} \to X \to X^{\ge 1}$ with $X^{\le 0}\in \sC^{\le 0}$, $X^{\ge 1}\in \sC^{\ge 1}$.
\end{itemize}
From these axioms it follows that the inclusion $\sC^{\le n} \hookrightarrow \sC$ has a right adjoint, denoted $\tau^{\le n}\colon \sC \to \sC^{\le n}$, while $\sC^{\ge n} \hookrightarrow \sC$ has a left adjoint denoted $\tau^{\ge n}\colon \sC \to \sC^{\ge n}$.
Moreover the heart of the t-structure, $\sC^\heartsuit = \sC^{\le 0} \cap \sC^{\ge 0}$ is an abelian category and the functor \[ H^n = \tau^{\le n} \circ \tau^{\ge n} = \tau^{\ge n} \circ \tau^{\le n}\colon \sC \to \sC^{\heartsuit} \] is a cohomological functor.

The following result of Polishchuk will be our main tool for defining t-structures. 

\begin{Theorem}[{\cite[Theorem~2.1.2]{Po}}] \label{thm:t-structure:right-exact-induces}
Let $\Phi \colon \tsC \to \tsD$ be a functor of cocomplete triangulated categories which commutes with all small coproducts and that admits a left adjoint functor $\Phi^L \colon \tsD \to \tsC$. Let $\sC \subseteq \tsC$ and $\sD \subseteq \tsD$ be full subcategories such that $\Phi(\sC) \subseteq \sD$, $\Phi^L(\sD) \subseteq \sC$ and $\Phi(X) \in \sD \Rightarrow X \in \sC$ for any $X \in \tsC$.
    
Assume that $\sD$ has a t-structure $(\sD^{\le 0},\sD^{\ge 0})$ such that $\Phi \circ \Phi^L \colon \sD \to  \sD$ is right t-exact.  Then there exists a (unique) t-structure on $\sC$ with
$$\sC^{\ge 0} = \bigl\{ X \in \sC : \Phi(X) \in \sD^{\ge 0} \bigr\}.$$ 
Moreover the functor $\Phi$ is t-exact with respect to these t-structures.
\end{Theorem}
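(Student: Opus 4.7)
The plan is to construct $\sC^{\le 0}$ as the smallest full subcategory of $\tsC$ containing $\Phi^L(\sD^{\le 0})$ that is closed under small coproducts, extensions, and non-negative shifts. Since $\Phi^L(\sD) \subseteq \sC$ and since the reflecting hypothesis $\Phi(X) \in \sD \Rightarrow X \in \sC$, combined with the closure of $\sD^{\le 0}$ under the corresponding operations, ensures that each object built this way has $\Phi$-image in $\sD$, the subcategory actually sits inside $\sC$. The goal is then to show that $(\sC^{\le 0}, \sC^{\ge 0})$ with $\sC^{\ge 0}$ as stated is a t-structure on $\sC$ and that $\Phi$ is t-exact.

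The Hom-vanishing axiom reduces by adjunction to the vanishing
\[
\Hom(\Phi^L(D)[k], Y) \;=\; \Hom(D[k], \Phi(Y)) \;=\; 0
\]
for $D \in \sD^{\le 0}$, $k \ge 0$ and $Y \in \sC^{\ge 1}$ (where $\Phi(Y) \in \sD^{\ge 1}$), which is immediate from the t-structure axioms on $\sD$. The vanishing then propagates to arbitrary $X \in \sC^{\le 0}$ since $\{X : \Hom(X,Y) = 0 \text{ for all } Y \in \sC^{\ge 1}\}$ is closed under coproducts, shifts and extensions. Right t-exactness of $\Phi$ is verified the same way: the generating case $\Phi\Phi^L(\sD^{\le 0}) \subseteq \sD^{\le 0}$ is exactly the hypothesis, and it extends to $\sC^{\le 0}$ because $\Phi$ commutes with small coproducts, shifts and cones, while $\sD^{\le 0}$ is closed under each of these. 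Left t-exactness is built into the definition of $\sC^{\ge 0}$.

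The main obstacle is producing the truncation triangle $X^{\le 0} \to X \to X^{\ge 1}$ for an arbitrary $X \in \sC$. Here I would use the cocompleteness of $\tsC$: since $\sC^{\le 0}$ is closed under small coproducts and $\tsC$ admits them, a Brown-representability / Bousfield-localization argument yields a right adjoint to the inclusion $\sC^{\le 0} \hookrightarrow \tsC$, whose counit supplies the desired triangle. Concretely this can be realized iteratively, starting from the canonical adjoint map $\Phi^L \tau^{\le 0}_{\sD}\Phi(X) \to X$ (arising from $\tau^{\le 0}_{\sD}\Phi(X) \to \Phi(X)$), taking its cone, repeating the construction on the cone, and assembling via homotopy colimits at limit stages; the right t-exactness of $\Phi\Phi^L$ guarantees that the $\Phi$-image of the remainder lies in strictly higher $\sD^{\ge n}$ after each step, which is the convergence mechanism. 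The reflecting hypothesis keeps the resulting triangle inside $\sC$, and uniqueness of the t-structure is automatic once the coaisle has been prescribed.
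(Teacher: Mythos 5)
The paper cites this result from Polishchuk \cite[Theorem~2.1.2]{Po} without giving a proof, so there is no in-text argument to compare against; I can only assess your proposal on its own terms. The overall strategy (generate an aisle from $\Phi^L(\sD^{\le 0})$, check orthogonality by adjunction, produce truncations by a Bousfield-type argument) is a reasonable shape for such a proof, but the step that actually produces truncation triangles contains a genuine gap, and it is precisely the hard step.

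First, your construction of $\sC^{\le 0}$ as the closure of $\Phi^L(\sD^{\le 0})$ under small coproducts inside $\tsC$ does not land in $\sC$. A small coproduct of objects of $\sC$ is taken in $\tsC$, and $\Phi$ of that coproduct is a coproduct in $\tsD$, which will typically escape $\sD$ (in the paper's applications $\sD = D(X)$ is a bounded derived category of coherent sheaves, and an infinite coproduct of such objects is neither bounded nor coherent). So the sentence claiming the reflecting hypothesis keeps this subcategory inside $\sC$ is not justified: you would need $\Phi$ of the generated object to be in $\sD$, and that fails for coproducts. What you actually get is a candidate aisle in $\tsC$, not in $\sC$, and the passage back to $\sC$ is exactly what needs an argument.

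Second, the claimed convergence mechanism in the iterative construction is false. Take $A_1 = \Phi^L\tau^{\le 0}_{\sD}\Phi(X) \to X$ and let $Y_1$ be the cone. Applying $\Phi$ and using that $\Phi(A_1) \to \Phi(X)$ is refined by the unit (so $\tau^{\le 0}\Phi(X) \to \Phi\Phi^L\tau^{\le 0}\Phi(X) \to \Phi(X)$ composes to the truncation map), the long exact sequence shows $H^0(\Phi Y_1) = 0$ and $H^n(\Phi Y_1) \cong H^n(\Phi X)$ for $n \ge 1$, but for $n \le -1$ one finds $H^n(\Phi Y_1) \cong \ker\bigl(H^{n+1}(\Phi\Phi^L\tau^{\le 0}\Phi X) \to H^{n+1}(\Phi X)\bigr)$, which has no reason to vanish and can even extend further into negative degrees than $\Phi X$ itself did (right t-exactness bounds $\Phi\Phi^L$ above, not below). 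So $\Phi Y_1$ need not lie in $\sD^{\ge 1}$, and iterating does not push the remainder into "strictly higher $\sD^{\ge n}$": one can check that $H^n(\Phi Y_k) = 0$ only on a finite window $-(k-1) \le n \le 0$, with possibly non-zero cohomology below that window at every finite stage. Hence passing to the homotopy colimit $Y_\infty$ is genuinely required, but then $\Phi(Y_\infty)$ is a filtered homotopy colimit of objects of $\sD$ in $\tsD$, and there is no argument offered for why this colimit lies in $\sD$. That is exactly where the reflecting hypothesis would need to be invoked and exactly where it is not applicable without further input (e.g.\ a non-degenerate ambient t-structure on $\tsD$ restricting to the one on $\sD$, plus coherence of the cohomologies, neither of which is in the stated hypotheses). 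In short, the heart of the proof --- showing the truncation triangle stays inside $\sC$ --- is asserted rather than proved.

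A smaller point worth flagging: once you have the truncation triangle, you still need to identify the resulting co-aisle with $\{X \in \sC : \Phi(X) \in \sD^{\ge 0}\}$ as stated in the theorem. The inclusion "$\supseteq$" follows since for $Y$ with $\Phi(Y) \in \sD^{\ge 1}$ and any generator $\Phi^L(D)$ ($D \in \sD^{\le 0}$), adjunction gives $\Hom(\Phi^L(D), Y) = \Hom(D, \Phi Y) = 0$; the reverse inclusion requires the same adjunction calculation applied to a generic $Y$ in the right orthogonal. You gesture at this with "left t-exactness is built into the definition", but this identification deserves an explicit sentence, since without it the $\sC^{\ge 0}$ you produce might be strictly smaller than the one in the statement.
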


From hereon we will always assume that whenever we want to apply Theorem \ref{thm:t-structure:right-exact-induces} to a functor of triangulated categories $\phi\colon \sC \rightarrow \sD$ we can extend it to a functor $\Phi\colon \tsC \rightarrow \tsD$ between cocomplete triangulated categories which satisfies all the conditions in the statement of Theorem \ref{thm:t-structure:right-exact-induces}.

Let us explain why we can do this. In all the examples we consider the categories $\sC$ and $\sD$ will be bounded derived categories $D(X)$ and $D(Y)$ of coherent sheaves on smooth varieties $X,Y$. We take $\tsC$ and $\tsD$ to be the corresponding (cocomplete) unbounded derived categories of quasi-coherent sheaves $\Dqcoh{X}$ and $\Dqcoh{Y}$. Any functor $\phi\colon \sC \to \sD$ will be given by a kernel and hence naturally extends to quasi-coherent sheaves. Furthermore, all such functors will have right adjoints and hence commute with small coproducts by the Adjoint Functor Theorem. Finally, $\phi$ will always be conservative and by Lemma~\ref{lem:conservative_and_compactness} below this means that $\Phi(X) \in \sD \Rightarrow X \in \sC$.

\begin{Lemma}\label{lem:conservative_and_compactness}
Let $X$ and $Y$ be smooth varieties and $\Phi\colon \Dqcoh{X}\to \Dqcoh{Y}$ a functor given by a kernel in $D(X \times Y)$. If the restriction of $\Phi$ to $D(X)$ is conservative then for any $\sF \in \Dqcoh{X}$, $\Phi(\sF) \in D(Y)$ if and only if $\sF \in D(X)$.
\end{Lemma}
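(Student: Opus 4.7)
The plan is to prove the two implications separately. The forward direction---$\sF \in D(X)$ implies $\Phi(\sF) \in D(Y)$---is standard: the kernel $\sP \in D(X\times Y)$ is bounded coherent, and on smooth varieties the operations defining $\Phi$ (flat pullback, derived tensor with the perfect complex $\sP$, and proper pushforward) all preserve boundedness and coherence. For the converse, I would first record the basic properties of $\Phi$ that are needed: it has finite cohomological amplitude $[a,b]$ on $\Dqcoh{X}$ (because $\sP$ is bounded, $\pi_X^*$ is exact, and $R\pi_{Y,*}$ has bounded cohomological dimension on quasi-coherent sheaves over a Noetherian scheme); it commutes with small coproducts and filtered colimits; and its right adjoint $\Phi^R$ is again a kernel functor with the same formal properties. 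Combined with the characterization $\sF \in D(X) \Leftrightarrow \sF$ is a compact object of $\Dqcoh{X}$ (compact equals perfect on a smooth variety, by Bondal--Van den Bergh / Neeman), it suffices to show that $\sF$ has bounded cohomology and that each cohomology sheaf is coherent.

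For boundedness, I would argue as follows. If $H^n(\sF) \ne 0$, Noetherianity of $X$ provides a nonzero coherent subsheaf $\sH \hookrightarrow H^n(\sF)$, and the canonical composition $\sH[-n] \hookrightarrow H^n(\sF)[-n] \to \tau^{\ge n}\sF$ lifts uniquely to a morphism $\sH[-n] \to \sF$ (the obstruction in $\Hom(\sH[-n], \tau^{<n}\sF[1])$ vanishes by t-structure orthogonality). Applying $\Phi$ gives $\Phi(\sH)[-n] \to \Phi(\sF)$, and $\Phi(\sH) \ne 0$ in $D(Y)$ by conservativity. If $[c,d]$ denotes the amplitude of $\Phi(\sF)$, then for $n$ outside the window $[c-b, d-a]$ this morphism has source and target in disjoint regions of the standard t-structure, forcing it to vanish. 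Analysing the cone of $\sH[-n] \to \sF$ and pairing the above with the analogous argument for the bottom cohomology (using $\Phi^R$ in place of $\Phi$) then produces a contradiction if $\sF$ has cohomology in degrees of arbitrarily large absolute value.

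For coherence, once $\sF$ is known bounded, the finite amplitude of $\Phi$ yields a convergent hypercohomology spectral sequence $E_2^{p,q} = H^p(\Phi(H^q(\sF))) \Rightarrow H^{p+q}(\Phi(\sF))$. Writing each $H^n(\sF) = \colim_\alpha \sH_\alpha$ as the filtered colimit of its coherent subsheaves, $\Phi$ commutes with this colimit, and left exactness of the bottom cohomology functor $H^a \circ \Phi$ makes the transition maps $H^a(\Phi(\sH_\alpha)) \to H^a(\Phi(\sH_\beta))$ injective. Since every $H^i(\Phi(\sF))$ is coherent and hence Noetherian, the resulting ascending filtration on the relevant cohomology of $\Phi(\sF)$ must stabilize; combined with conservativity applied to the quotients $\sH_\beta/\sH_\alpha$, this forces the system $\{\sH_\alpha\}$ itself to stabilize, so $H^n(\sF)$ is coherent. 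The main obstacle throughout is that conservativity of $\Phi$ on $D(X)$ does not imply faithfulness on Hom-groups---so $\Phi$ may send the nonzero morphism $\sH[-n] \to \sF$ to zero even when $\sH \in D(X)$. The argument is therefore forced to track entire distinguished triangles and to exploit the finite cohomological amplitude of $\Phi$ to constrain how such cancellations can occur.
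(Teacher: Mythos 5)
Your proposal is a genuinely different, bare-hands route from the paper's, but it contains a concrete error at the crucial step and the gap you yourself flag is not filled.

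The error is in the boundedness argument. You assert that the composition $\sH[-n]\hookrightarrow H^n(\sF)[-n]\to \tau^{\ge n}\sF$ lifts to a morphism $\sH[-n]\to\sF$ because the obstruction in $\Hom(\sH[-n],\,(\tau^{<n}\sF)[1])$ vanishes by t-structure orthogonality. But $\sH[-n]$ is concentrated in cohomological degree $n$ while $(\tau^{<n}\sF)[1]$ lies in degrees $\le n-2$; the t-structure axiom kills $\Hom$ from \emph{low} cohomological degrees to \emph{high} ones, not the other way around, so this $\Hom$-group has no reason to vanish. In fact there is in general no canonical nonzero morphism from (a subsheaf of) $H^n(\sF)[-n]$ into $\sF$ unless $n$ is the minimal degree in which $\sF$ has cohomology --- and establishing that such a minimal degree exists is precisely what the boundedness step is trying to prove, so the argument is circular. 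Even granting such a morphism $f$, you correctly observe that conservativity does not give faithfulness, so $\Phi(f)=0$ yields no contradiction; the remedy you gesture at (tracking whole distinguished triangles) is not carried out, and it is not clear how it would close the loop. The coherence step has a similar softness: the $E_2$-terms of the spectral sequence are only subquotients of $H^\bullet(\Phi\sF)$, so Noetherianity of the abutment does not directly force the filtration $\{H^a\Phi(\sH_\alpha)\}$ to stabilize, and there is no mechanism produced that would make $\Phi(\sH_\beta/\sH_\alpha)$ vanish so that conservativity can be invoked.

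The paper avoids all of this cohomological bookkeeping by working in the $\infty$-categorical enhancement and using that for smooth $X$ one has $D(X)=\mathrm{Perf}(X)=\{$compact objects of $\Dqcoh{X}\}$ with $\Dqcoh{X}=\mathrm{Ind}(D(X))$. The two-step argument is: (i) upgrade conservativity of $\Phi$ on $D(X)$ to conservativity on all of $\Dqcoh{X}$, by noting that $\Phi^L(D(Y))$ weakly generates $D(X)$ and hence, after ind-completion, all of $\Dqcoh{X}$; (ii) write $\sF=\colim_\alpha\sF_\alpha$ as a filtered colimit of compact objects, use that $\Phi$ preserves filtered colimits, and deduce from compactness of $\Phi(\sF)$ and conservativity on $\Dqcoh{X}$ that $\sF$ is already compact, i.e.\ in $D(X)$. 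What your approach is missing is exactly the propagation of conservativity in step (i): without it you have no way to apply conservativity to the unbounded, non-coherent cone objects (such as $\Cone(\sF_\beta\to\sF)$) that inevitably arise, which is why you are forced into the amplitude estimates that then run aground on the orthogonality error above.
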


\begin{proof}
It suffices to show the statement for the infinity-categorical enhancements of the corresponding categories (the original statement then follows by taking homotopy categories). If $X$ is smooth, then $D(X) = \mathrm{Perf}(X)$ can be identified with the subcategory of compact objects of $\Dqcoh{X}$, and $\Dqcoh{X}$ is the ind-completion of $D(X)$ as stable $\infty$-categories (see for instance \cite{BFN}).

Let us first show that $\Phi$ is conservative on all of $\Dqcoh{X}$. Equivalently we show that $\Phi^L(\Dqcoh{Y})$ weakly generates $\Dqcoh{X}$. Suppose $\sF \in \Dqcoh{X}$ and $\Hom(\Phi^L(\sG), \sF) = 0$ for all $\sG \in \Dqcoh{Y}$. We know that $\Phi^L(D(Y))$ weakly generates $D(X)$. Thus the ind-completion of $\Phi^L(D(Y))$ contains $D(X)$ and hence all of $\Dqcoh{X}$. Thus $\Phi^L(D(Y))$ weakly generates $\Dqcoh{X}$.

Suppose now $\sF \in \Dqcoh{X}$ with $\Phi(\sF) \in D(Y)$. We can write $\sF$ as a filtered (homotopy) colimit of compact objects, $\sF = \colim_\alpha \sF_\alpha$. If $\Phi(\sF) = \colim_\alpha \Phi(\sF_\alpha)$ is compact then there exists an index $\beta$ such that $\Phi(\sF_\beta) \to \Phi(\sF)$ is an isomorphism. Since $\Phi$ is conservative this implies that $\sF_\beta \to \sF$ is an isomorphism and hence that $\sF$ is compact, i.e.~is an element of $D(X)$.
\end{proof}

\begin{Remark}
The proof of Lemma~\ref{lem:conservative_and_compactness} makes essential use of the fact that if $X$ is smooth then $D(X)$ consists exactly of the compact objects on $\Dqcoh{X}$. If the underlying spaces are not smooth, then this is no longer true. On the other hand, $D(X)$ is always the compact objects in the category of ind-coherent sheaves. Thus for general $X$ and $Y$ one can simply replace quasi-coherent sheaves by ind-coherent sheaves in the above arguments.
\end{Remark}

We finish this section with some technical results that we need about t-structures.

\begin{Lemma}[{\cite[Lemma~1.1.1]{Po}}]\label{lem:t-structure:conservative}%
Let $\Phi \colon \sC_1 \to \sC_2$ be a conservative t-exact functor between triangulated categories. Then,
    \begin{align*}
        \sC_1^{\le 0} & = \bigl\{ X \in \sC_1 : \Phi(X) \in \sC_2^{\le 0} \bigr\},\\
        \sC_1^{\ge 0} & = \bigl\{ X \in \sC_1 : \Phi(X) \in \sC_2^{\ge 0} \bigr\}.
    \end{align*}
\end{Lemma}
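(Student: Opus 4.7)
The inclusion $\subseteq$ in each equality is immediate: if $X \in \sC_1^{\le 0}$ (resp.\ $\sC_1^{\ge 0}$), then t-exactness of $\Phi$ directly gives $\Phi(X) \in \sC_2^{\le 0}$ (resp.\ $\sC_2^{\ge 0}$). So the content of the lemma is the reverse inclusion, for which the idea is to show that the truncation that would obstruct $X$ from lying in the claimed subcategory vanishes, using conservativity to transport the vanishing from $\sC_2$ back to $\sC_1$.

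For the $\sC_1^{\le 0}$ statement, suppose $\Phi(X) \in \sC_2^{\le 0}$. I would form the canonical truncation triangle $\tau^{\le 0} X \to X \to \tau^{\ge 1} X \xrightarrow{+1}$ in $\sC_1$ and apply $\Phi$ to obtain a distinguished triangle in $\sC_2$. By t-exactness, $\Phi(\tau^{\le 0} X) \in \sC_2^{\le 0}$ and $\Phi(\tau^{\ge 1} X) \in \sC_2^{\ge 1}$. Rotating, I get a distinguished triangle $\Phi(X) \to \Phi(\tau^{\ge 1} X) \to \Phi(\tau^{\le 0} X)[1]$. The outer two objects lie in $\sC_2^{\le 0}$ (the second because $\sC_2^{\le -1} \subseteq \sC_2^{\le 0}$), and since $\sC_2^{\le 0}$ is closed under extensions, the middle object $\Phi(\tau^{\ge 1} X)$ is in $\sC_2^{\le 0} \cap \sC_2^{\ge 1} = 0$. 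Conservativity of $\Phi$ then forces $\tau^{\ge 1} X = 0$, whence $X \cong \tau^{\le 0} X \in \sC_1^{\le 0}$.

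The $\sC_1^{\ge 0}$ statement is entirely dual: starting from $\Phi(X) \in \sC_2^{\ge 0}$, apply $\Phi$ to the triangle $\tau^{\le -1} X \to X \to \tau^{\ge 0} X$, and argue that $\Phi(\tau^{\le -1} X)$ must lie in $\sC_2^{\ge 0} \cap \sC_2^{\le -1} = 0$, hence $\tau^{\le -1} X = 0$ by conservativity. There is no genuine obstacle here; the only mild point to be careful about is the stability of the aisles under extensions and the direction of the shifts, but these are standard properties of t-structures. The key ingredient is really just the two-out-of-three closure of each aisle under the distinguished triangles coming from the truncation, combined with conservativity.
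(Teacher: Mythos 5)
Your argument is correct. The paper itself gives no proof — it simply cites \cite[Lemma~1.1.1]{Po} — but your reasoning is exactly the standard one and matches Polishchuk's: the forward inclusions are trivial from t-exactness, and for the reverse inclusions you apply $\Phi$ to the truncation triangle, use t-exactness together with closure of the aisles under extensions to deduce that the "wrong" truncation lands in $\sC_2^{\le 0}\cap\sC_2^{\ge 1}=0$ (resp.\ $\sC_2^{\ge 0}\cap\sC_2^{\le -1}=0$), and then invoke conservativity to kill it in $\sC_1$. No gaps.
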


We recall that if $(\Phi, \Psi)$ is a pair of adjoint functors, then $\Phi$ is right t-exact if and only if $\Psi$ is left t-exact. In particular an equivalence of categories is t-exact if and only if its inverse is t-exact.

\begin{Lemma}\label{lem:t-structure_and_direct_sums}
Let $(\sC^{\le 0}, \sC^{\ge 0})$ be a t-structure on $\sC$ and suppose $X,Y \in \sC$. If $X \oplus Y \in \sC^{\le 0}$ then already $X, Y \in \sC^{\le 0}$ and likewise if $X \oplus Y \in  \sC^{\ge 0}$ then $X, Y \in \sC^{\ge 0}$.
\end{Lemma}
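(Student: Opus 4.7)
The plan is to use the standard orthogonality characterization of $\sC^{\le 0}$ in terms of $\sC^{\ge 1}$ (or, dually, truncation functors). Recall that from the t-structure axioms one has
\[
\sC^{\le 0} = \{ X \in \sC : \Hom(X, Z) = 0 \text{ for all } Z \in \sC^{\ge 1} \},
\]
and symmetrically $\sC^{\ge 0} = \{ X \in \sC : \Hom(Z, X) = 0 \text{ for all } Z \in \sC^{\le -1}\}$.

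Given $X \oplus Y \in \sC^{\le 0}$ and any $Z \in \sC^{\ge 1}$, the isomorphism $\Hom(X \oplus Y, Z) \cong \Hom(X, Z) \oplus \Hom(Y, Z) = 0$ forces both $\Hom(X, Z)$ and $\Hom(Y, Z)$ to vanish. Since $Z$ was arbitrary in $\sC^{\ge 1}$, the orthogonality characterization gives $X, Y \in \sC^{\le 0}$. The argument for $\sC^{\ge 0}$ is identical, applying $\Hom(Z, -)$ instead.

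Alternatively, since the truncation functors $\tau^{\ge 1}$ and $\tau^{\le -1}$ are additive (being adjoint to inclusion functors), they commute with finite direct sums. From $X \oplus Y \in \sC^{\le 0}$ one gets $0 = \tau^{\ge 1}(X \oplus Y) = \tau^{\ge 1}(X) \oplus \tau^{\ge 1}(Y)$, and since a direct sum vanishes only if both summands do, we conclude $\tau^{\ge 1}(X) = \tau^{\ge 1}(Y) = 0$, i.e.\ $X, Y \in \sC^{\le 0}$.

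There is no real obstacle here; the statement is essentially a direct consequence of the additivity built into the definition of a t-structure. The only point to be slightly careful about is verifying that $A \oplus B = 0$ implies $A = B = 0$ in a triangulated (in fact, any additive) category, which is immediate from the existence of the projection morphisms.
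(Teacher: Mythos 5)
Your proof is correct, and in fact you give two valid arguments. Your primary argument goes through the orthogonality characterization $\sC^{\le 0} = {}^\perp\sC^{\ge 1}$ (which the paper itself invokes elsewhere, in Lemma~\ref{lem:t-structures:inclusion-implies-identical}), while your alternative argument via additivity of $\tau^{\ge 1}$ is essentially what the paper does: the paper forms the split triangle $X \to X\oplus Y \to Y \xrightarrow{0}$ and observes that applying $\tau^{>0}$ yields the split triangle $\tau^{>0}X \to 0 \to \tau^{>0}Y \xrightarrow{0}$, forcing both truncations to vanish. Strictly speaking, truncation does not preserve arbitrary distinguished triangles, but it does preserve split ones because it is an additive functor (being adjoint to a fully faithful inclusion), which is exactly the point your cleaner phrasing makes explicit. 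The orthogonality route has the small advantage of avoiding any appeal to additivity of $\tau^{\ge 1}$ and instead reducing immediately to the additivity of $\Hom$, which is built into the definition of a triangulated (indeed any additive) category; both approaches are equally elementary in practice.
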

\begin{proof}
We prove the first claim (the second is similar). Consider the distinguished triangle
$$X \to X \oplus Y \to Y \xrightarrow{0}.$$
Applying $\tau^{>0}$ we get the distinguished triangle
$$\tau^{>0}(X) \to 0 \to \tau^{>0}(Y) \xrightarrow{0}$$
which implies that $\tau^{>0}(X) = \tau^{>0}(Y) = 0$.
\end{proof}

\begin{Lemma} \label{lem:t-structure:triangle-with-shifts}%
Let $(\sC^{\le 0}, \sC^{\ge 0})$ be a t-structure on $\sC$ and suppose we have a distinguished triangle 
$$ X \to Y \to Y[k] $$
for some $k > 0$ such that $X \in \sC^{\le 0}$ and $\Hom^i(Y,Y) = 0$ for $i \gg 0$. Then $Y \in \sC^{\le 0}$.
\end{Lemma}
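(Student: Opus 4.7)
The plan is to iterate the map $g\colon Y \to Y[k]$ appearing as the second map of the given triangle, and compare two computations of the cone of the $n$-fold composition --- one built from the octahedral axiom, the other obtained by forcing the composition to vanish using the hypothesis $\Hom^i(Y,Y) = 0$ for $i \gg 0$.

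Set $g_n = g[(n-1)k] \circ \dotsb \circ g[k] \circ g \in \Hom(Y, Y[nk])$. Applying the octahedral axiom iteratively to the pairs $Y \to Y[jk] \to Y[(j+1)k]$ should show that $\Cone(g_n)$ admits a filtration whose successive subquotients are $X[1], X[k+1], \dotsc, X[(n-1)k+1]$. Since $X \in \sC^{\le 0}$ and $k > 0$, every shift $X[jk+1]$ lies in $\sC^{\le -jk-1} \subseteq \sC^{\le 0}$, so the iterated extension $\Cone(g_n)$ itself lies in $\sC^{\le 0}$.

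Next, by the hypothesis $\Hom^i(Y,Y) = 0$ for $i \gg 0$, I can choose $n$ with $\Hom(Y, Y[nk]) = 0$, which forces $g_n = 0$. A distinguished triangle whose first map is zero splits, so $\Cone(g_n) \cong Y[nk] \oplus Y[1]$. Combining this with the first computation and invoking Lemma~\ref{lem:t-structure_and_direct_sums}, each summand must lie in $\sC^{\le 0}$; in particular $Y[1] \in \sC^{\le 0}$, i.e.\ $Y \in \sC^{\le 1}$.

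To upgrade $Y \in \sC^{\le 1}$ to $Y \in \sC^{\le 0}$, I apply the cohomology functor $H^1$ to the original triangle $X \to Y \to Y[k]$. From $X \in \sC^{\le 0}$ we get $H^1(X) = H^2(X) = 0$, and from $Y \in \sC^{\le 1}$ together with $1+k \ge 2$ we get $H^1(Y[k]) = H^{1+k}(Y) = 0$. The long exact sequence then forces $H^1(Y) = 0$, completing the proof.

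The main technical hurdle should be the iterated octahedral computation: verifying inductively that the successive cones really do produce the shifts $X[jk+1]$ and that they all land in $\sC^{\le 0}$. This is exactly where the hypothesis $k > 0$ is used --- if $k$ were non-positive, some of the $X[jk+1]$ could fail to lie in $\sC^{\le 0}$ and the argument would break down. Everything else, including the final cohomology step, is formal once the filtration on $\Cone(g_n)$ is in place.
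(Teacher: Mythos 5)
Your proof is correct and takes essentially the same approach as the paper: iterate the self-map $Y \to Y[k]$, control the cone of the iterate via the octahedral axiom, then use the hypothesis $\Hom^i(Y,Y)=0$ for $i\gg 0$ to force the iterate to vanish and hence the cone to split. The paper iterates by repeated squaring ($\phi^{2^n}$), applying the octahedral axiom once per step and tracking only the single fact $\Cone(\phi^{2^n}) \in \sC^{\le -1}$, whereas you iterate linearly and track a full filtration of $\Cone(g_n)$; this is cosmetic.

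One remark: you are being slightly wasteful in the middle step, which costs you the final cohomology argument. Every subquotient $X[jk+1]$ with $j\ge 0$ in fact lies in $\sC^{\le -jk-1}\subseteq \sC^{\le -1}$ (since $jk+1\ge 1$), so $\Cone(g_n)\in\sC^{\le -1}$, not merely $\sC^{\le 0}$. Then Lemma~\ref{lem:t-structure_and_direct_sums} gives $Y[1]\in\sC^{\le -1}$ directly, i.e.\ $Y\in\sC^{\le 0}$, and the $H^1$-bootstrapping at the end becomes unnecessary. This is exactly how the paper closes the argument.
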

\begin{proof}
Let $\phi\colon Y \to Y[k]$ be the second map in the triangle and consider the composition 
$$Y \xrightarrow{\phi} Y[k] \xrightarrow{\phi[k]} Y[2k].$$ 
From the octahedral axiom we have a triangle 
$$\Cone(\phi) \to \Cone(\phi^2) \to \Cone (\phi[k]).$$
As $\Cone(\phi) = X[1]$ and $\Cone(\phi[k]) = X[k+1]$ are in $\sC^{\le -1}$, so is $\Cone(\phi^2)$. Repeating this argument we find that $\Cone(\phi^{2^n})$ is in $\sC^{\le -1}$ for all $n \ge 1$. But $\phi^{2^n} = 0$ for $n \gg 0$ which means that
$$\Cone(Y \xrightarrow{0} Y[2^nk]) \in \sC^{\le -1}.$$
But the left side is just $Y[1] \oplus Y[2^nk]$. It follows that $Y[1] \in \sC^{\le -1}$ and hence $Y \in \sC^{\le 0}$.
\end{proof}

\begin{Lemma}\label{lem:t-structures:inclusion-implies-identical}
    Suppose we have two t-structures $(\sC^{\le_1 0},\, \sC^{\ge_1 0})$ and $(\sC^{\le_2 0},\, \sC^{\ge_2 0})$ on $\sC$ such that $\sC^{\le_1 0} \subseteq \sC^{\le_2 0}$ and $\sC^{\ge_1 0} \subseteq \sC^{\ge_2 0}$. Then the two t-structures are identical.
\end{Lemma}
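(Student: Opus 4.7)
The plan is to show that each of the two subcategory inclusions is actually an equality by using the truncation triangles for one t-structure together with the closure properties of the aisles of the other. By symmetry (the argument is essentially self-dual) it suffices to prove $\sC^{\le_2 0} \subseteq \sC^{\le_1 0}$.

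First I would fix $X \in \sC^{\le_2 0}$ and consider its truncation triangle with respect to the first t-structure,
\[
\tau^{\le_1 0}(X) \to X \to \tau^{>_1 0}(X) \xrightarrow{+1}.
\]
Rotating gives a distinguished triangle $X \to \tau^{>_1 0}(X) \to \tau^{\le_1 0}(X)[1]$. The outer two terms both lie in $\sC^{\le_2 0}$: indeed $X \in \sC^{\le_2 0}$ by assumption, and $\tau^{\le_1 0}(X)[1]$ lies in $\sC^{\le_1 -1} \subseteq \sC^{\le_1 0} \subseteq \sC^{\le_2 0}$ by the first inclusion of the hypothesis. Since the aisle $\sC^{\le_2 0}$ is closed under extensions, I conclude $\tau^{>_1 0}(X) \in \sC^{\le_2 0}$.

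On the other hand, shifting the second inclusion of the hypothesis gives $\sC^{\ge_1 1} \subseteq \sC^{\ge_2 1}$, so $\tau^{>_1 0}(X) \in \sC^{\ge_1 1} \subseteq \sC^{\ge_2 1}$. Hence $\tau^{>_1 0}(X)$ lies in $\sC^{\le_2 0} \cap \sC^{\ge_2 1}$. The defining orthogonality axiom $\Hom(\sC^{\le_2 0}, \sC^{\ge_2 1}) = 0$ applied to the identity morphism of $\tau^{>_1 0}(X)$ then forces $\tau^{>_1 0}(X) = 0$, so $X \cong \tau^{\le_1 0}(X) \in \sC^{\le_1 0}$.

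The same argument (using a truncation triangle for the first t-structure and the closure of $\sC^{\ge_2 0}$ under extensions) yields $\sC^{\ge_2 0} \subseteq \sC^{\ge_1 0}$, and combining with the given inclusions shows the two t-structures coincide. There is no real obstacle here; the only small point to double-check is the direction of the shifted inclusion $\sC^{\le_1 0}[1] \subseteq \sC^{\le_2 0}$, which is immediate from $\sC^{\le 0}[1] = \sC^{\le -1} \subseteq \sC^{\le 0}$.
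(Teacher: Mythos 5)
Your argument is correct, but it takes a different route than the paper's. The paper's proof is a one-liner using the fact that in any t-structure the co-aisle is the right orthogonal of the shifted aisle, $\sC^{\ge 0} = (\sC^{\le -1})^\perp$: since taking right orthogonals reverses inclusions, $\sC^{\le_1 -1} \subseteq \sC^{\le_2 -1}$ forces $\sC^{\ge_1 0} \supseteq \sC^{\ge_2 0}$, and combining with the hypothesis gives equality; the aisles are handled dually. You instead fix an object $X \in \sC^{\le_2 0}$, run its truncation triangle for the first t-structure, rotate, and use closure of $\sC^{\le_2 0}$ under extensions together with the second inclusion to trap $\tau^{>_1 0}(X)$ in $\sC^{\le_2 0}\cap\sC^{\ge_2 1} = 0$. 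This is a valid and self-contained re-derivation at the level of objects; what the paper's version buys is brevity and the observation that each of the two hypothesized inclusions already forces the reverse of the other (so the hypothesis is somewhat over-determined), whereas your argument uses both inclusions simultaneously in a single pass. Either proof is fine.
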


\begin{proof}
    In any t-structure $\sC^{\ge 0}$ is the right orthogonal of $\sC^{\le -1}$.
    Hence, $\sC^{\le_1 -1} \subseteq \sC^{\le_2 -1}$ implies $\sC^{\ge_1 0} \supseteq \sC^{\ge_2 0}$.
    Similarly one shows that $\sC^{\le_1 0} \supseteq \sC^{\ge_1 0}$.
\end{proof}

\subsection{Induced braid groups actions}\label{sec:braidaction}

One of the applications of categorical $\g$ actions is to the construction of braid group actions. We recall these results following \cite{CK2,C2}. 

Given an $\sl_2$ action generated by $\E,\F$ one considers the Rickard complexes 
\begin{align*}
[\F^{(\l)} \to \E \F^{(\l+1)} \la 1 \ra \to \E^{(2)} \F^{(\l+2)} \la 2 \ra \to \dots] & \quad \text{ if } \l \ge 0, \\
[\E^{(-\l)} \to \F \E^{(-\l+1)} \la 1 \ra \to \F^{(2)} \E^{(-\l+2)} \la 2 \ra \to \dots][\l]\la -\l \ra & \quad \text{ if } \l \le 0,
\end{align*}
where the left hand terms are in cohomological degree zero and where the differentials can be defined using the fact that $\E$ and $\F$ are adjoint to each other. These complexes live in $\Kom(\K)$ and we denote them $[\T]$. On the other hand, as is explained in \cite{CKL2}, if $\K$ is triangulated then $[\T]$ has a unique convolution which we denote $\T$. This allows us to avoid having to pass from $\K$ to $\Kom(\K)$ if $\K$ is already triangulated. 

\begin{Remark}\label{rem:convention}
For technical reasons we use the opposite convention from \cite{CKL2,CK2,CK4}. Namely, the complexes for $\T$ go to the right rather coming from the left. This means that $\T$ in this paper corresponds to $\T^{-1}$ in the old notation. In particular, this means that one needs to take the ``inverse'' of a number of results that we use from \cite{CK4}. 
\end{Remark}

More generally, given a $(L\gl_n,\t)_{KM}$ action we get such a $[\T_i]$ for each $i \in \hI$. Following \cite{CK2} these satisfy the standard braiding relations
\begin{align*}
    [\T_i][\T_j] \cong [\T_j][\T_i] & \text{ if } \la i,j \ra = 0 \\
    [\T_i][\T_j][\T_i] \cong [\T_j][\T_i][\T_j] & \text{ if } \la i,j \ra = -1.
\end{align*}
The braiding property is a consequence of the following more fundamental relations 
\begin{equation} \label{eq:TiTjEi}
[\T_i] [\T_j] \E_i \cong \E_j [\T_i] [\T_j] \qquad\text{and}\qquad [\T_i] [\T_j] \F_i \cong \F_j [\T_i] [\T_j]
\end{equation}
for $\la i,j \ra = -1$ \cite[Corollary~7.3]{C2}. The same relations hold for $\T_i,\T_j$. Note that if we denote by $s_i$ ($i \in \hI$) the generators of the affine Weyl group $\hW_n$ then $\T_i \1_\uk = \1_{s_i \cdot \uk} \T_i$.

It is also useful to consider the following ``shifted'' braid group action. We set $\T'_i \1_\uk \coloneqq \T_i \1_\uk [k_i] \la -k_i \ra$. This notation agrees with that from \cite{C2,CK4} (with the same caveat as before that we have switched $\T'$ with its inverse). We will mostly use $\T_i'$ in the rest of the paper.

These $\T'_i$ have several nice properties. First, the corresponding complexes always lie in non-positive cohomological degrees. Second, they still satisfy the relations described in \eqref{eq:TiTjEi}. Third, we have 
\begin{equation}\label{eq:TE=FT}
\F_i^{(p)} \T_i' \1_\uk \cong \T_i' \E_i^{(p)} \1_\uk \la p(\la \uk, \alpha_i \ra + p) \ra.
\end{equation}
This relation follows from \cite[Cor. 4.6]{C2} by keeping in mind that our $\T_i'$ is a shifted version of $\T_i^{-1}$ from that paper. Notice that there is no cohomological shift on the right hand side of \eqref{eq:TE=FT}. Finally, the following is an important observation. 

\begin{Lemma}\label{lem:canonical}
The finite braid group action generated by $\T_i'$ for $i \in I$ canonically identifies any two objects $\K(\uk), \K(\uk')$ if $\uk'$ can be obtained from $\uk$ by rearranging the position of some zeros. 
\end{Lemma}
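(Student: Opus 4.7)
The plan is to proceed in three steps: compute $\T'_i \1_\uk$ explicitly when $k_i = 0$ or $k_{i+1} = 0$, build the desired identification as a composition along a reduced word, and verify canonicity via the braid relations.

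First, suppose $k_i = a$ and $k_{i+1} = 0$, so $\la \uk, \alpha_i \ra = -a \le 0$. In the Rickard complex
\[ [\E^{(a)} \to \F \E^{(a+1)} \la 1 \ra \to \F^{(2)} \E^{(a+2)} \la 2 \ra \to \dotsb ][-a]\la a \ra \]
the $j$-th term for $j \ge 1$ contains a factor $\E^{(a+j)} \1_\uk$, which lands in a weight space with $-j$ in position $i$ and hence vanishes by our standing convention. So the complex collapses and $\T_i \1_\uk \cong \E^{(a)} \1_\uk [-a] \la a \ra$; combined with the defining shift $\T'_i \1_\uk = \T_i \1_\uk [k_i] \la -k_i \ra$, the shifts cancel to give $\T'_i \1_\uk \cong \E^{(a)} \1_\uk$. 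Symmetrically, if $k_i = 0$ and $k_{i+1} = a$ one obtains $\T'_i \1_\uk \cong \F^{(a)} \1_\uk$, and if $k_i = k_{i+1} = 0$ the complex has a single term so $\T'_i \1_\uk \cong \1_\uk$. In each case $\T'_i$ restricts to an equivalence $\K(\uk) \xrightarrow{\sim} \K(s_i \cdot \uk)$, since $\T'_i$ is part of an invertible braid action.

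Next, given $\uk'$ obtained from $\uk$ by rearranging zeros, there is a permutation $w \in S_n$ with $w \cdot \uk = \uk'$ that sends zero positions to zero positions. Choosing a reduced expression $w = s_{i_1} \cdots s_{i_r}$ and tracking intermediate weights, every simple transposition swaps two positions of which at least one carries a zero, so by the first step each $\T'_{i_j}$ is an equivalence between the relevant weight spaces, and the composition $\T'_{i_1} \cdots \T'_{i_r} \1_\uk$ is an equivalence $\K(\uk) \xrightarrow{\sim} \K(\uk')$. Canonicity follows because any two reduced expressions for $w$ are connected by braid moves (Matsumoto's theorem), and the corresponding functors are canonically identified via the braid relations $\T'_i \T'_j \cong \T'_j \T'_i$ (for $\la i, j \ra = 0$) and $\T'_i \T'_j \T'_i \cong \T'_j \T'_i \T'_j$ (for $\la i, j \ra = -1$) that hold for the shifted braid action. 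Moreover, two valid choices of $w$ differ by transpositions swapping pairs of zeros, which act as the identity by the third case above.

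The main subtlety is the precise bookkeeping of grading and cohomological shifts in the first step: the outer shift $[\l]\la -\l\ra$ of the Rickard complex must combine exactly with the defining shift $[k_i]\la -k_i\ra$ of $\T'_i$ to leave $\E^{(a)}$, $\F^{(a)}$, or $\1$ with no residual shift. Any leftover shift would obstruct the canonicity of the final identification.
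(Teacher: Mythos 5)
Your computation in the first step is correct and more explicit than what the paper does: the paper's own proof rests entirely on the observation that $(\T'_i)^2 \1_\uk = \1_\uk$ whenever $k_i=0$ or $k_{i+1}=0$ (so that the braid group action descends to a symmetric group action on the relevant orbit), without writing out $\T'_i \1_\uk \cong \E_i^{(a)}\1_\uk$, $\F_i^{(a)}\1_\uk$, or $\1_\uk$. Your explicit formulas, however, readily imply $(\T'_i)^2 \1_\uk = \F_i^{(a)}\E_i^{(a)}\1_\uk = \1_\uk$ since $\uk$ is extremal for the $i$-th $\sl_2$-string, so the two routes are consistent.

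Where your argument has a genuine gap is the canonicity step. You choose \emph{a} permutation $w$ with $w\cdot\uk = \uk'$ sending zero positions to zero positions, but such a $w$ is not unique: two valid choices can differ not only by transpositions of zero positions, but also by any permutation of positions carrying \emph{equal nonzero} entries. For example, with $\uk = (a,a,0)$ and $\uk' = (0,a,a)$, both $w = s_1 s_2$ and $w = s_1 s_2 s_1$ send zero positions to zero positions, and they differ by $s_1$, which swaps the two $a$'s; here $\T'_1 \1_{(a,a,0)}$ is \emph{not} the identity. Relatedly, your claim that for such a $w$ ``every simple transposition swaps two positions of which at least one carries a zero'' is false in general: the reduced word $s_1 s_2 s_1$ of $w = (1\,3)$ begins with $s_1$, which swaps two nonzero entries. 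The fix is to choose $w$ to be the unique minimal-length coset representative for the parabolic subgroup stabilizing $\uk$, i.e.\ the shuffle that also preserves the relative order of nonzero positions; for that $w$ every reduced word does consist of zero-moves, and uniqueness of $w$ eliminates the need for your ``two valid choices'' claim. As written, though, the canonicity argument does not go through.
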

\begin{proof}
The key (and fairly clear) observation is that $(\T'_i)^2 \1_\uk = \1_\uk$ if $k_i=0$ or $k_{i+1}=0$. This means that the braid group action discussed above descends to a symmetric group action. The result now follows. 
\end{proof}

\subsection{Symmetric/skew sides and some terminology}\label{sec:our_actions}

For the applications we have planned, the objects $\K(\uk)$ will be $\bZ$-graded triangulated categories, where we denote the internal grading by $\{1\}$. Our goal is to construct t-structures on these triangulated categories. The main example to keep in mind is where $\K(\uk)$ is the derived category of coherent sheaves $D(Y(\uk))$ on some variety $Y(\uk)$ and where $\Hom(\uk,\uk')$ is $D(Y(\uk) \times Y(\uk'))$ (the space of kernels). The space of 2-morphisms is then just the space of maps between two kernels. See Section \ref{sec:notation} for a brief review of kernels. 

We will always assume that $\K(\uk)$ is zero if $k_i < 0$ for some $i$. Moreover, we will fix $N$ and assume that $\K(\uk)$ is zero if $\sum_i k_i \ne N$. For convenience we will also take $n > N$. One can show this is always possible to do by extending any $L\gl_n$ action to one of $L\gl_{n+k}$ for any $k \ge 0$. We do not explain this in detail since in all our examples it will be obvious how to ensure that $n > N$. 

We will consider two cases depending on the evaluation of the grading shift $\la 1 \ra$ from the $(L\gl_n,\t)$ action. On the \enquote{symmetric} side we will have $\la 1 \ra = \{1\}$ while on the \enquote{skew} side $\la 1 \ra = [1] \{-1\}$. To more easily distinguish which side we are working on we will use $\tK(\uk)$ to denote the symmetric side and $\K(\uk)$ the skew. The reason for these names is that in our main application we have 
\begin{align*}
K(\tK(\uk)) &\cong K_{\C^\times}(\bY(\uk)) \cong \Sym^{k_1}(V) \otimes \dots \otimes \Sym^{k_n}(V) \\
K(\K(\uk)) &\cong K_{\C^\times}(Y(\uk)) \cong \Lambda^{k_1}(V) \otimes \dots \otimes \Lambda^{k_n}(V)
\end{align*}
where $K_{\C^\times}(X)$ denotes the complexified Grothendieck group of coherent sheaves on $X$ tensored over $\bZ[q,q^{-1}]$ with $\C(q)$ while $Y(\uk)$ and $\bY(\uk)$ are certain flag-like varieties and $V$ is the standard representation of $SL_m$ (for some fixed $m$). 

Suppose now we have a fixed a t-structure on $\oplus_{\uk' \in W_n \cdot \uk} \K(\uk')$ where $W_n$ is the finite Weyl group. We will say that the t-structure is \emph{braid positive} at $\uk$ if the endofunctor $\T'_i$ acting on $\oplus_{\uk' \in W_n \cdot \uk} \K(\uk')$ is right t-exact for every $i \in \hI$. Similarly, we say that a t-structure on $\oplus_\uk \K(\uk)$ is braid positive if $\T'_i$ acting on it is right t-exact for every $i \in \hI$. 

Since $n > N$ each nonzero weight space $\K(\uk)$ has at least one $k_j = 0$. By Lemma~\ref{lem:canonical} let us assume that $j=n$. We can then define 
$$\R' \1_\uk \coloneqq \T'_{n-2} \dots \T'_1\T_0' \1_\uk.$$
One can readily check that $\R' \1_\uk = \1_{r \cdot \uk} \R'$ where $r \cdot (k_1, k_2, \dots, k_n) = (k_2, \dots, k_n, k_1)$. Moreover, $\E_i \R' = \R' \E_{i+1}$ and $\F_i \R' = \R' \F_{i+1}$ for all $i \in \hI$. 

Finally, we denote by $\eta$ the \enquote{highest} weight $(0,\dots,0,N)$ and by $\mu$ the \enquote{middle} weight $(0^{n-N},1^N)$.

\section{Structure results}

In this section we collect some general results regarding an action with a target 2-category $\K$ as in Section \ref{sec:our_actions}. In particular, the objects in this category are indexed by sequences $\uk \in \bZ^n$. Recall that $\1_\uk = 0$ if $k_i < 0$ for some $i$ or if $\sum_i k_i \ne N$. The main result, Proposition \ref{prop:highest}, identifies the possible 1-endomorphisms of $\K(\eta)$ where $\eta = (0,\dots,0,N)$ is the highest weight.  

\subsection{A remark about trees}\label{sec:trees}

In \cite{CKM} one used skew Howe duality to give a description of the category of $SL_m$-representations (more precisely, the full subcategory generated by tensor products of fundamental representations) in terms of webs. For our purposes, the main point is that webs on an annulus describe precisely the $U_q(L\gl_n)_{KM}$ relations in an action on a 2-category $\K$ as above. 

\begin{align}\label{fig:1}
\begin{tikzpicture}[baseline=-1cm,y={(0,-1)}]
\foreach \x/\y in {0/0,1/0,2/0,0/1,1/1,0/2} {
        \coordinate(z\x\y) at (\x+\y/2,\y/1.5);
}
\coordinate (z03) at (1,2);
\draw[mid<] (z00) node[above] {$1$} --  (z01);
\draw[mid<] (z01) -- node[left] {$2$} (z02);
\draw[mid<] (z10) node[above] {$1$} -- (z01);
\draw[mid<] (z20) node[above] {$1$} -- (z02);
\draw[mid<] (z02) -- (z03) node[below] {$3$};
\end{tikzpicture}
 =
\begin{tikzpicture}[baseline=-1cm,y={(0,-1)}]
\foreach \x/\y in {0/0,1/0,2/0,0/1,1/1,0/2} {
        \coordinate(z\x\y) at (\x+\y/2,\y/1.5);
}
\coordinate (z03) at (1,2);
\draw[mid<] (z00) node[above] {$1$} --  (z02);
\draw[mid<] (z10) node[above] {$1$} -- (z11);
\draw[mid<] (z20) node[above] {$1$} -- (z11);
\draw[mid<] (z11) -- node[right] {$2$} (z02);
\draw[mid<](z02) -- (z03) node[below] {$3$};
\end{tikzpicture}
\end{align}
For example, the planar webs above describe equivalent 1-morphisms from $(3)$ to $(1,1,1)$ (the web is read from bottom to top). We will not recall the definition of webs here because we do not use them in a substantial way. However, the relationship is conceptually helpful. For example, any planar web between $(N)$ and $(1,\dots,1)$ can be decomposed as a direct sum of trees like the one in diagram (\ref{fig:1}). Moreover, any two such trees (1-morphisms) are equivalent because of relations like those illustrated in the diagram. 

Translating this into our language, this means that any 1-morphism between $\eta = (0,\dots,0,N)$ and $\mu = (0^{n-N},1^N)$ that does not involve any $\F_0$ or $\E_0$ (i.e. the corresponding web is planar) can be decomposed as a direct sum of 1-morphisms equivalent to $\cdots \F_{n-3}^{(N-3)} \F_{n-2}^{(N-2)} \F_{n-1}^{(N-1)}$. We call such a morphism a tree (and likewise for maps from $\mu$ and $\eta$). 

One can also consider a collection of side-by-side trees. For example, figure (\ref{fig:2}) depicts a 1-morphism from $(0,0,3,0,2)$ to $(1,1,1,1,1)$. As above, the space of planar webs from $(\uk)$ to $(1,\dots,1)$ is spanned by any choice of a collection of trees. 

\begin{align}\label{fig:2}
\begin{tikzpicture}[baseline=-1cm,y={(0,-1)}]
\foreach \x/\y in {0/0,1/0,2/0,0/1,1/1,0/2} {
        \coordinate(z\x\y) at (\x+\y/2,\y/1.5);
}
\coordinate (z03) at (1,2);
\draw[mid<] (z00) node[above] {$1$} --  (z01);
\draw[mid<] (z01) -- node[left] {$2$} (z02);
\draw[mid<] (z10) node[above] {$1$} -- (z01);
\draw[mid<] (z20) node[above] {$1$} -- (z02);
\draw[mid<] (z02) -- (z03) node[below] {$3$};
\end{tikzpicture}
\hspace{.5cm}
\begin{tikzpicture}[baseline=-1cm,y={(0,-1)}]
\foreach \x/\y in {0/0,1/0,2/0,0/1,1/1,0/2} {
        \coordinate(z\x\y) at (\x+\y/2,\y/1.5);
}
\coordinate (z03) at (1,2);
\draw[mid<] (z00) node[above] {$1$} --  (z02);
\draw[mid<] (z20) node[above] {$1$} -- (z11);
\draw[mid<] (z11) -- node[right] {$ $} (z02);
\draw[mid<](z02) -- (z03) node[below] {$2$};
\end{tikzpicture}
\end{align}

Finally, one can consider such trees embedded on the annulus rather than the plane. We call these annular trees rather than planar trees. These describe 1-morphisms that also involve $\E_0$ and $\F_0$ (in contrast to planar trees). 

\subsection{Endomorphisms of the highest weight}

For $-N \le \ell \le N$ we set
$$\A^{(\ell)} = 
\begin{cases}
\F_0^{(\ell)} \F_1^{(\ell)} \dotsb \F_{n-1}^{(\ell)} & \text{ if } \ell \ge 0 \\
\E_{n-1}^{(-\ell)} \E_{n-2}^{(-\ell)} \dotsb \E_0^{(-\ell)} & \text{ if } \ell \le 0
\end{cases}$$
Recall that $\eta$ denotes the highest weight $(0,\dots,0,N)$. Note that $\A^{(\ell)} \1_\eta$ begins and ends at $\eta$. In terms of webs $\A^{(\pm N)}$ corresponds to twisting the strand labeled $N$ around the annulus (clockwise or counterclockwise depending on the sign). In particular, this means that $\A^{(N)}$ and $\A^{(-N)}$ are inverses of each other. The following is the main result in this section. 

\begin{Proposition}\label{prop:highest}
Any 1-endomorphism of $\K(\eta)$ is a direct summand of an element of the algebra generated by $\A^{(\ell)} \1_\eta$ for $-N \le \ell \le N$ and $\la \pm 1 \ra$.
\end{Proposition}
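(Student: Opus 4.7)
My plan is to translate the problem into the annular web framework of Section \ref{sec:trees} and argue by induction on the number of affine generators $\E_0, \F_0$ appearing in a presentation of the 1-endomorphism.

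\emph{Base case (planar).} First I would dispose of the case where the 1-morphism $X\colon \eta \to \eta$ uses only $\E_i, \F_i$ with $i \in I$. Since $\eta$ is the highest weight for the finite $\gl_n$ subalgebra, we have $\E_i \1_\eta = 0$ for every $i \in I$, so any such composition must begin by descending from $\eta$ via $\F_i$s and end by climbing back. By the tree discussion, any planar web from $\eta$ down to $\mu$ is a direct summand of the canonical tree $\cdots \F_{n-2}^{(N-2)} \F_{n-1}^{(N-1)}$, and the adjoint statement applies to planar webs from $\mu$ back up to $\eta$. Composing, $X$ is a direct summand of a canonical round-trip, which by Condition \ref{co:hom1} and Condition \ref{co:EF} reduces to a direct sum of grading shifts of $\1_\eta = \A^{(0)} \1_\eta$. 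This case is therefore covered.

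\emph{Inductive step.} Now suppose $X$ uses $r \geq 1$ affine generators. The plan is to show $X$ is a direct summand of a product of the form $\A^{(\ell_1)} \cdots \A^{(\ell_s)} \1_\eta$ together with grading shifts. Between any two consecutive affine generators in the composition, $X$ passes through a sequence of weights by applying only $\E_i, \F_i$ with $i \in I$. Using the planar tree relations (after possibly inserting adjunction units $\1 \to \F_i \E_i$ as permitted by Condition \ref{co:adj}), these intermediate planar segments collapse to direct summands of identity morphisms at the appropriate intermediate weights, in the same way as in the base case. What remains is a purely affine composition that represents a pure winding of the thick $N$-strand around the annulus; by the decomposition of $\F_0^{(p)}, \E_0^{(p)}$ (which is governed by Conditions \ref{co:EF} and \ref{co:theta}, together with planar tree reductions applied to the cyclically rotated intermediate weights) such a winding is a summand of an appropriate $\A^{(\ell)}$, completing the induction.

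\emph{Main obstacle.} The principal difficulty is rigorously justifying the collapse of planar interludes to summands of identities in the \emph{annular} setting, and identifying the resulting purely affine piece with a product of $\A^{(\ell)}$s. In the purely planar setting this is a consequence of the $SL_m$ web description via skew Howe duality; in the annular setting one has to reason in cylindrical webs. A useful tool is the shift relation $\E_j \R' = \R' \E_{j+1}$ from Section \ref{sec:braidaction}, which cyclically rotates affine generators and effectively re-planarises the local picture after each $\F_0$ or $\E_0$. Combined with the fact that $\A^{(N)} \1_\eta$ and $\A^{(-N)} \1_\eta$ are mutually inverse (up to a grading shift), so that the algebra generated by the $\A^{(\ell)}$s is well-behaved, this is where the web-theoretic work of the section should close the argument.
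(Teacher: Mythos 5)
Your overall strategy (dispose of the planar case, then induct on the number of affine generators, collapsing planar interludes and extracting windings) resembles the paper's in spirit, but the paper does not actually argue via annular web theory: it proves everything directly from the categorical $\hgl_n$ relations. The paper's route is: first kill all $\E$'s by prepending copies of $\A^{(N)}$ (Lemma~\ref{lem:web:only-F}); then reduce the resulting $\F$-string to a \emph{strictly ordered} form (Lemma~\ref{lem:web:strictly-order-induction}, Corollary~\ref{cor:web:strictly-order}); and finally decompose strictly ordered $\F$-strings into products of $\A^{(\ell)}$'s by a double induction on $\#_0\X$ and on $\sum_j a_j$, using the categorical $\sl_3$ relation $\F_i^{(a)}\F_j^{(b)}\F_i^{(c)} \dsss \{\F_j^{(\ell)}\F_i^{(a+c)}\F_j^{(b-\ell)}\}$ (Lemma~\ref{lem:web:sl3}). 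Nowhere does the paper need a categorified theory of cylindrical webs.

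The genuine gap in your proposal is the inductive step. Your claim that \enquote{these intermediate planar segments collapse to direct summands of identity morphisms at the appropriate intermediate weights, in the same way as in the base case} fails: the base-case collapse works only because the intermediate weight is $\eta$, where all planar endomorphisms are shifts of $\1_\eta$; at a general intermediate weight $\uk \ne \eta$ this is false ($\F_i\E_i\1_\uk$ need not be a sum of shifts of $\1_\uk$), and even the correct formulation of what \enquote{collapse} should mean between two different weights $\uk \to \uk'$ is not given. The subsequent claim that \enquote{what remains is a pure winding of the thick $N$-strand} also misdescribes the target: $\A^{(\ell)}\1_\eta$ for $0 < \ell < N$ splits off a strand of thickness $\ell$ and winds only that strand while the complementary $N-\ell$ stays put, so the affine residue is \emph{not} a winding of the full $N$-strand unless $\ell = \pm N$. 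Accounting for how different thicknesses of winding combine, nest, and commute past one another is precisely what the paper's double induction in Lemma~\ref{lem:webs:F-reduced} handles, and you do not supply a substitute for it. The shift relation $\E_j\R' = \R'\E_{j+1}$ and the invertibility of $\A^{(\pm N)}$ that you cite are relevant ingredients, but on their own they do not close the argument; the $\sl_3$-type commutation of Lemma~\ref{lem:web:sl3} is the essential tool that your proposal lacks.
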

\begin{Remark}
It actually suffices to take $\A^{(\ell)}$ only for $1 \le \ell \le N$ and $\ell = -N$. 
\end{Remark}

In the remainder of this section we will prove this result. For any two 1-morphisms $\X$, $\Y\colon \K(\uk) \to \K(\uk')$ we write $\X \dsss \Y$ if $\X$ is a direct summand of $\bigoplus_f \Y$ for some $f \in \N[q,q^{-1}]$. For example, we have $\F_i^{(a)} \F_i^{(b)} \dsss \F_i^{(a+b)}$. More generally, if $\{\Y_\alpha : \alpha \in S\}$ is any finite collection of 1-morphisms $\K(\uk) \to \K(\uk')$ we write
$$ \X \dsss \{\Y_\alpha : \alpha \in S\} $$
if $\X$ is a direct summand of $\bigoplus_\alpha \bigoplus_{f_\alpha} \Y_\alpha$ for some $f_\alpha \in \N[q,q^{-1}]$. Note that $\dsss$ is a transitive relation.

\begin{Lemma}\label{lem:web:EFcommute}
Let $\uk$ be any weight and $a,b \in \N$. Then
\begin{align*}
\E_i^{(a)}\F_i^{(b)}\1_\uk &\dsss \bigl\{ \F_i^{(b-\ell)}\E_i^{(a-\ell)}\1_\uk : \ell \ge 0 \bigr\} \\
\F_i^{(b)}\E_i^{(a)}\1_\uk &\dsss \bigl\{ \E_i^{(a-\ell)}\F_i^{(b-\ell)}\1_\uk : \ell \ge 0 \bigr\}
\end{align*}
where $\F_i^{(c)}=\E_i^{(c)} = 0$ for $c < 0$.
\end{Lemma}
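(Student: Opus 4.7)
The plan is to prove the first statement by a nested induction and then deduce the second by the same argument with the roles of $\E_i$ and $\F_i$ swapped (or equivalently by taking left/right adjoints via condition (\ref{co:adj})). The two essential inputs are: (i) the divided power absorption
$$\E_i \E_i^{(c-1)} \cong \bigoplus_{[c]} \E_i^{(c)}, \qquad \F_i \F_i^{(c-1)} \cong \bigoplus_{[c]} \F_i^{(c)},$$
which in particular gives $\E_i^{(c)} \dsss \E_i \E_i^{(c-1)}$ (and analogously for $\F_i$) and is standard from condition (\ref{co:theta}); and (ii) condition (\ref{co:EF}), which in either sign regime of $\la \uk, \alpha_i \ra$ yields
$$\E_i \F_i \1_\uk \dsss \bigl\{ \F_i \E_i \1_\uk,\ \1_\uk \bigr\}.$$

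First I would establish the case $a = 1$ (for all $b$ and all $\uk$) by induction on $b$. The step rewrites $\E_i \F_i^{(b)} \1_\uk \dsss \E_i \F_i \F_i^{(b-1)} \1_\uk$ using (i), applies condition (\ref{co:EF}) to the leftmost pair, invokes the inductive hypothesis on $\E_i \F_i^{(b-1)} \1_{\uk'}$, and reabsorbs via $\F_i \F_i^{(c)} \dsss \F_i^{(c+1)}$ to conclude $\E_i \F_i^{(b)} \1_\uk \dsss \{\F_i^{(b)} \E_i \1_\uk,\ \F_i^{(b-1)} \1_\uk\}$.

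For general $a$, I would induct on $a$: write $\E_i^{(a)} \F_i^{(b)} \1_\uk \dsss \E_i \E_i^{(a-1)} \F_i^{(b)} \1_\uk$, apply the inductive hypothesis to $\E_i^{(a-1)} \F_i^{(b)} \1_\uk$ to replace it with a sum of terms $\F_i^{(b-\ell)} \E_i^{(a-1-\ell)} \1_\uk$, and then use the $a=1$ case to commute the remaining single $\E_i$ past each $\F_i^{(b-\ell)}$. After reabsorbing $\E_i \E_i^{(a-1-\ell)} \dsss \E_i^{(a-\ell)}$, the surviving terms have the form $\F_i^{(b-\ell)} \E_i^{(a-\ell)} \1_\uk$ and $\F_i^{(b-(\ell+1))} \E_i^{(a-(\ell+1))} \1_\uk$, both in the claimed family.

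The whole argument is essentially bookkeeping: the relation $\dsss$ absorbs all the $q$-multiplicities (the Gaussian binomial coefficients) that would appear in the literal categorified identity, so there is no quantum arithmetic to carry out. The only real care needed is to set up the induction so that the $a = 1$ case, proved by a sub-induction on $b$, is available as a lemma for the general case without circularity; the convention $\E_i^{(c)} = \F_i^{(c)} = 0$ for $c < 0$ truncates the recursion. Consequently I expect the main obstacle to be nothing more than avoiding index errors while chasing the inductive step through.
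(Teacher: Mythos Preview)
Your proposal is correct and follows essentially the same approach as the paper: both reduce to the case $a=b=1$ via the divided-power absorption $\E_i^{(a)} \dsss \E_i^a$, $\F_i^{(b)} \dsss \F_i^b$ and then invoke condition~(\ref{co:EF}). The paper simply states the reduction in one line (``it suffices to prove the case $a=b=1$'') and leaves the bookkeeping implicit, whereas you spell out the nested induction explicitly.
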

\begin{proof}
Let us prove the first claim (the second is similar). Since $\dsss$ is a transitive relation and $\F_i^{(a)} \F_i^{(b)} \dsss \F_i^{(a+b)} \dsss \F_i^{(a)} \F_i^{(b)}$ (and likewise for $\E$s) it suffices to prove the case $a=b=1$. 

If $\la \uk, \alpha_i \ra \ge 0$ then the result is clear since $\E_i \F_i \1_\uk \cong \F_i \E_i \1_\uk \bigoplus_{[\la \uk, \alpha_i \ra]} \1_\uk$. Otherwise, if $\la \uk, \alpha_i \ra \le 0$ then $\E_i \F_i \1_\uk$ is a direct summand of $\F_i \E_i \1_\uk$ and we are done again. 
\end{proof}

\begin{Lemma}\label{lem:web:sl3}
Let $\uk$ be any weight, $\la i, j\ra = -1$ and $a,b,c \in \N$. Then
$$\F_i^{(a)}\F_j^{(b)}\F_i^{(c)}\1_\uk \dsss \{\F_j^{(\ell)}\F_i^{(a+c)}\F_j^{(b-\ell)}\1_\uk : 0 \le \ell \le b\}.$$
\end{Lemma}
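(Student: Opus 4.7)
The plan is to prove the claim by induction on $b$, with the rank-$2$ categorified Serre relation as the main input.

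\textbf{Base case.} When $b=0$ the statement reduces to the standard divided-power absorption $\F_i^{(a)}\F_i^{(c)}\1_\uk \dsss \F_i^{(a+c)}\1_\uk$, which is precisely the $\ell=0$ term on the right-hand side.

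\textbf{Inductive step.} The key ingredient is the categorified $A_2$ Serre isomorphism
\[ \F_i\F_j\F_i\1_{\uk'} \cong \F_i^{(2)}\F_j\1_{\uk'} \oplus \F_j\F_i^{(2)}\1_{\uk'}, \]
valid when $\la i,j\ra = -1$. This is a standard consequence of the axioms of an $(\hgl_n,\t)$ action restricted to the rank-$2$ subsystem generated by $\alpha_i$ and $\alpha_j$, and it is equivalent, after passing to the Grothendieck group, to the quantum Serre relation for $U_q(\sl_3)$. Iterating this isomorphism together with divided-power absorption $\F_i\F_i^{(r)} \dsss \F_i^{(r+1)}$ immediately gives the $b=1$ case of the lemma:
\[ \F_i^{(a)}\F_j\F_i^{(c)}\1_{\uk'} \dsss \{\F_j\F_i^{(a+c)}\1_{\uk'},\ \F_i^{(a+c)}\F_j\1_{\uk'}\}. \]
For general $b \ge 1$ I would factor $\F_j^{(b)}\1_\uk \dsss \F_j\F_j^{(b-1)}\1_\uk$ (divided-power absorption) and then apply the $b=1$ case to the $\F_i^{(a)}\F_j\F_i^{(c)}$-portion. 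This moves one $\F_j$ past the $\F_i^{(a+c)}$-block in one of two directions, producing summands of the form $\F_j\F_i^{(a+c)}\F_j^{(b-1)}\1_\uk$ and $\F_i^{(a+c)}\F_j\F_j^{(b-1)}\1_\uk$. Applying the inductive hypothesis to what remains, and recursively deciding for each remaining $\F_j$ whether to place it to the left or to the right of $\F_i^{(a+c)}$, collects direct summands of the form $\F_j^{(\ell)}\F_i^{(a+c)}\F_j^{(b-\ell)}\1_\uk$ for every $0 \le \ell \le b$.

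\textbf{Main obstacle.} The delicate point is making sure that all intermediate values $0 < \ell < b$ actually arise as direct summands, not just the extremes $\ell = 0$ and $\ell = b$. The argument is organized so that at each stage of the recursion one has a free binary choice of which side of $\F_i^{(a+c)}$ to move the next $\F_j$ to; transitivity of $\dsss$ combined with the fact that we only need direct-summand containment, rather than an exact isomorphism of decompositions matching quantum-integer multiplicities, allows all such choices to be combined into one statement. This keeps the bookkeeping tractable and avoids having to invoke the full web calculus of \cite{CKM}.
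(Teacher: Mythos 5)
Your claim that the $b=1$ case, namely $\F_i^{(a)}\F_j\F_i^{(c)}\dsss\{\F_j\F_i^{(a+c)},\,\F_i^{(a+c)}\F_j\}$, follows ``immediately'' from iterating $\F_i\F_j\F_i\cong\F_i^{(2)}\F_j\oplus\F_j\F_i^{(2)}$ is not justified. Peeling one $\F_i$ off each side and applying that isomorphism gives
\[
\F_i^{(a)}\F_j\F_i^{(c)}\dsss\F_i^{(a+1)}\F_j\F_i^{(c-1)}\oplus\F_i^{(a-1)}\F_j\F_i^{(c+1)},
\]
and both summands are again of the form $\F_i^{(a')}\F_j\F_i^{(c')}$ with $a'+c'=a+c$; no natural induction quantity (e.g.\ $\min(a,c)$, $ac$, $|a-c|$) decreases in both terms simultaneously, so the recursion cycles rather than terminating. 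The paper does not claim this step is elementary either: it quotes Corollary~4.8 of \cite{CK2} for precisely this statement, which is a nontrivial categorification result and not an axiom of an $(\hgl_n,\t)$ action.

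The inductive step also does not go through as written. You factor on the $\F_j$ side, obtaining $\F_i^{(a)}\F_j\F_j^{(b-1)}\F_i^{(c)}$, and then propose to ``apply the $b=1$ case to the $\F_i^{(a)}\F_j\F_i^{(c)}$-portion''. But no such contiguous subexpression exists: the extracted $\F_j$ is separated from $\F_i^{(c)}$ by $\F_j^{(b-1)}$, and $\F_j$, $\F_i$ do not commute, so the $b=1$ statement cannot be invoked. The paper's induction is organized differently: it peels a single $\F_i$ off $\F_i^{(a)}$ and uses Corollary~4.8 of \cite{CK2} to slide that $\F_i$ through $\F_j^{(b)}$, producing a summand $\F_j^{(b-1)}\F_i\F_j\F_i^{(c)}$, at which point $\F_i\F_j\F_i^{(c)}$ \emph{is} contiguous and the $b=1$ case applies; this ability to commute an $\F_i$ through a divided power $\F_j^{(b)}$ (up to direct summands) is the ingredient your argument is missing.

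Finally, the ``main obstacle'' you raise is not actually required by the statement: $\dsss$ only asks that the left-hand side be a direct summand of \emph{some} shifted direct sum drawn from the listed family, with no requirement that every index $0\le\ell\le b$ occur. In the paper's $a=1$ step, for example, only $\ell=b-1$ and $\ell=b$ arise.
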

\begin{proof}
We use nested inductions on $b$ and then on $a$. The case $b = 1$ is \cite[Corollary~4.8]{CK2}. So assume that $b \ge 2$. 

If $a = 1$, then $\F_i\F_j^{(b)}\F_i^{(c)} \dsss \F_j^{(b-1)}\F_i\F_j\F_i^{(c)}$ by \cite[Corollary~4.8]{CK2}. By the base case $b=1$ this is then a direct summand of a direct sum of shifts of $\F_j^{(b-1)}\F_i^{(c+1)}\F_j$ and $\F_j^{(b-1)}\F_j\F_i^{(c+1)} \dsss \F_j^{(b)}\F_i^{(c+1)}$.
    
Now let $a > 1$. Then
$$\F_i^{(a)}\F_j^{(b)}\F_i^{(c)} \dsss \F_i^{(a)}\F_j\F_j^{(b-1)}\F_i^{(c)} \dsss \F_i^{(a-1)}\F_j\F_i\F_j^{(b-1)}\F_i^{(c)}.$$ 
By induction on $b$ applied to $\F_i\F_j^{(b-1)}\F_i^{(c)}$ this is a direct summand of a sum of shifts of
$$\F_i^{(a-1)}\F_j\F_j^{(\ell)}\F_i^{(c+1)}\F_j^{(b-1-\ell)} \dsss \F_i^{(a-1)}\F_j^{(1+\ell)}\F_i^{(c+1)}\F_j^{(b-1-\ell)}, \qquad 0 \le \ell \le b-1.$$
If $\ell = b-1$, this is of the required form by induction on $a$. Otherwise it is of the required form by induction on $b$. 
\end{proof}

By an \emph{$\F$-string} we will mean a sequence of the form $\F_{i_m}^{(\ell_m)} \dotsc \F_{i_1}^{(\ell_1)}$ for $0 \le i_j \le n-1$ and $\ell_j \in \N$. We call $m$ the \emph{length} of the $\F$-string.

\begin{Lemma}\label{lem:web:only-F}
Let $\X \1_\eta$ be any 1-endomorphism $\K(\eta)$. Then $\X \1_\eta \dsss \{(\A^{(-N)})^{a_\Y} \Y \1_\eta\}$ for a finite collection of $a_\Y \ge 0$ and $\F$-strings $\Y$.
\end{Lemma}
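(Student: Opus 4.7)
My plan is to argue by induction on the total $\E$-multiplicity (the sum of the divided-power exponents $\ell$ over all $\E_i^{(\ell)}$ factors) appearing in an expression of $\X$ as a word in the generators. The base case has zero $\E$s, so $\X$ is already an $\F$-string and the statement holds with $a_\Y = 0$ and $\Y = \X$. Before the inductive step proper, observe that any 1-endomorphism of $\K(\eta)$ satisfies $\#\F_i(\X) - \#\E_i(\X) = p$ independent of $i \in \hI$, because $\sum_i \alpha_i = 0$ is the only nontrivial relation among the affine simple roots. When $p < 0$, I would apply $\1_\eta \cong (\A^{(-N)})^a (\A^{(N)})^a$ for $a = \lceil -p/N \rceil$, reducing to the 1-endomorphism $(\A^{(N)})^a \X \1_\eta$ with $p' = p + aN \ge 0$; the prefactor $(\A^{(-N)})^a$ then appears in the final output. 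So we may restrict to the case $p \ge 0$.

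In the inductive step with $p \ge 0$ and positive $\E$-multiplicity, the presence of at least one $\E_i$ forces (since $p\ge 0$) at least one $\F_i$ with the same index. Using the commutation $\F_j \E_i \cong \E_i \F_j$ for distinct $i,j \in \hI$ (extending axiom~(\ref{co:EiFj}) to the affine generators via the relations available in a $(\hgl_n, \t)$ action), together with Lemma~\ref{lem:web:sl3} to disentangle adjacent $\langle i, j\rangle = -1$ patterns inside the $\F$-subword, I would bring a chosen $\F_i$ and $\E_i$ into adjacency. Lemma~\ref{lem:web:EFcommute} then produces a direct-summand decomposition of $\X \1_\eta$ whose ``cancellation'' ($\ell > 0$) summands have strictly smaller $\E$-multiplicity and are immediately handled by the inductive hypothesis. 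The ``shuffle'' ($\ell = 0$) summand has the same $\E$-multiplicity but with the chosen $\E_i$ moved past the $\F_i$; iterating the commutation on different $\E$s and $\F$s, and, once the summand has reached a pure $\E$-then-$\F$ normal form $\E^{\mathrm{str}} \F^{\mathrm{str}}$, prepending a full twist via $\1_\eta \cong \A^{(-N)} \A^{(N)}$ to force fresh cancellations of the $\E$s in $\E^{\mathrm{str}}$ against the inserted $\A^{(N)}$, eventually exhausts the $\E$-multiplicity in every summand and yields the required form $(\A^{(-N)})^{a_\Y} \Y \1_\eta$.

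The main obstacle is making the ``rotate-and-cancel'' step rigorous: one must verify that after finitely many insertions of $\A^{(-N)} \A^{(N)}$ the shuffle branch is guaranteed to feed a genuine cancellation, so that the $\E$-multiplicity strictly drops on every path through the induction. Conceptually this is the annular-web statement of Section~\ref{sec:trees}: every 1-endomorphism of the highest-weight strand decomposes, up to direct summands, into compositions of the full twist $\A^{(\pm N)}$ with planar $\F$-trees. The technical ingredients making this precise are the sl$_3$-type relation of Lemma~\ref{lem:web:sl3}, the divided-power bookkeeping of Lemma~\ref{lem:web:EFcommute}, and the finiteness of the set of weights that are $\E$-accessible from $\eta$ at any fixed $\E$-multiplicity.
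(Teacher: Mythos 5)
Your overall strategy --- induct on how many $\E$'s appear and buy the prefactor $\A^{(-N)}$ at the cost of multiplying by $\A^{(N)}$ --- is the same as the paper's, but you are missing one observation that makes the whole thing collapse into a two-line argument and also renders your acknowledged gap unnecessary to worry about. Namely: any $\1_\eta\X\1_\eta$ can first be rewritten so that all $\E$'s sit on the left, using only Lemma~\ref{lem:web:EFcommute} (for same index) and the commutation $\E_i\F_j\cong\F_j\E_i$, $i\ne j$ (for distinct indices; this does \emph{not} change the number of $\E$'s). Once that is done, the leftmost generator must be $\E_{n-1}$, because $\eta=(0,\dots,0,N)$ and $\1_\eta\E_i=\E_i\1_{\eta-\alpha_i}$ is zero unless $\eta-\alpha_i$ has nonnegative entries, i.e.\ $i=n-1$. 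Writing $\X=\E_{n-1}^{(\ell)}\X'$ with $\X'$ having strictly fewer $\E$'s, you then compute
\[
\A^{(N)}\X\cong\F_0^{(N)}\dotsm\F_{n-2}^{(N)}\bigl(\F_{n-1}^{(N)}\E_{n-1}^{(\ell)}\bigr)\X'\cong\F_0^{(N)}\dotsm\F_{n-2}^{(N)}\F_{n-1}^{(N-\ell)}\X',
\]
where the second isomorphism holds because, at the weight $(0,\dots,0,\ell,N-\ell)$, every summand $\E_{n-1}^{(\ell-j)}\F_{n-1}^{(N-j)}$ with $j\ne\ell$ vanishes. The $\E$-count drops by at least one and you conclude by induction, collecting one factor of $\A^{(-N)}$ per step.

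In your version, the bookkeeping with $p=\#\F_i-\#\E_i$ and the case split $p<0$ versus $p\ge 0$ are unnecessary: the argument above handles all cases uniformly. More seriously, the ``shuffle branch'' is a genuine gap, and you correctly identified it yourself --- without the observation that the leftmost $\E$ is pinned to be $\E_{n-1}$ by the weight $\eta$, there is no reason that inserting $\A^{(-N)}\A^{(N)}$ must eventually produce a cancellation rather than shuffling forever. Lemma~\ref{lem:web:sl3} also plays no role here; it is only needed later when analyzing the resulting $\F$-strings (Lemmas~\ref{lem:web:strictly-order-induction} and~\ref{lem:webs:F-reduced}), not in the reduction to $\F$-strings.
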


\begin{proof}
We use induction on the number of $\E$'s in $\X$. If there are none, we are done. Otherwise put all $\E$s to the left of the sequence using Lemma \ref{lem:web:EFcommute}. This does not increase the number of $\E$s. Thus we can assume $\X = \E_{n-1}^{(\ell)} \X'$ where $\X'$ has fewer $\E$s. But then 
$$\A^{(N)} \X \cong \F_0^{(N)} \dotsb \F_{n-2}^{(N)} \F_{n-1}^{(N)} \E_{n-1}^{(\ell)} \X' \cong \F_0^{(N)} \dotsb \F_{n-2}^{(N)} \F_{n-1}^{(N-\ell)} \X'$$
and the result follows by induction. 
\end{proof}

We say an $\F$-string $\X = \F_{i_m}^{(\ell_m)} \dotsc \F_{i_1}^{(\ell_1)}$ is \emph{strictly ordered} if $i_{j+1} = i_j-1 \bmod n$ and $\ell_j \ne 0$ for all $j$. In this case we write $\#_0\X$ for the number of $j$ with $i_j=0$ (the number of $\F_0^{(\ell)}$ in the string). For $\ell_0$, \dots, $\ell_{n-1} \in \N$ let $\F^{(\ell_0,\dotsc,\ell_{n-1})}$ denote $\F_0^{(\ell_0)} \dotsc \F_{n-1}^{(\ell_{n-1})}$. For simplicity in the rest of this section we will take the all indices of $\F_i$ modulo $n$.

\begin{Lemma}\label{lem:web:strictly-order-induction}
    Suppose $\X$ is a strictly ordered string. Then $\1_\eta \X \F_j^{(a)} \dsss \{ \1_\eta \Y\}$ where $\Y$ belongs to a finite collection of strictly ordered strings. Moreover, $\#_0\Y \le \#_0\X$ unless $j=0$ and $\X \cong \X'\F_{n-1}^{(b)}$ in which case $\#_0\Y = \#_0\X+1$. 
\end{Lemma}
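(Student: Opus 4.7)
The plan is to proceed by induction on the length $m$ of $\X$. For the base case $m = 0$ (so $\X = \1$), the statement reduces to inspecting $\1_\eta \F_j^{(a)}$: for $j \in I \setminus \{0\}$ with $a > 0$ the source weight $\eta + a\alpha_j$ has a negative coordinate and the morphism vanishes; for $j = 0$, $\F_0^{(a)}$ is itself a strictly ordered string of length one with $\#_0 = 1$, consistent with treating $\X = \1$ as the degenerate exceptional configuration.

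For the inductive step I would decompose $\X = \X' \F_{i_1}^{(\ell_1)}$ with $\X'$ strictly ordered of length $m-1$ and split into four cases determined by the relation between $j$ and $i_1$ in the affine Dynkin diagram of $\hgl_n$. When $j = i_1$ the divided-power identity $\F_{i_1}^{(\ell_1)} \F_{i_1}^{(a)} \dsss \F_{i_1}^{(\ell_1 + a)}$ produces a strictly ordered result with the same $\#_0$. When $j = i_1 + 1 \bmod n$ the product $\X \F_j^{(a)}$ is already strictly ordered, and $\#_0$ increases by one precisely when $j = 0$ (that is, $i_1 = n - 1$), matching the excepted case. When $\la j, i_1 \ra = 0$ the factors $\F_{i_1}^{(\ell_1)}$ and $\F_j^{(a)}$ commute, so $\X \F_j^{(a)} \cong \X' \F_j^{(a)} \F_{i_1}^{(\ell_1)}$; one applies the inductive hypothesis first to $\X' \F_j^{(a)}$ and then, on each resulting strict string, a second time to handle the trailing $\F_{i_1}^{(\ell_1)}$.

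The delicate case is $j = i_1 - 1 \bmod n$, where strict ordering of $\X$ forces $i_2 = j$ when $m \ge 2$. I would apply Lemma~\ref{lem:web:sl3} to the three rightmost factors, obtaining
\[ \F_{i_2}^{(\ell_2)} \F_{i_1}^{(\ell_1)} \F_{i_2}^{(a)} \dsss \bigl\{ \F_{i_1}^{(\ell)} \F_{i_2}^{(\ell_2 + a)} \F_{i_1}^{(\ell_1 - \ell)} : 0 \le \ell \le \ell_1 \bigr\}. \]
The $\ell = 0$ summand yields the strictly ordered string $\X'' \F_{i_2}^{(\ell_2 + a)} \F_{i_1}^{(\ell_1)}$ with $\#_0$ preserved, while each $\ell > 0$ summand is further decomposed by repeated use of Lemma~\ref{lem:web:sl3} combined with the inductive hypothesis. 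The edge case $m = 1$ is handled analogously, padding with a trivial $\F_{i_1}^{(0)}$ in the third slot of Lemma~\ref{lem:web:sl3} and selecting the strict summand $\F_j^{(a)} \F_{i_1}^{(\ell_1)}$ arising from $\ell = a$.

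The hardest part will be verifying termination of the nested recursion in the commuting case and in the $\ell > 0$ subcases of the adjacent-on-the-left case, because Lemma~\ref{lem:web:sl3} and the commutation moves all preserve the total divided-power weight $\sum_k \ell_k + a$. This is resolved by a carefully chosen lexicographic induction --- for instance on the pair $(m, \sum_k \ell_k)$ refined by a \emph{distance from strictness} defect counting positions where consecutive indices violate the rule $i_{j+1} = i_j - 1 \bmod n$ --- so that each recursive call strictly decreases the chosen invariant. The moreover claim about $\#_0$ then follows from a uniform case-by-case check that no new $\F_0$ factor is introduced outside the single exceptional configuration in the adjacent-on-the-right case.
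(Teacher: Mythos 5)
Your case split on the relation of $j$ to $i_1$ is a natural idea, but it does not deliver a sound induction, and you yourself flag the problem without resolving it. In the commuting case you write $\X\F_j^{(a)} \cong \X'\F_j^{(a)}\F_{i_1}^{(\ell_1)}$, apply the inductive hypothesis to $\X'\F_j^{(a)}$ (valid, since $|\X'|=m-1$), and then want to ``apply it a second time'' to each $\Y'\F_{i_1}^{(\ell_1)}$. But the strings $\Y'$ produced by the first application can be longer than $\X$, so this second application is not covered by an induction on length. Your proposed repair --- a lexicographic induction on $(m,\sum_k \ell_k)$ ``refined by a distance-from-strictness defect'' --- does not obviously work: as you note, $\sum_k\ell_k$ is preserved by the moves of Lemma~\ref{lem:web:sl3} and by commutations; $m$ can increase; and the defect is zero on every $\Y'$ precisely because the inductive hypothesis returns \emph{strictly ordered} strings. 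So no component of the proposed tuple is guaranteed to decrease, and termination is unproven. The same concern applies to the $\ell>0$ branches of the adjacent-on-the-left case.

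The paper avoids the issue by never recursing on a potentially longer string. It tracks a single ``traveling'' divided power: move $\F_j^{(a)}$ leftwards past commuting factors until it meets $\F_{j+1}$, apply Lemma~\ref{lem:web:sl3} once to produce a new traveling factor $\F_{j+1}^{(\ell)}$ strictly to the left, and repeat. The position of the traveling factor strictly decreases, which gives termination outright and also makes the $\#_0$ bookkeeping transparent (the only way to create a new $\F_0^{(\cdot)}$ is when $j=n-1$ in that local move, and the paper then shows that the new $\F_0^{(\ell)}$ continues to travel left and gets absorbed unless the algorithm halts immediately, i.e.\ unless $j=0$ and $i_1=n-1$). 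By contrast, in your scheme the $\#_0$ accounting across the nested re-applications of the lemma is left to ``a uniform case-by-case check'' that would be nontrivial to carry out because the structure of the intermediate $\Y'$ is not controlled. To salvage your argument you would essentially need to reintroduce the paper's positional invariant; at that point the two proofs coincide. As written, there is a genuine gap.
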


\begin{proof}
We can assume $\1_\eta \X = \1_\eta \F_0^{(\cdot)} \F_1^{(\cdot)} \cdots \F_{n-1}^{(\cdot)} \F_0^{(\cdot)} \cdots \F_i^{(\cdot)}$ where the $(\cdot)$ represent nonzero exponents. Note that $\1_\eta \X$ has to begin on the left with a $\F_0^{(\cdot)}$ because otherwise it is zero. Note also that there might be many appearances of $\F_0^{(\cdot)}$ in the string for $\X$.

Our argument is by induction on the length of $\X$. If $j=i$ or $j=i+1$ we are done. Otherwise we start moving the $\F_j^{(a)}$ in $\1_\eta \X \F_j^{(a)}$ to the left until it hits an $\F_{j+1}$. This way we end up with 
\begin{equation}\label{eq:1}
\1_\eta \cdots \F_j^{(b)} \F_{j+1}^{(c)} \F_j^{(a)} \cdots \F_i^{(\cdot)}.
\end{equation}
Now, by Lemma \ref{lem:web:sl3}, we get
\begin{equation}\label{eq:2}
\1_\eta \X \F_j^{(a)} \dsss \{\1_\eta \cdots \F_{j+1}^{(\ell)} [\F_j^{(a+b)} \F_{j+1}^{(c-\ell)} \cdots \F_i^{(\cdot)}]: 0 \le \ell \le c \}.
\end{equation}
If $c = \ell$ then we are done by induction on the length. Otherwise the expression in the brackets is strictly ordered and we continue moving the $\F_{j+1}^{(\ell)}$ to the left. This procedure must end since the string is finite.

Finally we need to explain why the number of $\F_0^{(\cdot)}$ does not increase (unless $j=0$). In the argument above the only time an extra $\F_0^{(\cdot)}$ appears is when, in equation \eqref{eq:1}, $j=n-1$. In this case the right side of equation \eqref{eq:2} reads
$$\{\1_\eta \cdots \F_{0}^{(\ell)} [\F_{n-1}^{(a+b)} \F_{0}^{(c-\ell)} \cdots \F_i^{(\cdot)}]: 0 \le \ell \le c \}.$$
We now continue moving the $\F_0^{(\ell)}$ to the left. It will eventually hit an $\F_1^{(\cdot)}$ at which point we will have something of the form $\1_\eta \cdots \F_0^{(\cdot)} \F_1^{(\cdot)} \F_0^{(\ell)} \cdots$  which, upon simplifying again, decreases the number of $\F_0^{(\cdot)}$. So overall the number of $\F_0^{(\cdot)}$ does not increase. The only exception is if $j=0$ and $i=n-1$ in which case the algorithm immediately stops and we get an extra $\F_0^{(\cdot)}$. 
\end{proof}

\begin{Corollary}\label{cor:web:strictly-order}
For any $\F$-string $\X$ we have $\1_\eta \X \dsss \{ \1_\eta \Y \}$ where $\Y$ varies over a finite collection of strictly ordered $\F$-strings.
\end{Corollary}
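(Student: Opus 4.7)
The plan is to induct on the length $m$ of the $\F$-string $\X = \F_{i_m}^{(\ell_m)} \dotsb \F_{i_1}^{(\ell_1)}$, using Lemma~\ref{lem:web:strictly-order-induction} to handle each step. At the outset I would harmlessly drop any factors $\F_{i_j}^{(0)} = \id$ from $\X$ so that all surviving exponents are strictly positive; this does not change $\1_\eta \X$.

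For $m = 0$ the statement is immediate: $\1_\eta$ itself is the (empty) strictly ordered string. For the inductive step I would split off the rightmost factor and write $\X = \X' \F_{i_1}^{(\ell_1)}$, where $\X'$ is an $\F$-string of length $m-1$. The composition $\1_\eta \X'$ is a genuine $1$-morphism -- it starts at the weight determined by $\X'$ and ends at $\eta$ -- so the inductive hypothesis produces a finite collection of strictly ordered $\F$-strings $\{\Y'_\alpha\}$ with $\1_\eta \X' \dsss \{\1_\eta \Y'_\alpha\}$. Post-composition with $\F_{i_1}^{(\ell_1)}$ preserves $\dsss$ (any direct summand decomposition remains valid after applying the same $1$-morphism on the right), so
$$\1_\eta \X \dsss \bigl\{ \1_\eta \Y'_\alpha \F_{i_1}^{(\ell_1)} \bigr\}.$$
Now each expression $\1_\eta \Y'_\alpha \F_{i_1}^{(\ell_1)}$ is exactly of the shape covered by Lemma~\ref{lem:web:strictly-order-induction}, which rewrites it as a direct summand of a finite direct sum of the form $\bigoplus \1_\eta \Z_{\alpha,\beta}$ for strictly ordered $\F$-strings $\Z_{\alpha,\beta}$. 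Transitivity of $\dsss$ then yields $\1_\eta \X \dsss \{\1_\eta \Z_{\alpha,\beta}\}_{\alpha,\beta}$, which is the corollary.

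All the real work has been packaged into Lemma~\ref{lem:web:strictly-order-induction}, so I do not anticipate any new obstacle here; the corollary is essentially the iteration of that lemma, one factor at a time. The only subtlety worth flagging is that Lemma~\ref{lem:web:strictly-order-induction} is stated for appending $\F_j^{(a)}$ on the \emph{right} of an already strictly ordered string, which is precisely why the induction must be organized by peeling off the rightmost factor of $\X$ rather than the leftmost.
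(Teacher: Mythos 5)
Your proof is correct and follows the paper's own argument: the paper gives the one-line proof ``apply Lemma~\ref{lem:web:strictly-order-induction} repeatedly starting on the left,'' and your induction on the length of $\X$, peeling off the rightmost factor at each step, is precisely the formalization of that iteration. Your flagged subtlety (the lemma appends on the right, so the recursion must peel from the right) is exactly the point the paper's phrase ``starting on the left'' is gesturing at.
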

\begin{proof}
This follows by applying Lemma~\ref{lem:web:strictly-order-induction} repeatedly starting on the left. 
\end{proof}

\begin{Lemma}\label{lem:webs:F-reduced}
Let $\X$ be an $\F$-string. Then
$$\1_\eta \X \1_\eta  \dsss \big\{\A^{(\ell_1)} \cdots \A^{(\ell_r)}\1_\eta \bigr\}$$
where $r \ge 0$ and $\sum_i |\ell_i| \le w_{\X}$ for some fixed $w_{\X}$ which depends on $\X$.
\end{Lemma}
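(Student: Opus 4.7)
The plan is to first reduce to strictly ordered $\F$-strings via Corollary~\ref{cor:web:strictly-order}, so we may assume $\X = \F_{i_m}^{(a_m)} \cdots \F_{i_1}^{(a_1)}$ with $i_{j+1} \equiv i_j - 1 \pmod{n}$ and each $a_j \ne 0$. For $\1_\eta \X$ to be nonzero one needs $i_1 = n-1$, because $\F_j \1_\eta = 0$ for every $j \ne n-1$ (if $j \in I \setminus \{n-1\}$ then applying $\F_j$ requires position $j+1$ of the source weight to be positive, and $\F_0$ requires the first position to be positive). Strict ordering then forces the indices to cycle through $n-1, n-2, \ldots, 1, 0, n-1, \ldots$ reading right-to-left, so each consecutive block of $n$ operators forms a ``cycle'' of shape $\F_0^{(\cdot)} \F_1^{(\cdot)} \cdots \F_{n-1}^{(\cdot)}$.

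I would then induct on $\#_0 \X$, the number of complete cycles. The base case $\#_0 \X = 0$ is handled directly: the string has length at most $n-1$ and strictly decreasing indices, so tracing its action on $\eta$ shows that returning to $\eta$ forces all $a_j = 0$; thus $\X = \1_\eta$ and the empty product ($r=0$) suffices.

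For the inductive step, consider the rightmost cycle $\F_0^{(a_n)} \F_1^{(a_{n-1})} \cdots \F_{n-1}^{(a_1)}$. If $a_1 = \cdots = a_n = \ell$ this cycle literally equals $\A^{(\ell)}$, after which the weight returns to $\eta$; the remaining strictly ordered string has $\#_0 = \#_0 \X - 1$ and the inductive hypothesis applies directly to yield $\X \dsss \{ \A^{(\ell)} \A^{(\ell_2)} \cdots \A^{(\ell_r)} \1_\eta \}$.

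The hard part is the case of unequal exponents within the first cycle: then the intermediate weight after this cycle is some $\uk \ne \eta$, and no single $\A^{(\ell)}$ can be peeled off cleanly. I would address this by using Lemma~\ref{lem:web:sl3} (and Lemma~\ref{lem:web:EFcommute} if needed) to migrate exponents between the first cycle and the cycle immediately to its left, exploiting the global constraint that $\sum_{j : i_j = i} a_j$ is a constant independent of $i \in \hI$ (which is forced by $\1_\eta \X \1_\eta \ne 0$). A combinatorial argument then shows that, up to direct summands, $\X$ can be replaced by a finite sum of strictly ordered strings whose first cycle has balanced exponents, reducing to the previous case. The bound $w_\X$ can be taken to be the total exponent $\sum_j a_j$, which is controlled throughout these manipulations.
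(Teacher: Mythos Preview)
Your outline matches the paper's proof in its skeleton: reduce to strictly ordered strings, induct on $\#_0\X$, and peel off an $\A^{(\ell)}$ when the rightmost cycle has constant exponents. The gap is in the ``hard part,'' which you correctly flag but do not actually resolve.

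The paper handles the unbalanced rightmost cycle $\F^{(a_0,\dots,a_{n-1})}\1_\eta$ (with $a_0 \le \cdots \le a_{n-1}$, else the functor vanishes) by a \emph{secondary} induction on $\sum_j a_j$, and the reduction step is not the vague ``migrate exponents via Lemma~\ref{lem:web:sl3}'' that you propose. Let $j$ be the largest index with $a_j > a_{j-1}$. If $0 < j < n-1$, the intermediate weight $\uk$ reached after the rightmost cycle has $k_j = a_j - a_{j-1} > 0$ and $k_{j+1} = 0$, so $\1_\uk \dsss \F_j\E_j\1_\uk$. Inserting this and letting the $\E_j$ cancel one power of $\F_j^{(a_j)}$ decreases $a_j$ by one; the stray $\F_j$ is then absorbed into $\X'\F^{(b_0,\dots,b_{n-1})}$ via Lemma~\ref{lem:web:strictly-order-induction}, which either drops $\#_0$ (outer induction wins) or leaves it fixed (inner induction on $\sum a_j$ wins). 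Only the boundary case $j = n-1$ is handled with Lemma~\ref{lem:web:sl3}, and there the rearrangement touches $\F_0$ so one must separately check that $\#_0$ stays controlled.

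Your sketch is missing both the $\F_j\E_j$ insertion trick and any termination measure for the balancing process. Lemma~\ref{lem:web:sl3} alone redistributes exponents among three operators but does not obviously decrease anything, so ``a combinatorial argument then shows'' is hiding the entire content of the step. The global constraint you cite (equal column sums over each index $i$) is correct but static---it does not by itself furnish a descent.
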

\begin{proof}
By Corollary \ref{cor:web:strictly-order} we can assume that $\X$ is strictly ordered.  If $\#_0\X \le 1$ then $\1_\eta \X \1_\eta = 0$ is either zero or equal to $\1_\eta \X \1_\eta = \A^{(\ell)}\1_\eta$ for some $\ell$. So assume that $\#_0\X \ge 2$ and proceed by induction on $\#_0\X$. 

Write $\X \1_\eta = \X' \F^{(b_0,\dotsc,b_{n-1})}\F^{(a_0,\dotsc,a_{n-1})}\1_\eta$ for some strictly ordered $\F$-string $\X'$ with $a_j \ne 0$ and $b_j\ne 0$ for all $j$. Further we can assume that $a_0 \le a_1 \le \dotsb \le a_{n-1}$ as otherwise the functor vanishes. We will now induct on the sum $a_0 + \dotsb + a_{n-1}$.
    
If $a_0 = a_1 = \dotsb = a_{n-1}$, then $\F^{(a_0,\dotsc,a_{n-1})} = \A^{(a_0)}$ so that $\X'\F^{(b_0,\dotsc,b_{n-1})}\A^{(a_0)}$ and we are done by induction on $\#_0\X$. Otherwise let $j$ be the largest index such that $a_j \ne a_{j-1}$. We have to distinguish whether $j = n-1$ or not.

First assume that $0 < j < n-1$.  Let $\uk=(a_1-a_0,\dotsc,a_{n-1}-a_{n-2},N+a_0-a_{n-1})$, i.e.~such that $\1_{\uk}\F^{(a_0,\dotsc,a_{n-1})}\1_\eta$.  Then we have $k_j = a_j - a_{j-1} > 0$ and $k_{j+1} = a_{j+1} - a_j = 0$ so that $\1_\uk \dsss \F_i\E_i$:
\begin{align*}
    & \X'\F^{(b_0,\dotsc,b_{n-1})}\F^{(a_0,\dotsc,a_{n-1})}\1_\eta \\
    & \qquad \dsss \X'\F^{(b_0,\dotsc,b_{n-1})}\F_j\E_j\F^{(a_0,\dotsc,a_{n-1})}\1_\eta\\
    & \qquad = \bigr[\X'\F^{(b_0,\dotsc,b_{n-1})}\bigl]\F_j\F^{(a_0,\dotsc, a_{j-1}, a_j-1, a_{j+1},\dotsc,a_{n-1})}\1_\eta\\
    & \qquad \dsss \bigl\{ \Y\F^{(a_0,\dotsc, a_{j-1}, a_j-1, a_{j+1},\dotsc,a_{n-1})}\1_\eta \bigr\},
\end{align*}
where by Lemma~\ref{lem:web:strictly-order-induction} $\Y$ belongs to a finite set of strictly ordered $\F$-strings with $\#_0\Y \le \#_0\X' + 1 = \#_0\X-1$.
Thus either we are done by induction on $\#_0\X$, or otherwise by induction on $\sum a_j$. 
    
If $j = n-1$, then, using Lemma~\ref{lem:web:sl3}, we have
\begin{align*}
    & \X'\F^{(b_0,\dotsc,b_{n-1})}\F^{(a_0,\dotsc,a_{n-1})}\1_\eta \\
    & \qquad = \X'\F_0^{(b_0)}\dotsb\F_{n-1}^{(b_{n-1})}\F_0^{(a_0)}\dotsb\F_{n-1}^{(a_{n-1})}\1_\eta \\
    & \qquad = \X'\F_0^{(b_0)}\dotsb \bigl[\F_{n-1}^{(b_{n-1})}\F_0^{(a_0)}\F_{n-1}^{(a_{n-1}-a_{n-2})}\bigr]\F_{1}^{(a_1)}\dotsb\F_{n-2}^{(a_{n-2})}\F_{n-1}^{(a_{n-2})}\1_\eta \\
    & \qquad \dsss \bigl\{ \X'\F_0^{(b_0)}\dotsb\F_{n-2}^{(b_{n-2})}\F_0^{(\ell)}\F_{n-1}^{(b_{n-1} + a_{n-1}-a_{n-2})}\F_0^{(a_0-\ell)}\F_{1}^{(a_1)}\dotsb\F_{n-2}^{(a_{n-2})}\F_{n-1}^{(a_{n-2})}\1_\eta\bigr\}
\end{align*}
with $0 \le \ell \le a_0$.
If $\ell \ne 0$, we use Lemma~\ref{lem:web:strictly-order-induction} applied to $\bigl(\X'\F_0^{(b_0)}\dotsb\F_{n-2}^{(b_{n-2})}\bigr)\F_0^{(\ell)}$, to reduce each of the functors in the set to functors
$$\Y\F_{n-1}^{(b_{n-1} + a_{n-1}-a_{n-2})}\F^{(a_0-\ell,a_1,\dotsc,a_{n-2},a_{n-2})}\1_\eta$$
where $\Y$ is strictly ordered with $\#_0\Y \le \#_0\X' + 1 = \#_0\X-1$ (if $\ell = 0$ this is form is obtained trivially).
If  $\ell = a_0$, we can use Lemma~\ref{lem:web:strictly-order-induction} to reduce this further to strings with less $\F_0^{(\cdot)}$ than $\X$ and apply induction on $\#_0\X$.
Otherwise we can apply induction on $\sum a_j$.
\end{proof}

Lemma \ref{lem:webs:F-reduced} above proves Proposition~\ref{prop:highest} in the case of $\F$-strings. The general case follows by applying Lemma~\ref{lem:web:only-F}.

\section{t-structures on the symmetric side}\label{sec:sym}

In this section we work on the symmetric side. This means that we have a $(L\gl_n,\t)_{KM}$ action on a triangulated 2-category $\tK$ where the objects $\tK(\uk)$ are graded triangulated categories indexed by $\uk \in \bZ^n$ and where $\la 1 \ra = \{1\}$ is the internal grading. Since $[k]\la -k \ra = [k]\{-k\}$ the complexes $\T_i'$ now become $\T'_i \1_\uk = \T_i \1_\uk [k_i] \{-k_i\}$. Recall that $\mu$ denotes the weight $(0^{n-N},1^N)$ and $\eta$ the weight $(0,\dots,0,N)$ and that $\1_\uk = 0$ if $k_i < 0$ for some $i$ or $\sum_i k_i \ne N$. 

\begin{Theorem}\label{thm:sym:main}
Suppose that $\tK(\mu)$ is weakly generated by the image of $\tK(\eta)$ under the $L\gl_n$ action. Moreover, suppose that $\tK(\eta)$ is equipped with a t-structure such that for all $-N \le \ell \le N$ the 1-endomorphisms $\A^{(\ell)}$ of $\tK(\eta)$ are t-exact. Then one can uniquely extend this t-structure to all other categories $\tK(\uk)$ so that $\E_i$ and $\F_i$ are t-exact for all $i \in \hI$. Moreover, this t-structure is braid positive. 
\end{Theorem}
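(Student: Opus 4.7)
The strategy is to apply Polishchuk's theorem (Theorem~\ref{thm:t-structure:right-exact-induces}) to propagate the t-structure from $\tK(\eta)$ to every $\tK(\uk)$ along tree-like $1$-morphisms. The essential algebraic input is Proposition~\ref{prop:highest}: every $1$-endomorphism of $\tK(\eta)$ is a direct summand of a finite direct sum of internal shifts of compositions of the operators $\A^{(\ell)}$. Since by hypothesis each $\A^{(\ell)}$ is t-exact, since internal shifts $\la 1 \ra = \{1\}$ are t-exact (they do not shift cohomological degree), since compositions of t-exact functors are t-exact, and since direct summands of t-exact functors are t-exact (Lemma~\ref{lem:t-structure_and_direct_sums}), one concludes the key technical fact: \emph{every} $1$-endomorphism of $\tK(\eta)$ is t-exact.

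For each nonzero weight $\uk$, fix a tree $1$-morphism $\Xi_\uk\colon \tK(\uk) \to \tK(\eta)$ built from divided-power $\E_i$'s of the shape discussed in Section~\ref{sec:trees}. Then $\Xi_\uk \circ \Xi_\uk^L$ is a $1$-endomorphism of $\tK(\eta)$, hence t-exact by the previous paragraph. The conservativity/compactness condition needed for Polishchuk's theorem is verified at $\uk = \mu$ directly from the weak generation hypothesis (since $\Xi_\mu^L$ is itself a tree and therefore a summand of one of the generators); for general $\uk$ the weak generation propagates through the $\hgl_n$ action by adjunction, and then Lemma~\ref{lem:conservative_and_compactness} handles the compactness side. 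Thus Polishchuk produces a unique t-structure on $\tK(\uk)$ with $\tK(\uk)^{\ge 0} = \{X : \Xi_\uk(X) \in \tK(\eta)^{\ge 0}\}$ and $\Xi_\uk$ t-exact. Independence from the choice of tree follows from the web relations of Section~\ref{sec:trees}, which identify any two planar trees between the same weights up to direct summation of internal shifts.

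The t-exactness of $\E_i$ and $\F_i$ for $i \in \hI$ follows by comparing $\Xi_{\uk + \alpha_i} \circ \E_i$ to $\Xi_\uk$: both are tree $1$-morphisms from $\tK(\uk)$ to $\tK(\eta)$, so they differ by a $1$-endomorphism of $\tK(\eta)$, which is t-exact by the first paragraph; for $\F_i$ one applies adjunction. When $i = 0$ the annular wrap produces an extra $\A^{(\ell)}$ factor which is still t-exact by hypothesis. Uniqueness then follows from Lemma~\ref{lem:t-structures:inclusion-implies-identical}: any t-structure extending the one on $\tK(\eta)$ and making every $\E_i, \F_i$ t-exact must make every tree functor t-exact, so it must satisfy the same characterization of $\tK(\uk)^{\ge 0}$. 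Finally, braid positivity of the shifted generators $\T_i'$ follows because on the symmetric side (where $\la 1 \ra = \{1\}$) the defining Rickard complex lies in non-positive cohomological degrees with each term t-exact, so iterated cone formation keeps the convolution right t-exact.

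The main obstacle, I expect, is the careful bookkeeping around the affine generators $\E_0, \F_0$: identifying how $\Xi_{\uk + \alpha_0} \circ \E_0$ relates to $\Xi_\uk$ modulo a specific annular twist $\A^{(\ell)}$ requires matching up the tree combinatorics with the affine Dynkin wraparound, and ensuring that the conservativity/compactness hypothesis of Polishchuk's theorem can be propagated cleanly from the single weak generation assumption at $\mu$ to every weight space is the other likely technical sticking point. Once these are in place, uniqueness and braid positivity are comparatively formal consequences.
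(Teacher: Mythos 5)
The main gap is in the application of Polishchuk's theorem. You propose to take $\Xi_\uk$ to be a single planar-tree $1$-morphism $\tK(\uk) \to \tK(\eta)$ and feed it directly into Theorem~\ref{thm:t-structure:right-exact-induces}. For that you need $\Xi_\uk$ to be conservative, equivalently the image of $\Xi_\uk^L\colon \tK(\eta) \to \tK(\uk)$ must weakly generate $\tK(\uk)$, and this is false in general. Already for $n=N=2$ and $\uk = \mu = (1,1)$ the tree is $\F_1\1_{(1,1)}$, and $\F_1^R \F_1 \1_{(1,1)} \cong \E_1 \F_1 \1_{(1,1)} \la\cdot\ra$ with $\la (1,1), \alpha_1\ra = 0$, so there is no $\1_{(1,1)}$ summand; geometrically $\F_1$ is a twisted pushforward along the $\P^{m-1}$-bundle $\bY(1,1)\to\bY(2)$, which is not conservative. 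In the main geometric application (Theorem~\ref{thm:sym-app}) the proof there shows explicitly that the image of a planar tree $\tK(\eta)\to\tK(\mu)$ gives only the ``diagonal'' line bundles $\prod_i \sL_i$, and one must additionally apply the $\phi_i'$ to obtain the full weakly generating family $\prod_i \sL_i^{a_i}$. So the hypothesis that $\tK(\mu)$ is weakly generated by the \emph{image of $\tK(\eta)$ under the $\hgl_n$ action} cannot be collapsed to ``a single tree has weakly generating image,'' which is what your parenthetical about $\Xi_\mu^L$ being a summand of one of the generators is trying to do.

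The paper circumvents this by choosing $\psi\colon \tK(\eta)\to\tK(\uk)$ to be a large direct sum of $1$-morphisms whose combined image weakly generates, throwing in a planar tree as one summand so that $\psi^R\psi$ contains $\1_\eta$, applying Polishchuk to $\psi^R$, and then showing (via Lemmas~\ref{lem:t-structure_and_direct_sums} and~\ref{lem:t-structures:inclusion-implies-identical}, comparing $\psi$, $\psi'$, and $\psi\oplus\psi'$) that the resulting t-structure does not depend on the choice of $\psi$. Only after that independence is in hand does the tree become t-exact, as a direct summand of a valid $\psi^R$. Your characterization $\tK(\uk)^{\ge 0} = \{X : \Xi_\uk(X) \in \tK(\eta)^{\ge 0}\}$ and the uniqueness argument built on it inherit this defect: without conservativity of $\Xi_\uk$, that set may be strictly larger than the true $\tK(\uk)^{\ge 0}$ because it cannot see objects killed by $\Xi_\uk$. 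The remaining ingredients of your outline (t-exactness of all $1$-endomorphisms of $\tK(\eta)$ via Proposition~\ref{prop:highest}, t-exactness of $\E_i,\F_i$ by composing with tree functors, and braid positivity from the non-positive cohomological degrees and t-exact terms of the $\T_i'$ Rickard complex) match the paper's argument and are fine once the Polishchuk step is repaired.
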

\begin{proof}
First note that every $\tK(\uk)$ is weakly generated by the image of $\tK(\eta)$. This is because for each $\tK(\uk)$ we have a 1-morphism $\psi_\uk \colon \tK(\uk) \rightarrow \tK(\mu)$ corresponding to a ``planar tree'' (see Section \ref{sec:trees}). This 1-morphism has the property that $\psi_\uk^L \circ \psi_\uk$ contains at least one copy of the identity 1-morphism of $\tK(\uk)$. So if $X \in \tK(\mu)$ weakly generates then $\psi_\uk^L(X) \in \tK(\uk)$ also weakly generates since 
$$\Hom(\psi_\uk^L(X)[i],Y) = 0 \Leftrightarrow \Hom(X[i], \psi_\uk(Y)) = 0 \Leftrightarrow \psi_\uk(Y)=0 \Leftrightarrow Y=0.$$

Next we define the t-structure on $\tK(\uk)$. Choose a 1-morphism 
$$\psi\colon \tK(\eta) \to \tK(\uk)$$
whose image weakly generates $\tK(\uk)$ and such that $\psi^R \circ \psi$ contains at least one copy of the identity 1-morphism (this is possible since we can just add into $\psi$ a copy of the planar tree morphism from $\tK(\eta)$ to $\tK(\uk)$). Now $\psi^R \circ \psi$ is an endomorphism of $\tK(\eta)$ and hence, by Proposition~\ref{prop:highest},
$$\psi^R \circ \psi \dsss \{\A^{(\ell_1)} \cdots \A^{(\ell_k)}\}$$
for some finite collection of $\ell_1, \dots, \ell_k \in \bZ$. By assumption $\A^{(\ell_1)} \cdots \A^{(\ell_k)}$ is t-exact. Now, $\la 1 \ra = \{1\}$ is t-exact and so by Lemma~\ref{lem:t-structure_and_direct_sums} it follows that $\psi^R \circ \psi$ is t-exact. 

We can now apply Theorem~\ref{thm:t-structure:right-exact-induces} to 
$$\psi^R\colon \tK(\uk) \rightarrow \tK(\eta)$$ 
to obtain an induced t-structure on $\tK(\uk)$. Since the image of $\psi$ weakly generates $\tK(\uk)$, $\psi^R$ is conservative. Thus the t-structure on $\tK(\uk)$ is given by
    \begin{equation}\label{eq:sym:definition_of_tstructure}
    \begin{split}
    \tK(\uk)^{\le 0} & = \bigl\{ A \in \tK(\uk) : \psi^R(A) \in \tK(\eta)^{\le 0} \bigr\},\\
    \tK(\uk)^{\ge 0} & = \bigl\{ A \in \tK(\uk) : \psi^R(A) \in \tK(\eta)^{\ge 0} \bigr\}.
    \end{split}
    \end{equation}

The t-structure above does not depend on the choice of $\psi$ as long as it weakly generates. This is because given two choices $\psi$, $\psi'$ we can consider the t-structures defined by the 1-morphisms $\psi$ and $\psi \oplus \psi'$ respectively. Then, using (\ref{eq:sym:definition_of_tstructure}), Lemma~\ref{lem:t-structure_and_direct_sums} implies that $\tK(\uk)^{\le 0}$ and $\tK(\uk)^{\ge 0}$ for the latter are contained in the ones for the former. Hence by Lemma~\ref{lem:t-structures:inclusion-implies-identical} the two t-structures are equal. Likewise, $\psi \oplus \psi'$ and $\psi'$ define the same t-structure. Hence $\psi$ and $\psi'$ also induce the same t-structure.

From this it follows that $\E_i$ and $\F_i$ are exact. More precisely, to show $\E_i \1_\uk$ is t-exact it suffices to show that $\psi^R \E_i \1_\uk$ is t-exact. But this is t-exact by the discussion above. Moreover, $\F_i$ is exact since it is biadjoint to $\E_i$ (up to some $\{\cdot\}$ shift which is exact). 

To see uniqueness of the t-structure suppose that there exist another t-structure on all $\tK(\uk)$ extending the t-structure on $\tK(\eta)$ such that $\E_i$ and $\F_i$ are t-exact for all $i \in \hI$. Then the functors $\psi^R$ defined above will also be t-exact. Thus \eqref{eq:sym:definition_of_tstructure} shows that the two t-structures agree.

Finally, we have to show that for this t-structure the functors $\T_i'$ are right t-exact. We assume that $k_i \le k_{i+1}$ (the case $k_i \ge k_{i+1}$ is similar). Set $\A_j = \E_i^{(k_i-j)} \F_i^{(k_{i+1}-j)}$ for $j \ge 0$. Then $\T'_i \1_\uk$ is the convolution of the complex  
$$\A_{k_i} \to \dotsb \to \A_1 \to \A_0$$
where $\A_0$ is in cohomological degree zero. Since each $\A_i$ is t-exact it follows that $\T'_i$ is right t-exact. 
\end{proof}

\section{t-structures on the skew side}\label{sec:skew}

In this section we work on the skew side. This means that we have an $(L\gl_n,\t)_{KM}$ action on a triangulated 2-category $\K$ where the objects $\K(\uk)$ are graded triangulated categories indexed by $\uk \in \bZ^n$ and where $\la 1 \ra = [1]\{-1\}$ where $[1]$ is the cohomological grading and $\{1\}$ is the internal grading. Since $[k]\la -k \ra = \{k\}$ we now have $\T_i' \1_\uk = \T_i \1_\uk \{k_i\}$. Recall that $\mu$ denotes the weight $(0^{n-N},1^N)$ and that $\1_\uk = 0$ if $k_i < 0$ for some $i$ or $\sum_i k_i \ne N$. 

\begin{Theorem}\label{thm:skew:main}
Suppose we have a braid positive t-structure on 
$$\bigoplus_{\mu' \in W_n \cdot \mu} \K(\mu').$$ 
Then there exists a unique t-structure on $\bigoplus_\uk \K(\uk)$ determined by exactness of the 1-morphism $\psi\colon \K(\uk) \rightarrow \K(\mu)$ corresponding to a collection of planar trees. Moreover, this t-structure is braid positive.
\end{Theorem}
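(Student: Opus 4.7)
The strategy parallels the proof of Theorem~\ref{thm:sym:main}, but with the role of the highest weight $\eta$ replaced by the middle weight $\mu=(0^{n-N},1^N)$. For each $\uk$ with $\1_\uk\ne 0$, I fix a planar tree 1-morphism $\psi\colon \K(\uk)\to \K(\mu)$ and apply Polishchuk's theorem (Theorem~\ref{thm:t-structure:right-exact-induces}) with $\Phi=\psi$. This produces a t-structure on $\K(\uk)$ characterized by $\K(\uk)^{\ge 0}=\{A:\psi(A)\in\K(\mu)^{\ge 0}\}$, which in particular makes $\psi$ t-exact. Independence of the choice of planar tree, conservativity of $\psi$, and uniqueness of the t-structure are handled exactly as in the proof of Theorem~\ref{thm:sym:main}: any two planar trees between $\uk$ and $\mu$ differ by a direct sum of trees via the web relations of Section~\ref{sec:trees}, and in particular $\psi^L\psi\1_\uk$ contains the identity as a summand, so the arguments combining Lemmas~\ref{lem:t-structure_and_direct_sums} and~\ref{lem:t-structures:inclusion-implies-identical} apply without change.

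The main obstacle is verifying the Polishchuk hypothesis, namely that $\psi\circ\psi^L\1_\mu$ is right t-exact on $\K(\mu)$. Since $\psi$ is built only from generators $\F_i$ with $i\in I$, and $\psi^L$ only from $\E_i$ with $i\in I$ (up to internal shifts), the endomorphism $\psi\psi^L\1_\mu$ lies in the sub-2-category generated by the \emph{finite}-type Chevalley operators acting on the $W_n\cdot\mu$ orbit. The plan is to rewrite $\psi\psi^L\1_\mu$ as a convolution whose successive terms are compositions of the shifted finite braid generators $\T_i'$ for $i\in I$. The crucial input is equation~\eqref{eq:TE=FT},
\[\F_i^{(p)}\T_i'\1_\uk\cong \T_i'\E_i^{(p)}\1_\uk\la p(\la\uk,\alpha_i\ra+p)\ra,\]
which carries only an internal shift and no cohomological one. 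Combined with \eqref{eq:TiTjEi} and the Rickard-complex description of $\T_i'$, this identity (together with the $\sl_2$ and Serre relations of Lemmas~\ref{lem:web:EFcommute} and~\ref{lem:web:sl3}) lets one reorganize the $\F$'s and $\E$'s in $\psi\psi^L\1_\mu$ into a convolution of compositions $\T_{i_1}'\dotsb\T_{i_r}'$ acting on $\K(\mu)$ with purely internal intermediate shifts. Since the hypothesis guarantees that each $\T_i'$ is right t-exact on $\bigoplus_{\mu'\in W_n\cdot\mu}\K(\mu')$, and convolutions of right t-exact functors remain right t-exact, it follows that $\psi\psi^L\1_\mu$ is right t-exact as required.

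Finally, to extend braid positivity from $\bigoplus_{\mu'\in W_n\cdot\mu}\K(\mu')$ to all of $\bigoplus_\uk\K(\uk)$, fix $j\in\hI$ and a weight $\uk$. Since $\psi$ is t-exact, it suffices to show that $\psi\circ\T_j'\1_\uk\colon \K(\uk)\to\K(\mu)$ is right t-exact. Using the same sliding procedure as in the previous paragraph, one rewrites $\psi\circ\T_j'\1_\uk$ as a convolution of morphisms of the form $\psi''\circ\T_{i_1}'\dotsb\T_{i_r}'$, where $\psi''$ is a planar tree and the braid generators act on a category from the $\mu$-orbit. Each such composition is right t-exact by hypothesis, hence so is $\psi\circ\T_j'\1_\uk$, and therefore so is $\T_j'\1_\uk$. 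Uniqueness of the extended t-structure is then immediate from Lemma~\ref{lem:t-structures:inclusion-implies-identical}.
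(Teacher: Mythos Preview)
There is a genuine gap in your argument at the central step: the assertion that $\psi\psi^L\1_\mu$ can be ``reorganized into a convolution of compositions $\T'_{i_1}\dotsb\T'_{i_r}$'' is not substantiated, and in fact is already false in the one-step case. Take $\psi=\E_i\1_\uk$ with $k_i>1$, $k_{i+1}=0$; then $\psi\psi^L=\E_i\E_i^L\1_{\uk+\alpha_i}$. The paper's Lemma~\ref{lem:skew:T-squared-on-k1} shows that this endofunctor is related to $(\T'_i)^2$ not by a convolution but through a pair of distinguished triangles, the second of which has the form $\X\to\E_i\E_i^L\{2\}\to\E_i\E_i^L[2]$. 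Extracting right t-exactness of $\E_i\E_i^L$ from this requires Lemma~\ref{lem:t-structure:triangle-with-shifts}, which is a genuinely separate argument (iterating the boundary map until it vanishes); no amount of shuffling via \eqref{eq:TE=FT} or the Serre relations will present $\E_i\E_i^L$ as a convolution of right t-exact pieces. For longer trees the problem compounds: the inner $\E_{i_1}\E_{i_1}^L$ lives over an intermediate weight where no t-structure yet exists, so you cannot even invoke Corollary~\ref{cor:EE_L} there.

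This is why the paper does not attempt to apply Polishchuk's theorem to the full tree $\psi$ in one shot. Instead it runs a decreasing induction on $|\uk|$: at each stage one peels off a single $\E_i$, uses Corollary~\ref{cor:EE_L} (which needs braid positivity on $\K(\uk+\alpha_i)$, available by induction) to apply Theorem~\ref{thm:t-structure:right-exact-induces}, and then re-establishes braid positivity at the new level via Proposition~\ref{prop:braidpos1} using the commutation $\E_{i+1}\T'_i\cong\T'_i\T'_{i+1}\E_i(\T'_{i+1})^{-1}$. The affine generator $\T'_0$ is handled separately through $\R'$ in Section~5.3. Your braid-positivity paragraph has the same defect as the main step: the claimed rewriting of $\psi\circ\T'_j$ as a convolution of tree-times-braid pieces is asserted but not shown, and in the paper this step genuinely depends on the inductive hypothesis rather than on any formal manipulation.
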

\begin{Remark}
The 1-morphism $\psi\colon \K(\uk) \rightarrow \K(\mu)$ above is conservative because the composition $\psi^R \circ \psi$ contains at least one summand $\1_\uk$. Hence Lemma~\ref{lem:t-structure:conservative} implies that the t-structure in Theorem~\ref{thm:skew:main} is given by
\begin{align*}
        \K(\uk)^{\ge 0} &= \bigl\{ A \in \K(\uk) : \psi(A) \in \K(\mu)^{\ge 0} \bigr\} \\
        \K(\uk)^{\le 0} &= \bigl\{ A \in \K(\uk) : \psi(A) \in \K(\mu)^{\le 0} \bigr\}.
\end{align*}
\end{Remark}

In the rest of this section we will prove this theorem. 

\subsection{Some preliminaries}

Let us denote by $|\uk|$ the number of $k_i \ne 0$. The idea for proving Theorem \ref{thm:skew:main} is to use the categorical action to define a t-structure on all $\K(\uk)$ by using the given t-structure on $\K(\mu)$. We will do this by (decreasing) induction on $|\uk|$ 

\begin{Lemma} \label{lem:skew:T-squared-on-k1}
Consider $\uk$ with $k_i > 0$ and $k_{i+1} = 1$. Then we have distinguished triangles
\begin{align*}
& \T'_i \T'_i \1_\uk \to \1_\uk \{2\} \to \X \\
& \X \to \E_i \F_i \1_{\uk} [k_i] \{-k_i+2\} \to \E_i \F_i \1_{\uk}[k_i+2]\{-k_i\}.
\end{align*}
for some $\X \in \End(\K(\uk))$. 
\end{Lemma}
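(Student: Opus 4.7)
The approach is to unfold $\T'_i\T'_i\1_\uk$ via the Rickard complex defining $\T'_i$, exploiting the drastic truncation forced by $k_i > 0$ and $k_{i+1} = 1$.

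First, since $\la\uk,\alpha_i\ra = 1-k_i \le 0$, the Rickard complex for $\T_i\1_\uk$ contains terms $\F_i^{(j)}\E_i^{(k_i-1+j)}\1_\uk\la j\ra$ for $j \ge 0$. The hypothesis $k_{i+1} = 1$ forces $\E_i^{(m)}\1_\uk = 0$ for $m > k_i$, so only the terms $j = 0, 1$ survive. Incorporating the overall shift $[k_i]\la -k_i\ra = \{k_i\}$ (valid in the skew case) that converts $\T_i$ into $\T'_i$, one arrives at a distinguished triangle
\[ \T'_i\1_\uk \longrightarrow \E_i^{(k_i-1)}\1_\uk\{1\} \longrightarrow \F_i\E_i^{(k_i)}\1_\uk[1]. \]

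Second, apply the functor $\T'_i\1_{s_i\cdot\uk}$ to this triangle, and use the intertwining relation \eqref{eq:TE=FT}. Taking $p = k_i - 1$ in \eqref{eq:TE=FT} gives
\[ \T'_i\E_i^{(k_i-1)}\1_\uk \cong \F_i^{(k_i-1)}\T'_i\1_\uk, \]
since $(k_i-1)(\la\uk,\alpha_i\ra + k_i - 1) = 0$; a symmetric (adjoint) version of \eqref{eq:TE=FT} rewrites $\T'_i\F_i\E_i^{(k_i)}\1_\uk$ as $\E_i\F_i^{(k_i)}\T'_i\1_\uk$ up to an explicit shift.

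Third, unfold $\T'_i\1_\uk$ a second time inside the expressions $\F_i^{(k_i-1)}\T'_i\1_\uk$ and $\E_i\F_i^{(k_i)}\T'_i\1_\uk$. This produces a bicomplex whose four entries are (shifts of) the compositions
\[ \F_i^{(k_i-1)}\E_i^{(k_i-1)}\1_\uk,\; \F_i^{(k_i)}\E_i^{(k_i)}\1_\uk,\; \E_i\F_i^{(k_i)}\E_i^{(k_i-1)}\1_\uk,\; \E_i\F_i^{(k_i+1)}\E_i^{(k_i)}\1_\uk. \]
Apply Lemma~\ref{lem:web:EFcommute} to commute $\F_i$ past $\E_i$ in each composition; the constraint $k_{i+1} = 1$ forces $\F_i^{(m)}\1_\uk = 0$ for $m \ge 2$, so only summands of the form $\1_\uk$ and $\E_i\F_i\1_\uk$ (with explicit quantum-binomial shifts) survive in each entry.

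Fourth, assemble the surviving pieces. The various $\1_\uk$ contributions from different corners of the bicomplex combine to leave a single copy of $\1_\uk\{2\}$, yielding the map $\T'_i\T'_i\1_\uk \to \1_\uk\{2\}$ of Triangle~1. The remaining $\E_i\F_i\1_\uk$ contributions form a two-term complex with shifts $[k_i]\{-k_i+2\}$ and $[k_i+2]\{-k_i\}$, which assembles into $\X$ and yields Triangle~2. The connecting map between these two shifts is a degree-two endomorphism of $\E_i\F_i\1_\uk$; it is identified with the natural transformation induced by the generator $\theta$ via condition~(\ref{co:theta}).

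The main obstacle will be the bookkeeping in Step~3: carefully tracking the quantum-binomial multiplicities, cohomological and internal shifts, and signs through the bicomplex to verify that the $\1_\uk$ contributions collapse to exactly one surviving copy. Identifying the connecting map in Triangle~2 as the nonzero $\theta$-induced degree-two endomorphism is the most delicate step, and it will rely essentially on condition~(\ref{co:theta}) of the categorical $(\hgl_n,\theta)$-action.
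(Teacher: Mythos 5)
The paper proves this lemma with a single citation to \cite[Proposition~6.8]{C1}, where the homotopy equivalence $[\T'_i][\T'_i]\1_\uk \simeq [\1_\uk\{2\} \to \E_i\F_i\1_\uk[k_i]\{-k_i+2\} \to \E_i\F_i\1_\uk[k_i+2]\{-k_i\}]$ is established; the displayed triangles then fall out of the convolution formalism. Your proposal instead attempts a from-scratch derivation: unfold each $\T'_i$ via its Rickard complex (two nonzero terms because of $k_i > 0$ and $k_{i+1}=1$), conjugate the inner $\T'_i$ to the right with \eqref{eq:TE=FT}, then simplify the resulting bicomplex using the $\sl_2$ relations. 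That is a reasonable strategy, and Steps~1 and~2 are sound (modulo shift bookkeeping). But the proof as written has a genuine gap precisely where the content is.

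The crux is Step~4, which you yourself flag as ``the main obstacle.'' After applying the divided-power $\sl_2$ decompositions together with $\F_i^{(m)}\1_\uk = 0$ for $m\ge 2$, the four corners of the bicomplex contribute (up to internal shifts) $\1_\uk \oplus \bigoplus_{[k_i-1]}\E_i\F_i\1_\uk$ in cohomological position $0$, two copies of $\bigoplus_{[k_i]}\E_i\F_i\1_\uk$ in position $1$ (one coming from $\F_i^{(k_i-1)}\F_i \cong \bigoplus_{[k_i]}\F_i^{(k_i)}$, one from the $\E_i\F_i^{(k_i)}\E_i^{(k_i-1)}$ corner), and $\bigoplus_{[k_i+1]}\E_i\F_i\1_\uk$ in position $2$. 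For this to collapse to the claimed three-term complex with exactly one $\1_\uk$ and exactly two $\E_i\F_i$ summands, an enormous Gaussian elimination has to occur; this requires identifying which components of the differential $d$ in the bicomplex are isomorphisms, which in turn requires knowing these maps explicitly as compositions of units, counits and dots. None of this is verified, and it is exactly the computation packaged by \cite[Prop.~6.8]{C1}. Without it, the proposal does not establish the lemma. Two further imprecisions: (i) Lemma~\ref{lem:web:EFcommute} only gives the $\dsss$ relation, not the full canonical decomposition needed to name the summands and their differentials; (ii) the identification of the degree-two connecting map in Triangle~2 via condition~(\ref{co:theta}) is off target --- that condition governs the action of $\theta$ on the $\E_i^{(2)}\la\pm 1\ra$ summands of $\E_i\E_i$, not degree-two endomorphisms of $\E_i\F_i\1_\uk$. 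The relevant map here is induced by a dot-type $2$-morphism and must be pinned down through the actual Rickard complex differentials rather than by appeal to~(\ref{co:theta}).
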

\begin{proof}
This follows from Proposition 6.8 of \cite{C1} where it is shown that $[\T'_i] [\T'_i] \1_\uk$ is homotopic to a complex of the form 
$$\1_\uk \{2\} \rightarrow \E_i \F_i \1_\uk [k_i]\{-k_i+2\} \rightarrow \E_i \F_i \1_\uk [k_i+2]\{-k_i\}.$$
Note that the extra shift of $\{2\}$ here is just a result of our conventions for $\T'_i$. 
\end{proof}

\begin{Corollary}\label{cor:EE_L}
Suppose again that we have $\uk$ with $k_i > 0$ and $k_{i+1} = 1$. If $(\T_i')^2 \1_\uk$ is right t-exact then $\E_i \E_i^L \1_\uk$ is right exact.
\end{Corollary}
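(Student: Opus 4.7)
My plan is to combine the two distinguished triangles of Lemma~\ref{lem:skew:T-squared-on-k1} with a short descending induction on cohomological amplitude.

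First I would rewrite $\E_i\E_i^L\1_\uk$ so that it matches the shape of the objects appearing in Lemma~\ref{lem:skew:T-squared-on-k1}. Applying condition~\eqref{co:adj} to $\E_i\1_{\uk-\alpha_i}$, and using that $\la\uk-\alpha_i,\alpha_i\ra=\la\uk,\alpha_i\ra-2=(1-k_i)-2$, one gets $\E_i^L\1_\uk\cong\F_i\1_\uk\la k_i\ra$. Since we are on the skew side, $\la k_i\ra=[k_i]\{-k_i\}$, so writing $\mathcal{Y}:=\E_i\E_i^L\1_\uk$ gives $\mathcal{Y}\cong\E_i\F_i\1_\uk[k_i]\{-k_i\}$. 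Under this identification the second triangle of Lemma~\ref{lem:skew:T-squared-on-k1} reads
\[ \X \to \mathcal{Y}\{2\} \to \mathcal{Y}[2], \]
which upon rotation becomes the distinguished triangle
\[ \mathcal{Y}[1] \to \X \to \mathcal{Y}\{2\}. \]

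Next I would show that the endofunctor $\X$ is right t-exact. For $A\in\K(\uk)^{\le 0}$ the hypothesis gives $(\T_i')^2 A\in\K(\uk)^{\le 0}$, and $A\{2\}\in\K(\uk)^{\le 0}$ because the internal shift is t-exact. The first triangle of Lemma~\ref{lem:skew:T-squared-on-k1} then exhibits $\X A$ as the cone of a map between two objects of $\K(\uk)^{\le 0}$, so $\X A\in\K(\uk)^{\le 0}$ by the standard fact that $\K(\uk)^{\le n}$ is closed under cones.

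The final step is a descending induction on amplitude applied to the rotated second triangle $\mathcal{Y} A[1]\to\X A\to\mathcal{Y} A\{2\}$. Because we work in a bounded derived category there exists some integer $m$ with $\mathcal{Y} A\in\K(\uk)^{\le m}$. If $m\ge 1$, then $\mathcal{Y} A[1]\in\K(\uk)^{\le m-1}$ and $\X A\in\K(\uk)^{\le 0}\subseteq\K(\uk)^{\le m-1}$, so their cone $\mathcal{Y} A\{2\}$ lies in $\K(\uk)^{\le m-1}$; by t-exactness of $\{2\}$ this forces $\mathcal{Y} A\in\K(\uk)^{\le m-1}$. Iterating drives $m$ down to $0$, giving $\E_i\E_i^L A\in\K(\uk)^{\le 0}$ and hence right t-exactness of $\E_i\E_i^L\1_\uk$.

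The only slightly nontrivial move is the rotation that places $\X A$---whose t-structure position we control via the first triangle---in the middle of the second triangle; everything else reduces to closure of $\K(\uk)^{\le n}$ under cones and a one-line induction on amplitude.
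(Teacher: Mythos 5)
Your proof is correct, and the setup matches the paper's: you reduce the second triangle of Lemma~\ref{lem:skew:T-squared-on-k1} to the form $\X \to \mathcal{Y}\{2\} \to \mathcal{Y}[2]$ where $\mathcal{Y} = \E_i\E_i^L\1_\uk$, and you deduce right $t$-exactness of $\X$ from the first triangle exactly as the paper does. (A small bonus: the adjunction computation you carried out gives $\E_i^L\1_\uk \cong \F_i\1_\uk[k_i]\{-k_i\}$, which is the correct relation; the paper's displayed formula ``$\F_i\1_\uk \cong \E_i^L\1_\uk[k_i]\{-k_i\}$'' has the shifts on the wrong side, although the triangles it then writes down are the correct ones, so this is only a typo.) Where you genuinely diverge is in the final step. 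The paper invokes its Lemma~\ref{lem:t-structure:triangle-with-shifts}: given $X \to Y \to Y[k]$ with $X \in \sC^{\le 0}$, one iterates the second map $\phi\colon Y \to Y[k]$ using the octahedral axiom to show $\Cone(\phi^{2^n}) \in \sC^{\le -1}$, then uses nilpotence of $\phi$ (from $\Hom^i(Y,Y)=0$ for $i\gg 0$) to conclude. You instead run a direct descending induction on the cohomological amplitude $m$ with $\mathcal{Y}A \in \K(\uk)^{\le m}$, using only closure of $\K(\uk)^{\le n}$ under extensions and $t$-exactness of $\{2\}$. Both arguments are valid; yours is more elementary and avoids the octahedral manipulation, but it needs the category to be bounded, whereas the paper's lemma only needs the weaker finiteness hypothesis $\Hom^i(Y,Y) = 0$ for $i \gg 0$. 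In the geometric applications (bounded derived categories of coherent sheaves on the varieties $Y(\uk)$) both hypotheses hold, so the difference is one of generality rather than substance.
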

\begin{proof}
Since $\F_i \1_{\uk} \cong \E_i^L \1_\uk [k_i]\{-k_i\}$ we get distinguished triangles
\begin{align*}
& \T'_i \T'_i \1_\uk \to \1_\uk \{2\} \to \X \\
& \X \to \E_i \E_i^L \1_{\uk} \{2\} \to \E_i \E_i^L \1_{\uk} [2]. 
\end{align*}
From the first triangle it follows that $\X$ is right exact. From the second triangle it then follows by Lemma~\ref{lem:t-structure:triangle-with-shifts} that $\E_i \E_i^L \1_{\uk}$ is also right exact.
\end{proof} 

\subsection{Defining the t-structure on $\K(\uk)$}\label{sec:K}

We will now define the t-structure on every $\K(\uk)$. This will be done inductively and using only the $\gl_n$ part of the $L\gl_n$ action (the entire action will be dealt with subsequently). 

By assumption we have a t-structure on $\K(\uk)$ where $|\uk| = N$ (this is the base case). Suppose we have inductively constructed t-structures on all $\K(\uk)$ with $|\uk| > r$ and that $\T_i'$ for $i=1,\dots,n-1$ are right t-exact for these t-structures. Now consider some $\uk$ with $|\uk| = r$. By Lemma \ref{lem:canonical} we can assume that there exists an index $i$ with $k_i > 1$ and $k_{i+1}=0$. 

Consider the 1-morphism $\E_i \1_\uk$. By induction we have a t-structure on $\K(\uk+\alpha_i)$ so that $\T'_i$ is right t-exact for $i=1,\dots,n-1$. Corollary \ref{cor:EE_L} implies that $\E_i \E_i^L \1_{\uk+\alpha_i}$ is right t-exact. Subsequently, we get an induced t-structure on $\K(\uk)$ using Theorem \ref{thm:t-structure:right-exact-induces}. Moreover, since $\E_i \1_\uk$ is conservative in this case (since $\F_i \E_i \1_\uk$ contains several copies of the identity) we find that the t-structure is given by 
\begin{align*}
\K(\uk)^{\ge 0} &= \bigl\{ A \in \K(\uk) : \E_i(A) \in \K(\uk+\alpha_i)^{\ge 0} \bigr\} \\
\K(\uk)^{\le 0} &= \bigl\{ A \in \K(\uk) : \E_i(A) \in \K(\uk+\alpha_i)^{\le 0}\bigr\}.
\end{align*}

\begin{Lemma}
The t-structure on $\K(\uk)$ defined above is independent of any choices made along the way.
\end{Lemma}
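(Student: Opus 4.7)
The construction involves two types of choices at level $r = |\uk|$: first, a rearrangement of $\uk$ invoked via Lemma~\ref{lem:canonical}, and second, the selection of an index $i$ with $k_i > 1$ and $k_{i+1} = 0$. My plan is to prove independence from the choice of $i$ first, and then handle the rearrangement by essentially the same argument combined with the inductive braid positivity.

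Suppose $i \ne i'$ are two indices valid for the same $\uk$, i.e.\ $k_i, k_{i'} \ge 2$ and $k_{i+1} = k_{i'+1} = 0$. The crucial structural observation is that $|i - i'| \ge 2$: if $i' = i+1$ we would need $k_{i+1}$ to be simultaneously $0$ and at least $2$, and $i' = i-1$ fails by the symmetric argument. Consequently $\la \alpha_i, \alpha_{i'}\ra = 0$, and commutation of the two corresponding $\sl_2$-subalgebras (a consequence of condition~(\ref{co:EiFj})) yields a canonical isomorphism $\E_i \E_{i'} \1_\uk \cong \E_{i'} \E_i \1_\uk$.

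Moreover $i$ remains a valid index at the weight $\uk + \alpha_{i'}$ and $i'$ remains valid at $\uk + \alpha_i$, since the positions $i, i+1$ and $i', i'+1$ are disjoint and each root leaves the opposite pair of entries unchanged. The inductive hypothesis therefore applies at level $r+1$, so the t-structure on $\K(\uk + \alpha_i)$ may equivalently be characterized via $\E_{i'}$, and symmetrically for $\K(\uk + \alpha_{i'})$. Chaining the resulting characterizations yields
\begin{align*}
A \in \K(\uk)^{\ge 0} \text{ via } i
&\iff \E_i A \in \K(\uk+\alpha_i)^{\ge 0} \iff \E_{i'}\E_i A \in \K(\uk+\alpha_i+\alpha_{i'})^{\ge 0} \\
&\iff \E_i \E_{i'} A \in \K(\uk+\alpha_i+\alpha_{i'})^{\ge 0} \iff \E_{i'} A \in \K(\uk+\alpha_{i'})^{\ge 0} \\
&\iff A \in \K(\uk)^{\ge 0} \text{ via } i',
\end{align*}
and the identical argument handles $\K(\uk)^{\le 0}$.

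Independence from the rearrangement used to invoke Lemma~\ref{lem:canonical} follows by a variant of the same argument. Two rearrangements differ by a product of zero-swapping operators $\T'_j$ ($j \in I$), which square to the identity by Lemma~\ref{lem:canonical} and are right t-exact at level $r+1$ by the inductive braid positivity, hence t-exact there; the intertwiners in \eqref{eq:TiTjEi} then allow these $\T'_j$ to be moved past $\E_i$, so the t-structures induced by different rearrangements agree. The main obstacle in carrying out the proof is the careful bookkeeping between levels $r$, $r+1$, and $r+2$; the crucial enabler is the non-adjacency of any two valid indices, which bypasses any Serre-type relation between $\E_i$ and $\E_{i+1}$ and reduces independence to pure commutation.
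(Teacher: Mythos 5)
Your proposal takes a genuinely different route from the paper: a bottom-up induction that compares neighbouring steps of the construction, rather than the paper's top-down characterization. The first half is correct and a nice observation: two valid indices $i, i'$ at the same $\uk$ are automatically non-adjacent, so $\E_i$ and $\E_{i'}$ commute, and since each stays valid after the other is applied, the chain $\E_{i'}\E_i \cong \E_i\E_{i'}$ combined with the induction at levels $r+1$ and $r+2$ closes the loop.

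The second half (independence from the rearrangement used to invoke Lemma~\ref{lem:canonical}) is where the argument has a real gap. The claim that the zero-swapping $\T'_j$'s can be "moved past $\E_i$" via the intertwiners in \eqref{eq:TiTjEi} is not substantiated, and I don't think it goes through in the way you intend. The relation $\T'_i\T'_j\E_i \cong \E_j\T'_i\T'_j$ moves a \emph{pair} $\T'_i\T'_j$ past $\E_i$, changing $\E_i$ into $\E_j$ for adjacent $i,j$; it does not let a single zero-swapping $\T'_j$ slide past $\E_i$, and unwinding it introduces additional braid operators. More to the point, after applying $\E_i$ and landing at level $r+1$, the braid operators you need to compare the two options are not obviously zero-swapping there, so the "involution $+$ right t-exact $\Rightarrow$ t-exact" trick is not available. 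Concretely, for $\uk=(0,3,2,0,0,0)$ one option uses $\E_3$ directly (landing at $(0,3,1,1,0,0)$) while another rearranges via $\T'_2\T'_1$ to $(3,2,0,0,0,0)$ and applies $\E_2$ (landing at $(3,1,1,0,0,0)$); relating these two level-$(r+1)$ characterizations through genuinely zero-swapping operators requires care that the proposal does not supply.

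The paper's own argument avoids all this bookkeeping. It observes that, after unwinding the induction, the t-structure on $\K(\uk)$ is characterized by the condition $\psi(A) \in \K(\mu)^{\ge 0}$, where $\psi$ is the 1-morphism $\K(\uk)\to\K(\mu)$ built from the construction. Every constituent of $\psi$ — the $\E_i$'s and the zero-swapping $\T'_j$'s (which collapse to divided powers of $\E_j$ or $\F_j$) — involves only $i,j\in I$, so $\psi$ is a \emph{planar} 1-morphism. By the web calculus discussed in Section~\ref{sec:trees}, any planar 1-morphism from $\uk$ to $\mu$ is a direct sum of grading shifts of the unique planar tree, and direct sums and shifts do not change the induced t-structure. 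That single fact (uniqueness of planar tree 1-morphisms) is the load-bearing input, and it replaces both halves of your inductive argument at once. If you want to salvage the bottom-up approach you would essentially have to reprove this uniqueness statement by hand, which is where the second half of your proposal stalls.
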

\begin{proof}
By construction, the t-structure on $\K(\uk)$ is the unique t-structure with 
$$\K(\uk)^{\ge 0} = \big\{ A \in \K(\uk): \psi(A) \in \K(\mu)^{\ge 0} \bigr \}$$
where $\psi\colon \K(\uk) \rightarrow \K(\mu)$ is the functor associated to some planar tree from $\uk$ to $\mu$. As remarked in Section \ref{sec:trees} even though this tree is not unique all such trees give the same 1-morphisms. In this sense, the resulting t-structure on $\K(\uk)$ does not depend on any choices made in defining the other $\K(\uk')$ with $|\uk'| > r$. 
\end{proof}

\begin{Corollary}\label{cor:1}
If $k_i=0$ or $k_{i+1}=0$ then $\T_i' \1_\uk$ is t-exact. 
\end{Corollary}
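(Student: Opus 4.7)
The plan is to treat the two cases $k_i = 0$ and $k_{i+1} = 0$ separately, with the second reducing to the first via the key observation $(\T_i')^2 \1_\uk = \1_\uk$ from the proof of Lemma \ref{lem:canonical}. The first step in both cases is to observe that the Rickard complex for $\T_i \1_\uk$ collapses to a single term.

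If $k_i = 0$ and $\lambda = k_{i+1}$, then $\F_i^{(\lambda+j)} \1_\uk = 0$ for every $j \ge 1$, so the Rickard complex reduces to $\F_i^{(\lambda)} \1_\uk$ in cohomological degree zero. Since the overall shift $\{k_i\}$ is trivial, $\T_i' \1_\uk \cong \F_i^{(\lambda)} \1_\uk$ as functors $\K(\uk) \to \K(s_i \uk)$. Because $\F_i^{(\lambda)}$ for $i \in \{1, \dots, n-1\}$ is a planar tree move, for any planar tree $\psi_{s_i \uk} \colon \K(s_i \uk) \to \K(\mu)$ the composition $\psi_\uk \coloneqq \psi_{s_i \uk} \circ \F_i^{(\lambda)}$ is itself a planar tree from $\uk$ to $\mu$. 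Using the characterization of the t-structures given by the lemma preceding this corollary (combined with Lemma \ref{lem:t-structure:conservative} for the $\le 0$ side), I would then read off
\[
A \in \K(\uk)^{\ge 0} \iff \psi_\uk(A) \in \K(\mu)^{\ge 0} \iff \psi_{s_i \uk}(\T_i' A) \in \K(\mu)^{\ge 0} \iff \T_i' A \in \K(s_i \uk)^{\ge 0},
\]
and symmetrically for $\le 0$. Hence $\T_i'$ is t-exact.

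For the second case, assume $k_{i+1} = 0$ and set $k = k_i$. An analogous truncation yields $\T_i' \1_\uk \cong \E_i^{(k)} \1_\uk$, a functor $\K(\uk) \to \K(s_i \uk)$. Here $s_i \uk$ satisfies $(s_i \uk)_i = k_{i+1} = 0$, so the first case applied to $s_i \uk$ shows that $\T_i' \colon \K(s_i \uk) \to \K(\uk)$ is t-exact. Using $(\T_i')^2 \1_\uk = \1_\uk$, this t-exact functor is the inverse of $\T_i' \colon \K(\uk) \to \K(s_i \uk)$, and the inverse of a t-exact equivalence is t-exact, which completes the argument.

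The main point to verify carefully is that the composition $\psi_{s_i \uk} \circ \F_i^{(\lambda)}$ really is a planar tree from $\uk$ to $\mu$ in the sense of Section \ref{sec:trees}; this follows because $\F_i^{(\lambda)}$ for $i \in \{1, \dots, n-1\}$ does not involve $\F_0$ or $\E_0$, so composing with a planar $\psi_{s_i \uk}$ yields a planar 1-morphism, and the web relations guarantee that all such planar trees give the same 1-morphism up to isomorphism.
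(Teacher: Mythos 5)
Your proof is correct and follows essentially the same approach as the paper: the paper's one-line argument is that when $k_i = 0$ or $k_{i+1} = 0$, the composition of $\T_i' \1_\uk$ with a planar tree is again a planar tree, which you make precise by noting the Rickard complex collapses to a single divided power ($\F_i^{(k_{i+1})}$ or $\E_i^{(k_i)}$). For the case $k_{i+1}=0$ you detour through $(\T_i')^2 \1_\uk = \1_\uk$ and the first case, which works, though the more direct route --- observing that $\psi_{s_i\uk} \circ \E_i^{(k_i)}$ is itself planar, exactly as you argued with $\F_i$ --- would be shorter and is what the paper's uniform phrasing implicitly uses.
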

\begin{proof}
This is just because for 1-morphism $\psi\colon \K(s_i \cdot \uk) \rightarrow \K(\mu)$ corresponding to a planar tree, the composition $\psi \T_i' \1_\uk$ corresponds to another planar tree (since $k_i=0$ or $k_{i+1}=0$). 
\end{proof}

So now we can assume that we have t-structures for all $\K(\uk)$ with $|\uk| \ge r$. The next step is to show that $\T'_i$ for $i=1,\dots,n-1$ are right t-exact for these t-structures. 

\begin{Proposition}\label{prop:braidpos1}
Using the t-structure on $\K(\uk)$ defined above $\T'_i$ is right t-exact for $i=1,\dots,n-1$.
\end{Proposition}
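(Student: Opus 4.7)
The plan is to distinguish cases based on whether either of $k_j, k_{j+1}$ vanishes. In the easy case $k_j = 0$ or $k_{j+1} = 0$, Corollary~\ref{cor:1} already gives that $\T_j'\1_\uk$ is t-exact, so in particular right t-exact, and there is nothing further to prove.

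The nontrivial case is $k_j, k_{j+1} > 0$. Here I would use the construction of the t-structure from Section~\ref{sec:K}: after rearranging zeros via Lemma~\ref{lem:canonical} if necessary, there exists an index $i$ with $k_i > 1$ and $k_{i+1} = 0$ such that the conservative functor $\E_i\colon \K(\uk) \to \K(\uk + \alpha_i)$ induces the t-structure on $\K(\uk)$. Since $|\uk + \alpha_i| = r + 1$, the inductive hypothesis applies, so $\T_j'$ is right t-exact on $\K(\uk + \alpha_i)$. The positivity of $k_j, k_{j+1}$ combined with $k_{i+1} = 0$ forces $i \notin \{j-1, j\}$. If additionally $|i - j| \ge 2$, then $\la \alpha_i, \alpha_j \ra = 0$, so $\E_i$ and $\T_j'$ commute (up to the shift conventions from Section~\ref{sec:convolution}). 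For $A \in \K(\uk)^{\le 0}$ we have $\E_i A \in \K(\uk+\alpha_i)^{\le 0}$ by definition, and the inductive hypothesis gives $\E_i\T_j'A \cong \T_j'\E_i A \in \K(s_j\uk+\alpha_i)^{\le 0}$. Since $s_j$ fixes the entries at positions $i$ and $i+1$, the t-structure on $\K(s_j \uk)$ is also induced by $\E_i$, so I can conclude $\T_j'A \in \K(s_j\uk)^{\le 0}$.

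The hard part will be the remaining case $i = j+1$ (so $k_{j+1} > 1$, $k_{j+2} = 0$, and $k_j > 0$), where $\la \alpha_{j+1}, \alpha_j \ra = -1$ and $\E_{j+1}$ does not commute with $\T_j'$. My plan is to invoke the braid relation \eqref{eq:TiTjEi}, namely $\T_{j+1}' \T_j' \E_{j+1} \cong \E_j \T_{j+1}' \T_j'$, combined with a suitable choice of conservative functor on $\K(s_j\uk)$: when $k_j > 1$ one may also use $\E_{j+1}$ on $\K(s_j\uk)$ and rearrange via the braid relation to reduce to applications of the inductive hypothesis on both $\K(\uk + \alpha_{j+1})$ and $\K(s_j\uk + \alpha_{j+1})$, both at level $r+1$; when $k_j = 1$ one further applies Lemma~\ref{lem:canonical} (or uses the tree-morphism flexibility from Section~\ref{sec:trees}) to shift zeros and produce an alternative defining index $i'$ with $|i' - j| \ge 2$, reducing to the previous subcase. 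The main subtlety is to avoid introducing the inverse functor $(\T_j')^{-1}$ in these manipulations, since inverse braids are only left t-exact; this requires carefully orienting the braid relations so that every factor that touches a $\K^{\le 0}$-object remains right t-exact.
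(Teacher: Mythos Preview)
Your treatment of the cases $k_j=0$, $k_{j+1}=0$, and $|i-j|\ge 2$ is correct. The gap is in the hard case $i=j+1$. There you want to test $\T'_j A \in \K(s_j\uk)^{\le 0}$ via the defining functor $\E_{j+1}$ on $\K(s_j\uk)$, i.e.\ you need to control $\E_{j+1}\T'_j A$. But the relation you quote, $\T'_{j+1}\T'_j\E_{j+1}\cong \E_j\T'_{j+1}\T'_j$, has $\E_{j+1}$ applied \emph{first}, not last; the version with $\E_{j+1}$ on the left is $\E_{j+1}\T'_j\T'_{j+1}\cong \T'_j\T'_{j+1}\E_j$, and to extract $\E_{j+1}\T'_j$ from either one you must strip off a $\T'_{j+1}$ on one side, which forces $(\T'_{j+1})^{-1}$ into the expression --- exactly the inverse you say you want to avoid. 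Your sketch (``rearrange via the braid relation to reduce to the inductive hypothesis on $\K(\uk+\alpha_{j+1})$ and $\K(s_j\uk+\alpha_{j+1})$'') does not specify any manipulation that actually connects $\E_{j+1}\T'_j A$ to something controlled by $\E_{j+1}A$, and the further subcase $k_j=1$ only adds another layer of indirection without closing the gap.

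The paper's argument sidesteps all of this with one move you are missing: rather than fixing a defining index on the \emph{source} $\K(\uk)$, it arranges a zero at position $j+2$ (always possible via Corollary~\ref{cor:1}, moving a zero past positions $\ne j,j+1$) and then \emph{pre-composes} with the t-exact equivalence $\T'_{j+1}\colon \K(s_{j+1}\uk)\to\K(\uk)$. Now the braid relation fits on the nose:
\[
\E_{j+1}\,\T'_j\,\T'_{j+1}\,\1_{s_{j+1}\uk}\;\cong\;\T'_j\,\T'_{j+1}\,\E_j\,\1_{s_{j+1}\uk},
\]
where $\E_{j+1}$ is the conservative t-exact tree step on the target $\K(s_j\uk)$, $\E_j\1_{s_{j+1}\uk}$ is a t-exact tree step (since $(s_{j+1}\uk)_{j+1}=0$), and $\T'_j\T'_{j+1}$ now acts on $\K(s_{j+1}\uk+\alpha_j)$, which has $|\cdot|=r+1$, so the inductive hypothesis applies. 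No inverses, no case split on $k_j$, no separate commuting-index subcase.
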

\begin{proof}
Consider some $\T'_i \1_\uk$. Using Corollary \ref{cor:1} we can assume that (say) $k_{i+2}=0$. To show $\T'_i \1_\uk$ is right exact it suffices to show that $\E_{i+1} \T'_i \1_\uk \T'_{i+1}$ is, since $\1_\uk \T'_{i+1}$ is t-exact by Corollary \ref{cor:1}. But 
$$\E_{i+1} \T'_i \1_\uk \T'_{i+1} \cong \T'_i \T'_{i+1} \1_{s_{i+1} \cdot \uk + \alpha_i} \E_i \1_{s_{i+1} \cdot \uk}$$
and $\T'_i \T'_{i+1} \1_{s_{i+1} \cdot \uk + \alpha_i}$ is right exact by induction. Moreover, $\E_i \1_{s_{i+1} \cdot \uk}$ is t-exact by construction. The result follows. 
\end{proof}

\subsection{Extending to the affine case}

Recall that $\R' \1_\uk$ is defined as $\T'_{n-2} \dots \T'_1 \T'_0 \1_\uk$ if $k_n=0$ (if $k_n \ne 0$ then we use the finite braid group action to move a zero into that spot). 

\begin{Lemma}\label{lem:Rexact}
The functor $\R' \1_\uk\colon \K(\uk) \rightarrow \K(r \cdot \uk)$ is right t-exact.
\end{Lemma}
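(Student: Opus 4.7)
The plan is to argue by induction on $|\uk|$, combining the intertwining relation $\R' \E_i = \E_{i-1} \R'$ (equivalent to the commutation $\E_i \R' = \R' \E_{i+1}$ of Section~\ref{sec:braidaction}) with the braid positivity hypothesis of Theorem~\ref{thm:skew:main}. The underlying idea is that $\R'$ implements a cyclic relabeling of simple roots that is compatible with the inductive characterization of the t-structures from Section~\ref{sec:K}.

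For the base case $|\uk| = N$, the weight $\uk$ lies in $W_n \cdot \mu$, and so does every intermediate weight encountered in the decomposition $\R' = \T'_{n-2} \cdots \T'_1 \T'_0$. Since the braid-positivity hypothesis gives each $\T'_i$ (for $i \in \hI$) right t-exact on $\bigoplus_{\mu' \in W_n \cdot \mu} \K(\mu')$, the composition $\R' \1_\uk$ is right t-exact.

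For the inductive step, suppose $\R'$ is right t-exact at all $\uk'$ with $|\uk'| > r$, and consider $\uk$ with $|\uk| = r < N$ and $k_n = 0$. Since $r \le N - 1 < n - 1$, the weight $\uk$ has at least two zero entries, so by Lemma~\ref{lem:canonical} and Corollary~\ref{cor:1} we may rearrange (via t-exact finite braid moves preserving $k_n = 0$) to assume an index $i \in \{2, \ldots, n-1\}$ exists with $k_i > 1$ and $k_{i+1} = 0$. Then the t-structure on $\K(\uk)$ from Section~\ref{sec:K} is characterized by $A \in \K(\uk)^{\le 0} \iff \E_i A \in \K(\uk + \alpha_i)^{\le 0}$, and analogously the t-structure on $\K(r \cdot \uk)$ is characterized via $\E_{i-1}$ (since $(r \cdot \uk)_{i-1} = k_i > 1$, $(r \cdot \uk)_i = k_{i+1} = 0$, and crucially $i - 1 \in I$). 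Using $r \cdot \alpha_i = \alpha_{i-1}$ (valid for $i \ge 2$), the intertwining relation, and the inductive hypothesis applied at $\uk + \alpha_i$ (whose size is $r + 1$), we obtain: if $A \in \K(\uk)^{\le 0}$ then $\E_i A \in \K(\uk+\alpha_i)^{\le 0}$, so $\R' \E_i A = \E_{i-1} \R' A \in \K(r \cdot \uk + \alpha_{i-1})^{\le 0}$, whence $\R' A \in \K(r \cdot \uk)^{\le 0}$.

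The main obstacle will be the rearrangement step: we need an index $i \in \{2, \ldots, n-1\}$ (rather than $i = 1$, which would force $i - 1 = 0 \notin I$ and make the planar characterization of the t-structure on $\K(r \cdot \uk)$ unavailable), and we must verify that the finite braid identifications intertwine $\R'$ appropriately between the original and rearranged weights. Having at least two zeros and at least one entry $> 1$ in $\uk$ (which holds whenever $r < N$) is what makes the rearrangement possible via admissible t-exact moves.
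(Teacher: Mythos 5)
Your base case is correct and matches the paper's. Your inductive step, however, follows a genuinely different route from the paper: you rely on the $\E_i$-characterization of the t-structures from Section~\ref{sec:K} at both $\uk$ and $r\cdot\uk$, together with the intertwining $\E_{i-1}\R' \cong \R'\E_i$. Modulo the existence of a suitable index $i$, this step is correct.

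The gap is exactly where you flagged it, and I don't think your dismissal of the obstacle holds up. You need $i \in \{2,\dots,n-1\}$ with $k_i > 1$ and $k_{i+1}=0$ (so that both $i$ and $i-1$ lie in $I$). Take $\uk = (N-r+1, 1, \dots, 1, 0, \dots, 0)$ with $r-1$ ones and $k_n = 0$: this has at least two zeros and an entry $>1$, exactly the hypothesis you claim suffices, yet the only entry $>1$ sits at position $1$. Lemma~\ref{lem:canonical} only permutes zero positions, so it cannot move $k_1$ off position $1$. Even bubbling a zero into position $2$ and then applying $\T'_1$ doesn't help: the conjugation identity $\R'(\T'_1)^{-1} \cong (\T'_0)^{-1}\R'$ makes the compatibility of $\R'$ between $\uk$ and $s_1\uk$ hinge on t-exactness of $\T'_0$, which is an affine generator not covered by Corollary~\ref{cor:1} and is essentially what Lemma~\ref{lem:Rexact} is in the middle of establishing. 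So your ``admissible t-exact moves'' cannot manufacture the index $i\ge 2$ in this case, and the inductive step breaks down.

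For comparison, the paper avoids this by padding the weight with a leading zero: on a weight of the form $(0, k_1,\dots,k_n, 0)$ the generator $\T'_0$ inside $\R'$ becomes the identity (it exchanges two zeros), so this $\R'$ is automatically t-exact by Corollary~\ref{cor:1}. The paper then relates $\R'$ at $(k_1,\dots,k_n,0,0)$ to $\R'\R'$ on the padded weight and uses $\F_{n+1}\R'\R' \cong \R'\R'\F_1$ to transfer to a weight with strictly larger $|\cdot|$. That mechanism never needs the $\E_i$/$\E_{i-1}$ characterization at $\uk$ and $r\cdot\uk$, and hence never runs into the $i\ge 2$ requirement that blocks your argument.
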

\begin{proof}
We prove this by decreasing induction on $|\uk|$. The base case follows since 
$$\R' \1_{(k_1, \dots, k_n,0)} = \T'_{n-2} \dots \T'_1 \T'_0 \1_{(k_1, \dots, k_n,0)}$$ 
and each term on the right hand side is right exact by our basic hypothesis. 

To prove the induction step consider now the composition 
$$\K(0,k_1,\dots,k_n,0) \xrightarrow{\R' \R'} \K(k_2, \dots, k_n,0,0,k_1) \xrightarrow{\F_{n+1}} \K(k_2, \dots, k_n,0,1,k_1-1).$$
The first $\R' \1_{(0,k_1,\dots,k_n,0)}$ is t-exact since $\T'_0 \1_{(0,k_1,\dots,k_n,0)}$ is the identity (and hence exact). Thus the composition is right exact if and only if $\R' \1_{(k_1,\dots,k_n,0,0)}$ is right exact. On the other hand, the composition also equals $\R' \R' \1_{(1,k_1-1,\dots,k_n,0)} \F_1$ which is right exact by decreasing induction on $|\uk|$. 
\end{proof}

\begin{Corollary}\label{cor:Rexact}
The functor $\R' \1_\uk\colon \K(\uk) \rightarrow \K(r \cdot \uk)$ is t-exact. 
\end{Corollary}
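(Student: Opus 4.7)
By Lemma~\ref{lem:Rexact} we already have that $\R'$ is right t-exact, so it remains to establish left t-exactness. The plan is to exploit the fact that $\R'$ is an equivalence. Indeed, each $\T'_i$ is an auto-equivalence (its quasi-inverse being the Rickard complex in the opposite orientation), so $\R' = \T'_{n-2}\cdots\T'_1\T'_0$ is an equivalence $\K(\uk) \xrightarrow{\sim} \K(r\cdot\uk)$ with both left and right adjoints equal to its quasi-inverse $(\R')^{-1}$. By the standard adjunction principle (a left adjoint is left t-exact iff its right adjoint is right t-exact), showing $\R'$ is left t-exact reduces to showing that $(\R')^{-1}\colon \K(r\cdot\uk)\to \K(\uk)$ is right t-exact.

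To prove the latter, I would use that the permutation $r$ has order $n$, so $(\R')^n$ is an endo-equivalence of $\K(\uk)$. Iterating Lemma~\ref{lem:Rexact}, both $(\R')^{n-1}$ and $(\R')^n$ are right t-exact. If we can further verify that $(\R')^n$ is a pure internal grading shift $\{m\}$ for some integer $m$ — which is the expected consequence of the extended affine braid group relations together with the shift adjustment $\T'_i\1_\uk = \T_i\1_\uk\{k_i\}$ — then $(\R')^n$ is t-exact and so is its inverse $\{-m\}$. Consequently
\[
(\R')^{-1} \;\cong\; (\R')^{n-1}\circ ((\R')^n)^{-1}
\]
is a composition of a right t-exact and a t-exact functor, hence right t-exact.

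The main obstacle is the identification $(\R')^n\cong \{m\}$. This should follow either from a direct unwinding using the braid relations in \eqref{eq:TiTjEi} and the formula for $\T'_i$, or more cleanly from a known calculation of the rotation element in the extended affine braid group acting on the relevant weight lattice. Should that identification prove elusive, an alternative route is to run the inductive argument of Lemma~\ref{lem:Rexact} in reverse to establish right t-exactness of $(\R')^{-1}$ directly: since $r^{-1}\cdot\uk$ has the same combinatorial shape as $\uk$ (still containing at least one zero entry, by $n>N$), the same induction on $|\uk|$ goes through with $\R'$ replaced by its inverse rotation, using that $(\T'_0)^{-1}$ is likewise the identity on weights with $k_1=k_n=0$. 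Either route then combines with Lemma~\ref{lem:Rexact} to give the desired t-exactness of $\R'$.
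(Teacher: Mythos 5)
Your primary route hinges on the unproven identification $(\R')^n \cong \{m\}$, which you yourself flag as the main obstacle; this is a genuine gap, and it is not even clear a priori that $(\R')^n$ is a pure $\{\cdot\}$-shift rather than, say, a line bundle twist (the full twist in geometric realizations of affine braid group actions is often a Serre-functor-type operation). Your alternative route — running the induction of Lemma~\ref{lem:Rexact} for $(\R')^{-1}$ — is much closer to what the paper does, but the observation that makes it go through cleanly is slightly different from, and sharper than, what you write. The paper notes that in $\R'\1_\uk = \T'_{n-2}\cdots\T'_1\T'_0\1_\uk$, \emph{each} factor $\T'_j$, applied at its intermediate weight, exchanges a zero with something; hence by the computation in the proof of Lemma~\ref{lem:canonical} we have $(\T'_j)^2 = \1$ there, i.e.\ each $\T'_j$ is its own inverse. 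Consequently $(\R'\1_\uk)^{-1} = \T'_0\T'_1\cdots\T'_{n-2}\1_{r\cdot\uk}$ is a product of the very same involutive $\T'_j$'s (in reverse order), so the inductive argument of Lemma~\ref{lem:Rexact} applies verbatim to it and gives right t-exactness of $(\R')^{-1}$, hence left t-exactness of $\R'$.

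Your phrase ``$(\T'_0)^{-1}$ is likewise the identity on weights with $k_1 = k_n = 0$'' captures only a special case of this; the general statement needed is that every factor (not just $\T'_0$) in the expansion of $\R'$ is an involution on the weight it sees, which is what lets the entire argument, not just the base case, transfer to $(\R')^{-1}$. With that precision added, your fallback route matches the paper's proof; the $(\R')^n$ route should be dropped.
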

\begin{proof}
In Lemma \ref{lem:Rexact} we proved that $\R' \1_\uk$ is right exact. Note that in the description of $\R' \1_\uk$ as $\T'_{n-2} \dots \T'_1 \T'_0 \1_\uk$ each $\T'_i$ equals its inverse because it always exchanges a zero with something. Thus a similar argument to that used in Lemma \ref{lem:Rexact} also shows that $\R' \1_\uk$ is left exact. 
\end{proof}

\begin{Remark}
Notice that the proofs used in Lemma \ref{lem:Rexact} and Corollary \ref{cor:Rexact} also work to show that $\R' \1_{\uk}\colon \tK(\uk) \rightarrow \tK(r \cdot \uk)$ is t-exact on the symmetric side. 
\end{Remark}

Since any affine braid group element is a composition of $\R'$ and finite braid group elements, Proposition \ref{prop:braidpos1} and Corollary \ref{cor:Rexact} imply that our t-structures are braid positive. This completes the proof of Theorem \ref{thm:skew:main}.

\section{Combining the skew and symmetric sides}

So far we have discussed the symmetric and skew sides separately. In this section we assume that we have actions on $\oplus_\uk \tK(\uk)$ and $\oplus_\uk \K(\uk)$ as in sections \ref{sec:sym} and \ref{sec:skew}. Proposition \ref{prop:skewsym} below explains how to induce a braid positive t-structure on $\K(\uk)$ from one on $\tK(\uk)$. In this way our t-structures on $\oplus_\uk \tK(\uk)$ as well as $\oplus_\uk \K(\uk)$ are induced from a single, fixed t-structure on $\tK(\eta)$. 

\begin{Proposition} \label{prop:skewsym}
Suppose we have a conservative functor 
$$\iota: \bigoplus_{\mu' \in W_n \cdot \mu} \K(\mu') \rightarrow \bigoplus_{\mu' \in W_n \cdot \mu} \tK(\mu')$$ 
such that $\iota \circ \T'_i \cong \T'_i \circ \iota$ for all $i \in \hI$. Also suppose $\iota$ has a left adjoint $\iota^L$ such that $\iota \circ \iota^L$ is right t-exact. Then there exists a unique t-structure on $\oplus_{\mu' \in W_n \cdot \mu} \K(\mu')$ such that $\iota$ is t-exact. Moreover, this t-structure is braid positive.
\end{Proposition}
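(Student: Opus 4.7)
The plan is to apply Polishchuk's theorem (Theorem \ref{thm:t-structure:right-exact-induces}) to the functor $\iota$, using the t-structure on $\bigoplus_{\mu' \in W_n \cdot \mu} \tK(\mu')$ provided by Theorem \ref{thm:sym:main}. The hypotheses of Proposition \ref{prop:skewsym} have been tailored precisely to make this work: $\iota$ has a left adjoint $\iota^L$, and $\iota \circ \iota^L$ is right t-exact. Passing (as in the discussion after Theorem \ref{thm:t-structure:right-exact-induces}) to cocomplete enhancements via quasi-coherent (or ind-coherent) sheaves, the assumption that $\iota$ is conservative upgrades to the required detection property via Lemma \ref{lem:conservative_and_compactness}. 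Polishchuk's theorem then produces a t-structure on $\bigoplus_{\mu' \in W_n \cdot \mu} \K(\mu')$ with
\[
    \K(\mu')^{\ge 0} = \bigl\{ A \in \K(\mu') : \iota(A) \in \tK(\mu')^{\ge 0} \bigr\},
\]
and $\iota$ is t-exact for this t-structure.

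For uniqueness, I would invoke Lemma \ref{lem:t-structure:conservative}: since $\iota$ is conservative and t-exact, both $\K^{\le 0}$ and $\K^{\ge 0}$ are determined by the corresponding subcategories in $\tK$. Hence any t-structure making $\iota$ t-exact must coincide with the one just constructed.

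For braid positivity, I would argue as follows. By Theorem \ref{thm:sym:main}, the t-structure on $\bigoplus_{\mu'} \tK(\mu')$ is braid positive, so $\T'_i$ is right t-exact on the symmetric side. Given $A \in \K(\mu')^{\le 0}$, t-exactness of $\iota$ gives $\iota(A) \in \tK(\mu')^{\le 0}$, and braid positivity on $\tK$ yields $\T'_i(\iota(A)) \in \tK^{\le 0}$. Using the intertwiner $\iota \circ \T'_i \cong \T'_i \circ \iota$, this says $\iota(\T'_i A) \in \tK^{\le 0}$; by the conservative-and-t-exact Lemma \ref{lem:t-structure:conservative} again, $\T'_i A \in \K^{\le 0}$, as required.

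The only real subtlety is verifying that $\iota$ fits into Polishchuk's framework of cocomplete triangulated categories with a left adjoint; in concrete geometric examples this will be automatic from the quasi-coherent extension discussed after Theorem \ref{thm:t-structure:right-exact-induces}. Beyond that check, the proof is a formal consequence of Polishchuk's theorem together with conservativity and the intertwining relation.
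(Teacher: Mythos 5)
Your proposal is correct and follows essentially the same route as the paper: existence comes from Polishchuk's Theorem~\ref{thm:t-structure:right-exact-induces}, uniqueness from Lemma~\ref{lem:t-structure:conservative} applied to the conservative t-exact functor $\iota$, and braid positivity from the intertwining relation $\iota \circ \T'_i \cong \T'_i \circ \iota$ together with braid positivity on the symmetric side (the paper phrases this last point more tersely as "$\T'_i$ is right t-exact iff $\iota \circ \T'_i$ is," which is exactly your detection argument). Your extra remark about passing to cocomplete enhancements matches the standing convention the paper sets out right after Theorem~\ref{thm:t-structure:right-exact-induces}, so nothing is missing.
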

\begin{proof}
Existence of a t-structure so that $\iota$ is t-exact follows from Theorem~\ref{thm:t-structure:right-exact-induces}. On the other hand, since $\iota$ is conservative, $\T'_i$ acting on $\oplus_{\mu' \in W_n \cdot \mu} \K(\mu')$ is right t-exact if and only if $\iota \circ \T'_i \cong \T'_i \circ \iota$ is right t-exact. The result follows since $\T'_i$ acting on $\tK$ is right t-exact. 
\end{proof}

Combining Proposition \ref{prop:skewsym} with the construction from Section \ref{sec:K} we get the following Corollary. 

\begin{Corollary}\label{cor:skewsym}
Under the hypothesis of Proposition \ref{prop:skewsym}, there exists a unique t-structure on $\oplus_\uk \K(\uk)$ such that
\begin{enumerate}
\item $\iota: \oplus_{\mu' \in W_n \cdot \mu} \K(\mu') \rightarrow \oplus_{\mu' \in W_n \cdot \mu} \tK(\mu')$ is t-exact and
\item the 1-morphism $\K(\uk) \rightarrow \K(\mu)$ induced by a collection of planar trees is t-exact.
\end{enumerate}
Moreover, this t-structure on $\oplus_\uk \K(\uk)$ is braid positive.
\end{Corollary}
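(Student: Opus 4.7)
The plan is to apply the two preceding results in sequence. First, Proposition~\ref{prop:skewsym} produces a unique braid positive t-structure on $\oplus_{\mu' \in W_n \cdot \mu} \K(\mu')$ for which $\iota$ is t-exact. This is exactly the hypothesis needed to invoke Theorem~\ref{thm:skew:main}, which then yields a t-structure on $\oplus_\uk \K(\uk)$ determined by t-exactness of the planar tree functors $\psi \colon \K(\uk) \to \K(\mu)$. By that theorem this extended t-structure is automatically braid positive, which gives the final assertion of the Corollary, and the characterization by planar trees is precisely condition (2).

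The only nontrivial point is to check that the extended t-structure restricted to $\oplus_{\mu' \in W_n \cdot \mu} \K(\mu')$ coincides with the one coming from Proposition~\ref{prop:skewsym}, so that condition (1) (t-exactness of $\iota$) also holds. For $\uk = \mu$ the planar tree is the identity, so the t-structure on $\K(\mu)$ is unchanged. For $\mu' \in W_n \cdot \mu$ with $\mu' \ne \mu$, a planar tree from $\mu'$ to $\mu$ is, by Lemma~\ref{lem:canonical}, a composition of finite braid generators $\T_i'$ (with $i \in I$) which exchange entries one of which is zero. Each such $\T_i'$ is right t-exact on $\oplus_{\mu' \in W_n \cdot \mu} \K(\mu')$ by braid positivity, and since $(\T_i')^2 \1_{\mu'} \cong \1_{\mu'}$ in this situation, it is also left t-exact, hence t-exact. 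Thus the planar tree functor $\K(\mu') \to \K(\mu)$ is t-exact, so the two t-structures on $\K(\mu')$ agree.

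Uniqueness is then a clean two-step argument. Any t-structure on $\oplus_\uk \K(\uk)$ satisfying (1) restricts on $\oplus_{\mu' \in W_n \cdot \mu} \K(\mu')$ to one for which $\iota$ is t-exact, and so by the uniqueness part of Proposition~\ref{prop:skewsym} it coincides there with the t-structure just described; given also (2), the uniqueness part of Theorem~\ref{thm:skew:main} forces it to coincide globally with the extended t-structure.

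I do not expect a substantive obstacle: the Corollary is a bookkeeping combination of Proposition~\ref{prop:skewsym} and Theorem~\ref{thm:skew:main}. The main point to be careful about is the compatibility check in the middle paragraph, namely that the planar tree functors between weight spaces in $W_n \cdot \mu$ really are given by the finite braid group elements that have already been shown to be t-exact there.
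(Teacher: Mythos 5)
Your proof is correct and takes essentially the same approach as the paper, which dispenses with the Corollary in one line ("Combining Proposition~\ref{prop:skewsym} with the construction from Section~\ref{sec:K}"). You have usefully made explicit the one point the paper leaves implicit: that the t-structure produced by Theorem~\ref{thm:skew:main} really does restrict on $\oplus_{\mu' \in W_n\cdot\mu}\K(\mu')$ to the one coming from Proposition~\ref{prop:skewsym}, because the planar tree from $\K(\mu')$ to $\K(\mu)$ is a composite of finite $\T_i'$s that swap a zero entry, and each such $\T_i'$ is t-exact (being right t-exact by braid positivity and its own inverse by Lemma~\ref{lem:canonical}). The two-step uniqueness argument via the uniqueness clauses of Proposition~\ref{prop:skewsym} and Theorem~\ref{thm:skew:main} is also exactly what is intended.
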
 

\begin{Remark}
Note that in Proposition \ref{prop:skewsym} we only require a functor $\iota$ for the middle weight $\mu$. In some cases, like our examples in the next section, there are also natural functors $\K(\uk) \rightarrow \tK(\uk)$ for arbitrary $\uk$. But it seems that these functors are (in general) not as well behaved. 
\end{Remark}

\section{Examples of categorical actions}\label{sec:examples}

\subsection{Notation}\label{sec:notation}

All the varieties that we consider in this section are naturally equipped with a $GL_m[[z]] \rtimes \C^\times$ action. Let us fix a subgroup $H \subseteq GL_m[[z]]$. 

For a variety equipped with a $H \rtimes \C^\times$ action we will denote by $D(X)$ the bounded derived category of $H \rtimes \C^\times$ equivariant coherent sheaves on $X$. We denote by $\O_X \{k\}$ the structure sheaf of $X$ with non-trivial $\C^\times$ action of weight $k$.

For two smooth varieties $X,Y$ as above and an object $\sP \in D(X \times Y)$ whose support is proper over $Y$ (we will always assume this is the case from hereon) we obtain an induced functor $\Phi_{\sP}: D(X) \rightarrow D(Y)$ via $(\cdot) \mapsto \pi_{2*}(\pi_1^* (\cdot) \otimes \sP)$ where $\pi_1,\pi_2$ are the obvious projections. We say $\sP$ is the kernel which induces $\Phi_{\sP}$.

The right and left adjoints of $\Phi_{\sP}$ are induced by $\sP^R \coloneqq \sP^\vee \otimes \pi_2^* \omega_X [\dim(X)]$ and $\sP^L \coloneqq \sP^\vee \otimes \pi_1^* \omega_Y [\dim(Y)]$ respectively. Moreover, if $\sQ \in D(Y \times Z)$ then $\Phi_{\sQ} \circ \Phi_{\sP} \cong \Phi_{\sQ * \sP}\colon D(X) \rightarrow D(Y)$ where $\sQ * \sP = \pi_{13*}(\pi_{12}^* \sP \otimes \pi_{23}^* \sQ)$ is the convolution product.

\subsection{The 2-category $\tK_{\Gr,m}^{n,N}$}\label{sec:cattk}

We first recall the 2-category $\tK_{\Gr,m}^{n,N}$ which is most closely related to the geometry of the Beilinson--Drinfeld Grass\-mann\-ian. See \cite{CK4} for details. 

Fix $m \in \N^{\ge 1}$. For a sequence $\uk = (k_1, \dots, k_n)$ define the varieties 
\begin{equation*}
\bY(\uk) \coloneqq \{ \C[z]^m = L_0 \subseteq L_1 \subseteq \dots \subseteq L_n \subseteq \C(z)^m : z L_i \subset L_i, \dim(L_i/L_{i-1}) = k_i \}
\end{equation*}
where the $L_i$ are complex vector subspaces. These spaces are smooth \cite[Section 3]{CK4} and $\dim \bY(\uk) = m \sum_i k_i$. They are equipped with a natural action of $GL_m[[z]] \rtimes \C^\times$ where the $\C^\times$ action is induced by $t \cdot z = t^2 z$. As discussed above, we fix a subgroup $H \subseteq GL_m[[z]]$. Everything we consider from now on will be $H \rtimes \C^\times$ equivariant.

There exist natural vector bundles $\sV_i$ on $\bY(\uk)$ whose fiber over a point $L_\bullet$ is the vector space $L_i/L_0$. There are also correspondences given by 
\begin{equation}\label{eq:corr}
\bY_i^r(\uk) = \{ (L_\bullet, L'_\bullet) : L_j = L_j' \text{ for } j \ne i, L_i' \subset L_i \} \subset \bY(\uk) \times \bY(\uk+r\alpha_i).
\end{equation}
We take $\tK^{n,N}_{\Gr,m}$ to be the 2-category whose objects are $D(\bY(\uk))$ with $\sum_i k_i = N$, the 1-morphisms are kernels inside $D(\bY(\uk) \times \bY(\uk'))$ and the 2-morphisms are maps between kernels. 

The main result in \cite{CK4} defines an $(\Lsl_n, \t)$ action on $\tK_{\Gr,m}^{n,N}$ by using the correspondences in \eqref{eq:corr}. In fact, in \cite[Section 5.3]{CK4} one discusses a shifted action using generators $\E'_{i,1}$ and $\F'_{i,-1}$ rather than $\E_{i,1}$ and $\F_{i,-1}$ (the two differ by a grading shift). It is this shifted action that we will use from here on (see also Remark \ref{rem:primes}).

Following definitions \eqref{eq:T0def} and \eqref{eq:E0def} from the appendix, we define 
\begin{equation}\label{eq:T'0def}
\T_0' \coloneqq \phi_1' \dots \phi_{n-1}' (\T'_1 \T'_2 \dots \T'_{n-1} \dots \T'_2 \T'_1)^{-1}
\end{equation}
\begin{equation}\label{eq:E'0def}
\E_0 \coloneqq (\T'_0 \T'_1)^{-1} \E_1 \T'_0 \T'_1 \ \ \text{ and } \ \ \F_0 \coloneqq (\T'_0 \T'_1)^{-1} \F_1 \T'_0 \T'_1. 
\end{equation}
Here $\phi_i' = \T'_i \T_{i,1}'$ and $\T'_{i,1}$ is the analogue of $\T_i'$ where instead of $\E_i,\F_i$ we use $\E'_{i,1},\F'_{i,-1}$. 

\begin{Theorem}\label{thm:action1}
The functors $\E_i,\F_i$ for $i \in I$ together with $\E_0$ and $\F_0$ defined above induce an $(L\gl_n,\t)_{KM}$ action on $\tK_{\Gr,m}^{n,N}$ where the map $\spn\{\alpha_i: i \in \hI \} \rightarrow \End^2(\1_\uk)$ is defined as the composition $\spn\{\alpha_i: i \in \hI \} \xrightarrow{p} \spn\{\alpha_i: i \in I \} \rightarrow \End^2(\1_\uk)$ where 
\begin{equation}\label{p:map}
p(\alpha_i) \coloneqq \begin{cases} \alpha_i & \text{ if } i \in I \\ - \sum_{j \in I} \alpha_j & \text{ if } i = 0 \end{cases} 
\end{equation}
\end{Theorem}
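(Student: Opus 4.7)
The plan is to reduce Theorem \ref{thm:action1} to the general categorical construction provided in Appendix \ref{sec:appdrinfeld}. That appendix takes as input an $(L\sl_n, \t)$ action satisfying the hypotheses already verified in \cite{CK4} for $\tK_{\Gr,m}^{n,N}$, and produces an $(\hgl_n, \t)$ action via precisely the formulas \eqref{eq:T'0def} and \eqref{eq:E'0def}. Once this reduction is in place, the task becomes verifying that the seven axioms from Section \ref{sec:action} hold for the new generators $\E_0, \F_0$ and that the 2-morphism structure assembles correctly through the map $p$.

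The key observation driving the verification is that $\E_0$ and $\F_0$ are defined as conjugates of $\E_1$ and $\F_1$ by the auto-equivalence $\T_0' \T_1'$, where $\T_0'$ is itself assembled from the $\T_i'$ and $\T_{i,1}'$ produced by the $L\sl_n$ action. Because conjugation by an equivalence preserves all 2-categorical relations, the categorical $\sl_2$ relations \ref{co:EF} and the adjunction relations \ref{co:adj} for the pair $(\E_0, \F_0)$ are inherited immediately from those for $(\E_1, \F_1)$, which are part of the given $(L\sl_n, \t)$ action. Condition \ref{co:EiFj} for a mixed pair $(\E_0, \F_j)$ or $(\F_0, \E_j)$ with $j \in I$ I would handle by repeatedly applying the intertwiner \eqref{eq:TiTjEi} together with the finite braid relations among the $\T_i'$; for $j \in \{2, \dots, n-2\}$ this reduces to the known commutation $\E_1 \F_j \cong \F_j \E_1$, while for $j \in \{1, n-1\}$ it reduces to the $\sl_2$-Serre statements already encoded in condition \ref{co:EF}. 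Conditions \ref{co:hom1}, \ref{co:vanish1}, and \ref{co:new} are immediate: the first is inherited from the $(L\sl_n,\t)$ action, while the last two follow automatically from the standing assumption that $\1_\uk = 0$ unless all $k_i \ge 0$ and $\sum_i k_i = N$, as remarked just after Section \ref{sec:action}.

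The main obstacle will be condition \ref{co:theta} for $i = 0$ together with the precise claim that the 2-morphism map factors through $p$. Here one must check that the dot $\t$, when transported by conjugation through $\T_0' \T_1'$, induces the required nonzero map between the summands $\E_0^{(2)} \la 1 \ra$, and that the composite linear map $\spn\{\alpha_i : i \in \hI\} \to \End^2(\1_\uk)$ really coincides with $p$ composed with the structure map of the finite action. This ultimately reflects the fact that the imaginary root $\delta = \alpha_0 + \sum_{j \in I} \alpha_j$ acts trivially at level zero, so that $\alpha_0$ can only act as $-\sum_{j \in I} \alpha_j$ on 2-endomorphisms. I expect this identification to be the most delicate point, as it requires tracking the 2-morphism structure through the compound definition of $\T_0'$ in \eqref{eq:T'0def}, but it should emerge from the general appendix result together with a direct inspection of the dot on the top term of the Rickard complexes defining the relevant $\T_i'$.
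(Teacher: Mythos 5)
Your reduction to the appendix and your triage of the seven axioms match the paper's strategy: Corollary~\ref{cor:app1} (adapted to the primed generators $\E'_{i,1}, \F'_{i,-1}$, $\T'_i$) supplies conditions (\ref{co:adj}), (\ref{co:EF}) and (\ref{co:EiFj}), while (\ref{co:hom1}), (\ref{co:vanish1}) and (\ref{co:new}) are inherited or trivial, leaving only (\ref{co:theta}) to check. You also correctly identify (\ref{co:theta}) for $i=0$ as the genuine content, you rightly use that $\E_0 \1_\uk \E_0$ is conjugate to $\E_1 \1_{s_0 s_1 \cdot \uk} \E_1$, and your heuristic ``$\delta$ acts by zero at level zero, so $\alpha_0$ must act as $-\sum_{j\in I}\alpha_j$'' correctly predicts the formula for $p$. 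So the skeleton is right.

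The gap is in the mechanism you propose for finishing condition (\ref{co:theta}). You suggest ``direct inspection of the dot on the top term of the Rickard complexes,'' but this is not how the transport of $\gamma(\theta)$ through $\T'_0\T'_1$ is actually controlled, and it is not something that follows formally from the $(L\gl_n,\t)$ axioms. The missing ingredient is Lemma~\ref{lem:Taction1}, which is a \emph{geometric} statement specific to $\tK^{n,N}_{\Gr,m}$: it shows that the braid equivalences $\T'_i$ act on $\End^2(\1_\uk) \cong \spn\{\tr_j\}$ by the standard Weyl reflections (so $T'_i(\gamma(\theta)) = \gamma(T_i\theta)$), and that $\phi'_i$ acts trivially on $\End^2(\1_\uk)$. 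This is proved by identifying $\End^2(\1_\uk)$ with functions on $\bA_\uk$ via the proper map $\ch\colon \bY(\uk)\to\bA_\uk$, restricting $\T'_i$ to the open locus where $z$ has distinct eigenvalues (where it is the graph of a swap), and using that $\phi'_i$ is tensoring by a line bundle. Without this lemma you cannot assert that $(T'_0T'_1)(\gamma(\theta)) = \gamma(T_1T_0(\theta))$, which is the step that converts the 2-categorical question into the Weyl-group computation $T_1T_0(\alpha_1) = T_1\phi_1\dotsm\phi_{n-1}(T_1\dotsm T_{n-1}\dotsm T_1)(\alpha_1) = -\sum_{i\in I}\alpha_i$. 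You should either prove an analogue of Lemma~\ref{lem:Taction1} or cite it explicitly, and then do the finite Weyl computation to confirm the formula for $p(\alpha_0)$; the ``top term of the Rickard complex'' route is not fleshed out and is not what closes the argument here.
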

\begin{proof}
It is not difficult to check that the relations from Corollary \ref{cor:app1} hold despite using $\E'_{i,1}, \F'_{i,-1}$ instead of $\E_{i,1}, \F_{i,-1}$ and $\T'_i$ instead of $\T_i$. This is just a question of the grading/cohomological shifts matching up. For example, it is an elementary exercise to check that, if $\la i,j \ra = -1$, then 
$$\T_i^{-1} \E_{j,1} \T_i \cong \T_j^{-1} \E_{i,1} \T_j \Rightarrow (\T'_i)^{-1} \E'_{j,1} \T'_i \cong (\T'_j)^{-1} \E'_{i,1} \T'_j.$$ 

This takes care of all the conditions for having an $(L\gl_n,\t)_{KM}$ action with the exception of \eqref{co:theta}. In particular, it remains to show that $\E_0^2 \cong \E_0^{(2)} \{-1\} \oplus \E_0^{(2)} \{1\}$ and that $\theta \in \spn\{\alpha_i: i \in \hI \}$ induces a zero map between the summands $\E_0^{(2)} \{1\}$ if and only if $\la \theta, \alpha_0 \ra = \la \theta, - \sum_{i \in I} \alpha_i \ra = 0$.

To clarify notation let us denote by $\gamma$ the maps $\spn\{\alpha_i: i \in \hI \} \rightarrow \End^2(\1_\uk)$. Recall that $\E_0 = (\T'_0 \T'_1)^{-1}  \E_1 \T'_0 \T'_1$. Now consider $I \gamma(\theta) I \in \End^2(\E_0 \1_\uk \E_0)$. Since
$$\E_0 \1_\uk \E_0 \cong (\T'_0 \T'_1)^{-1} \E_1 \T'_0 \T'_1 \1_\uk (\T'_0 \T'_1)^{-1} \E_1 \T'_0 \T'_1$$
this map induces a zero map between the summands $\E_0^{(2)} \{1\}$ if and only if 
$$I [(T'_0 T'_1)(\gamma(\theta))] I \in \End^2(\E_1 \1_{s_0 s_1 \cdot \uk} \E_1)$$
induces a zero map between summands $\E_1^{(2)} \{1\}$. By Lemma \ref{lem:Taction1} this is the case if and only if $\la \theta, (T_1 T_0)(\alpha_1) \ra = 0$. But 
$$T_1 T_0 (\alpha_1) = T_1 \phi_1 \dots \phi_{n-1} (T_1 T_2 \dots T_{n-1} \dots T_2 T_1) (\alpha_1) = - \sum_{i \in I} \alpha_i.$$
The result follows. 
\end{proof}

\begin{Remark}
For $i \in I$ the actions of $\E_i \1_\uk$ and $\1_\uk \F_i $ are given by kernels
\begin{align*}
\O_{\bY^1_i(\uk)} \{k_i-1\} \in D(\bY(\uk) \times \bY(\uk+\alpha_i)) \\
\O_{\bY^1_i(\uk)} \{k_{i+1}\} \in D(\bY(\uk+\alpha_i) \times \bY(\uk))
\end{align*}
The kernels corresponding to $\E_0 \1_\uk$ and $\1_\uk \F_0$ are more difficult to describe explicitly. 
\end{Remark}

\begin{Lemma}\label{lem:Taction1}
Working with $\tK^{n,N}_{\Gr,m}$ we have the commutative diagrams
\begin{align*}
\xymatrix{
\spn\{\alpha_i: i \in \hI \} \ar[d]^{\gamma} \ar[r]^{T_i} & \spn\{\alpha_i: i \in \hI \} \ar[d]^{\gamma} & \spn\{\alpha_i: i \in \hI \} \ar[d]^\gamma \ar[r]^{id} & \spn\{\alpha_i: i \in \hI \} \ar[d]^\gamma \\
\End^2(\1_\uk) \ar[r]^{T'_i} & \End^2(\1_{s_i \cdot \uk}) & \End^2(\1_\uk) \ar[r]^{\phi'_i} & \End^2(\1_{\uk}) }
\end{align*}
where the map $T_i \in \End(\spn\{\alpha_i: i \in \hI \})$ is the standard action given by reflecting in $\alpha_i$ and $T'_i,\phi_i'$ are the isomorphisms induced by the equivalences $\T'_i, \phi_i'$. 
\end{Lemma}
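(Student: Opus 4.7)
The argument proceeds in three steps: first I would prove the first diagram for $i \in I$ (the finite case), then deduce the second diagram for all $i \in \hI$ from this, and finally deduce the first diagram for $i = 0$ from both of the previous steps together with the defining formula \eqref{eq:T'0def}.

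\emph{First diagram for $i \in I$.} This is the statement that conjugation by $\T'_i$ intertwines the map $\gamma$ with the reflection $s_i$ on the finite root lattice. For $j \in I$ with $\la \alpha_i, \alpha_j \ra = 0$ the generators $\E_i, \F_i$ and the 2-morphism $\gamma(\alpha_j)$ commute trivially. For $\la \alpha_i, \alpha_j \ra = -1$ (so $j = i \pm 1$) the required relation $\T_i \cdot \gamma(\alpha_j) \cdot \T_i^{-1} = \gamma(\alpha_j + \alpha_i)$ reduces to the $\sl_3$ case and is a standard intertwining property of the Rickard complex built into any $(\gl_n, \theta)$ action; it is worked out explicitly in \cite[Section~7]{C2} (cf.~Corollary~7.3). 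The case $j = i$ is similar. Since $\T'_i$ differs from $\T_i$ only by internal grading and cohomological shifts, which are invisible to conjugation on 2-morphisms, the same intertwining carries over to $\T'_i$.

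\emph{Second diagram.} Recall that $\phi'_i = \T'_i \T'_{i,1}$, where $\T'_{i,1}$ is built from the shifted generators $\E'_{i,1}, \F'_{i,-1}$ exactly as $\T'_i$ is built from $\E_i, \F_i$. The same argument as in the previous paragraph applies verbatim (with the $\sl_3$ computation replaced by its loop-shifted analogue) to show that conjugation by $\T'_{i,1}$ also realizes the reflection $s_i$ on the image of $\gamma$. Hence
\[ \phi'_i \cdot \gamma(\alpha) \cdot (\phi'_i)^{-1} = \T'_i \cdot \gamma(s_i(\alpha)) \cdot (\T'_i)^{-1} = \gamma(s_i^2(\alpha)) = \gamma(\alpha), \]
which is precisely the assertion that $\phi'_i$ acts as the identity through $\gamma$.

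\emph{First diagram for $i = 0$.} Using the definition $\T'_0 = \phi'_1 \cdots \phi'_{n-1} (\T'_1 \T'_2 \cdots \T'_{n-1} \cdots \T'_2 \T'_1)^{-1}$, the two previous steps imply that conjugation by $\T'_0$ acts on $\gamma$ as $(s_1 s_2 \cdots s_{n-1} \cdots s_2 s_1)^{-1}$ in the finite Weyl group. A short induction on $n$ (or direct verification on $e_k$ for each $k$) shows that the palindromic word $s_1 s_2 \cdots s_{n-1} s_{n-2} \cdots s_2 s_1$ is the transposition $(1,n)$, which is precisely $s_0$, the reflection in $\alpha_0 = e_1 - e_n$. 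Since $s_0$ is an involution, its inverse is itself and we conclude. The main obstacle is the $i \in I$ case of the first diagram: although standard, it requires a careful verification of the braid--root intertwining with the grading shifts appropriate to $\T'_i$ rather than $\T_i$; the remaining steps are then purely formal manipulations with the decomposition of $\T'_0$ and a short computation in $S_n$.
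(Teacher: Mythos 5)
Your proof takes a genuinely different route from the paper's, and unfortunately the key step is not actually established. The paper does not attempt an abstract categorical intertwining argument; instead it identifies $\End^2(\1_\uk)$ concretely as $\spn\{\tr_j : j=1,\dots,n\}$ via the characteristic polynomial map $\ch\colon \bY(\uk)\to\bA_\uk$, observes (using \cite[Proposition~7.1]{CK4}) that on the open locus $\bY(\uk)^o$ the functor $\T'_i$ is induced by the graph $\bZ_i^o$ of an honest isomorphism, and then computes directly that this isomorphism exchanges $\tr_i$ and $\tr_{i+1}$. Since $\gamma(\alpha_i) = -\tr_i + \tr_{i+1}$, the left square follows. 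For the right square the paper simply invokes \cite[Corollary~7.3]{CK4}, which identifies $\phi'_i$ with tensoring by a line bundle — a functor that is transparently the identity on $\End^2(\1_\uk)$.

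The gap in your argument is in the first step. You assert that $\T_i \cdot \gamma(\alpha_j) \cdot \T_i^{-1} = \gamma(s_i\alpha_j)$ is ``a standard intertwining property of the Rickard complex built into any $(\gl_n,\theta)$ action,'' citing \cite[Section~7, Corollary~7.3]{C2}. But that corollary (as recorded as equation~\eqref{eq:TiTjEi} in this paper) is an intertwining relation for the 1-morphisms $\E_i, \F_i$ with the braid elements — it says nothing about how conjugation by $\T_i$ acts on the 2-morphism $\gamma(\theta)$. There is no axiom in the definition of a $(\hgl_n,\theta)$ action in Section~\ref{sec:action} that directly produces this relation, and you do not supply an argument for it. The case $j=i$, which you dismiss as ``similar,'' is in fact of a different nature: it requires $\T_i\gamma(\alpha_i)\T_i^{-1} = -\gamma(\alpha_i)$, i.e.\ a sign, not a permutation, and certainly does not reduce to an $\sl_3$ computation. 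Your step two compounds the issue by asserting the same unproven intertwining for $\T'_{i,1}$ ``verbatim''; note that $\E'_{i,1},\F'_{i,-1}$ are not the generators to which the $\theta$-axiom~\eqref{co:theta} applies, so even the formal framework you are appealing to is not obviously in force there.

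Steps two and three are also heavier than necessary. Once the line-bundle description of $\phi'_i$ from \cite[Corollary~7.3]{CK4} is available, the right square is immediate and does not require analyzing $\T'_{i,1}$ at all. The reduction of $i=0$ to $i\in I$ via the decomposition~\eqref{eq:T'0def} and the identity of $s_1\dotsm s_{n-1}\dotsm s_1$ with the transposition $(1\,n)$ is sound and is implicitly the same reduction the paper relies on; it is the finite case $i\in I$ that you would still need to establish, and the cleanest way is the paper's concrete trace-function computation rather than the categorical abstraction you attempted.
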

\begin{proof}
Recall from \cite{CK4} that we have maps 
$$\ch\colon \bY(\uk) \rightarrow \bA_\uk$$
where $\bA_{\uk} = \bA_{k_1} \times \dots \times \bA_{k_n}$ and $\bA_{k_i} = \bA^{k_i}/S_{k_i}$. The map is given by keeping track of the characteristic polynomials of $z$ acting on $L_i/L_{i-1}$ for $i=1,\dots,n$. Since the morphism $\ch$ is proper we have 
$$\Hom(\O_{\bY(\uk)}, \O_{\bY(\uk)} \{2\}) \cong \Hom(\O_{\bA_\uk}, \O_{\bA_\uk} \{2\}) \cong \spn \{\tr_i: i=1,\dots,n \}$$
where $\tr_i(L_\bullet) \coloneqq \tr(z|_{L_i/L_{i-1}})$. 

Now denote by $\bA_\uk^o \subset \bA_\uk$ the open subset where the eigenvalues of $z|_{L_n/L_0}$ are all distinct. We denote $\bY(\uk)^o \coloneqq \ch^{-1}(\bA_\uk^o)$ and define $\bZ_i^o(\uk) \subset \bY(\uk)^o \times \bY(s_i \cdot \uk)^o$ as the locus 
\begin{align*}
\{ (L_\bullet, L'_\bullet): & L_j = L'_j \text{ if } j \ne i, z|_{L_{i+1}/L_{i-1}} \text{has distinct eigenvalues,} \\
& z|_{L_{i+1}/L_i} \text{ and } z|_{L'_i/L_{i-1}} \text{have the same (distinct) eigenvalues}\}.
\end{align*}
Notice this is the graph of an isomorphism. Following \cite[Proposition 7.1]{CK4}, it is not hard to see that $\T'_i$ restricted to $\bY(\uk)^o$ is induced by $\O_{\bZ_i^o}$ (in fact, the closure of this locus induces $\T'_i$ but this is harder to prove and not necessary for our purposes). 

Denote by $\pi_1$ and $\pi_2$ the natural projections from $\bZ_i(\uk)^o$ to $\bY(\uk)^o$ and $\bY(s_i \cdot \uk)^o$. Then on $\bZ_i(\uk)^o$ it is clear that $\pi_1^*(\tr_j) = \pi_2^*(\tr_j)$ if $j \ne i,i+1$ while $\pi_1^*(\tr_i) = \pi_2^*(\tr_{i+1})$ and $\pi_1^*(\tr_{i+1})=\pi_2^*(\tr_i)$. It follows that 
$$T'_i(\tr_j) = \begin{cases} 
\tr_j &\text{ if } j \ne i,i+1 \\
\tr_{i+1} &\text{ if } j=i \\
\tr_i &\text{ if } j=i+1. \end{cases}$$ 
The commutativity of the left square now follows since $\gamma: \spn\{\alpha_i: i \in \hI \} \rightarrow \End^2(\1_\uk)$ is defined by taking $\alpha_i \mapsto - \tr_i + \tr_{i+1}$. 

Finally, the fact that $\phi'_i$ acts trivially on $\End^2(\1_\uk)$ (commutativity of the right diagram) is a consequence of \cite[Corollary 7.3]{CK4} which shows that $\phi'_i$ is given by tensoring with a particular line bundle. 
\end{proof}

\subsection{The 2-category $\K^{n,N}_{\Gr,m}$}\label{sec:catk}

The 2-category $\K^{n,N}_{\Gr,m}$ is similar to $\tK^{n,N}_{\Gr,m}$ but instead of $\bY(\uk)$ we use
$$Y(\uk) \coloneqq \{ \C[z]^m = L_0 \subseteq L_1 \subseteq \dots \subseteq L_n \subseteq \C(z)^m : z L_i \subset L_{i-1}, \dim(L_i/L_{i-1}) = k_i \}.$$
These varieties are smooth with $\dim Y(\uk) = \sum_i k_i(m-k_i)$. We still have a $GL_m[[z]] \rtimes \C^\times$ action on all these categories and, as before, we work $H$-equivariantly with respect to some fixed $H \subseteq GL_m[[z]] \rtimes \C^\times$. The constructions in \cite{CKL1,C2} give rise to an $(\Lsl_n, \t)$ action on $\K^{n,N}_{\Gr,m}$ which we now describe in detail. 

Consider the correspondences 
$$W_i^r(\uk) = \{(L_\bullet,L'_\bullet): L_j=L_j' \text{ for } j \ne i, L'_i \subset L_i\} \subset Y(\uk) \times Y(\uk+r\alpha_i).$$
Note that in this definition we require that $W_i^r(\uk)$ is a subvariety which, in particular, implies that $zL_{i+1} = zL'_{i+1} \subset L'_i$ and $zL_i \subset L_{i-1}=L'_{i-1}$. For $i \in I$ we define $\E_i \1_\uk$ and $\1_\uk \F_i$ using kernels 
\begin{align*}
\O_{W_i^1(\uk)} \otimes \det(\sV'_i/\sV_{i-1}) \otimes \det(\sV_{i+1}/\sV_i)^{-1} \{k_i-1\} \in D(Y(\uk) \times Y(\uk+\alpha_i)) & \ \text{ and } \\
\O_{W_i^1(\uk)} \otimes \det(\sV'_i/\sV_i)^{k_{i+1}-k_i+1} \{k_{i+1}\} \in D(Y(\uk+\alpha_i) \times Y(\uk)) &
\end{align*}
respectively, while $\E_{i,1} \1_\uk$ and $\1_\uk \F_{i,-1}$ are defined by kernels
\begin{align*}
\O_{W_i^1(\uk)} \otimes \det(\sV'_i/\sV_{i-1}) \otimes \det(\sV_{i+1}/\sV'_i)^{-1} \{k_i-1+i\} \in D(Y(\uk) \times Y(\uk+\alpha_i)) & \ \text{ and } \\
\O_{W_i^1(\uk)} \otimes \det(\sV'_i/\sV_i)^{k_{i+1}-k_i+2} \{k_{i+1}-i\} \in D(Y(\uk+\alpha_i) \times Y(\uk)) &
\end{align*}
In this setup, since we are on the skew side, we have $\la 1 \ra = [1]\{-1\}$.  

The varieties $Y(\uk)$ have natural deformations $\tY(\uk) \rightarrow \bA^n$ defined by
\begin{align*}
\tY(\uk) \coloneqq \{(L_\bullet, \ux \in \C^n): & \C[z]^m = L_0 \subseteq L_1 \subseteq \dots \subseteq L_n \subseteq \C(z)^m \\ 
& (z-x_i) L_i \subset L_{i-1}, \dim(L_i/L_{i-1}) = k_i \}.
\end{align*}
\begin{Remark}
Note that, although $\tY(1^n) \cong \bY(1^n)$, in general $\tY(\uk)$ is just a subvariety of $\bY(\uk)$. 
\end{Remark}
Identifying the tangent bundle fiber $T_0(\bA^n)$ with the weight lattice of $GL_n$ we have $\alpha_i = (\dots,-1,1,\dots)$ where the $-1$ is in position $i$ and the rest of the entries are zero. Following the discussion in Appendix~\ref{sec:appdeform} this gives us a linear map 
\begin{align}\label{eq:gamma}
\varpi: \spn\{\alpha_i: i \in \hI \} \rightarrow \Hom(\O_{\Delta_{Y(\uk)}}, \O_{\Delta_{Y(\uk)}} [2]\{-2\}) = \End^2(\1_\uk).
\end{align}
The equality above is just by definition since $\la 2 \ra = [2]\{-2\}$.

\begin{Proposition}\label{prop:action}
The functors $\E_{i,\ell},\F_{i,-\ell}$ for $i \in I$ and $\ell \in \{0,1\}$ together with the map from \eqref{eq:gamma} define an $(L\gl_n,\t)$ action on $\K^{n,N}_{\Gr,m}$. 
\end{Proposition}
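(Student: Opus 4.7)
The plan is to verify the conditions of an $(L\gl_n, \t)$ action on $\K^{n,N}_{\Gr,m}$, following the template of Theorem \ref{thm:action1} for the symmetric case. The underlying $(L\sl_n, \t)$-type relations --- adjunctions, the $\sl_2$ relation, the commutation for $i \ne j$ in $I$, and the decomposition $\E_i \E_i \cong \E_i^{(2)} \la -1 \ra \oplus \E_i^{(2)} \la 1 \ra$ --- are already established in \cite{CKL1, CKL2, C2}. The remaining tasks are: (i) extending these to incorporate the loop generators $\E_{i,1}, \F_{i,-1}$, and (ii) constructing the theta map $\varpi$ and verifying the analogue of condition \ref{co:theta}.

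For (i), the loop kernels differ from those for $\E_i, \F_i$ only by an extra determinant twist involving $\det(\sV_{i+1}/\sV'_i)^{-1}$ and an additional $\{i\}$ shift. Using that on the skew side $\la 1 \ra = [1]\{-1\}$, a direct kernel computation on the correspondences $W_i^r(\uk)$ verifies the loop commutation relations (the skew-side analogues of the Drinfeld relations of Corollary \ref{cor:app1}), mirroring the argument from \cite[Section 5]{CK4} on the symmetric side. The extra determinant factors precisely account for the discrepancy between the symmetric incidence $zL_i \subseteq L_i$ and the skew incidence $zL_i \subseteq L_{i-1}$.

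For (ii), following Appendix \ref{sec:appdeform}, the family $\tY(\uk) \to \bA^n$ yields, via the Kodaira--Spencer/Atiyah class construction, a linear map $\varpi \colon T_0 \bA^n \to \End^2(\O_{\Delta_{Y(\uk)}}) = \End^2(\1_\uk)$ which, under the identification of $\alpha_i$ with the appropriate tangent vector, is precisely the map \eqref{eq:gamma}. Its compatibility with the generators reduces to the observation that the correspondences $W_i^r(\uk)$ deform flatly over $\bA^n$ by imposing the same incidence condition on the deformed spaces $\tY(\uk)$.

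The nonvanishing in condition \ref{co:theta} --- that $I \varpi(\alpha_i) I \in \End^2(\E_i \1_\uk \E_i)$ induces a nonzero map between the $\E_i^{(2)} \la 1 \ra$ summands (and zero when paired against $\alpha_j$ with $\la \alpha_i, \alpha_j \ra = 0$) --- is the most technical step. I would localize to the open subvariety of $Y(\uk)$ where $z$ acts with distinct eigenvalues on each $L_{i+1}/L_{i-1}$. On this locus, an argument parallel to Lemma \ref{lem:Taction1} identifies $\varpi(\alpha_i)$ with $-\tr_i + \tr_{i+1}$, and the pairing reduces to an elementary trace computation on a $\bP^1$-fiber. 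The main obstacle is managing the numerous determinant twists consistently through the loop relations; once this bookkeeping matches, the proof follows the template of \cite{CK4} verbatim.
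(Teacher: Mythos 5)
Your proposal misidentifies what actually needs to be proved. The paper explicitly states that the conditions (i)--(v) of an $(L\gl_n,\t)$ action (adjunctions, the $\sl_2$ relation, commutation for $i\neq j$, etc.) all follow from the prior work \cite{CKL1,C2}, so the only new content is condition (vi): that for $\la i,j\ra=-1$ there is an isomorphism $\Cone(\alpha)\cong\Cone(\beta)$ between the cones of the canonical maps $\E_i\E_{j,1}[-1]\{1\}\to\E_{j,1}\E_i$ and $\E_j\E_{i,1}[-1]\{1\}\to\E_{i,1}\E_j$. This is a genuinely \emph{triangulated} condition with no analogue in the additive $(\hgl_n,\t)$ axioms, and it is the crux of the Drinfeld-side definition. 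Your proposal never mentions it.

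Instead you divide the work into (i) verifying ``loop commutation relations'' via kernel computations --- but those follow from cited work and are not the issue --- and (ii) constructing $\varpi$ and checking the $\theta$-nonvanishing condition \ref{co:theta}. Condition \ref{co:theta} belongs to the \emph{Kac--Moody} $(\hgl_n,\t)$ axiomatics, not the $(L\gl_n,\t)$ definition from \cite[Section~4]{CK4}, and the paper checks the relevant $\theta$-compatibility later when passing from $(L\gl_n,\t)$ to $(\hgl_n,\t)$ in Theorem \ref{thm:action2} via Lemma \ref{lem:Taction2}, not here. Your localization-to-distinct-eigenvalues argument is indeed the right shape for that later lemma (and for Lemma \ref{lem:Taction1} on the symmetric side), but it does nothing to establish the cone isomorphism. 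The actual proof of condition (vi) requires computing the convolutions $\sE_i*\sE_{j,1}$ and $\sE_{j,1}*\sE_i$ as explicit line bundles on the two correspondences $W_{ij}$ and $W_{ji}$, then using the short exact sequence $\O_{W_{ji}}(-W_{ij}\cap W_{ji})\to\O_{W_{ji}\cup W_{ij}}\to\O_{W_{ij}}$ together with the identification of the ideal sheaf of the divisor $W_{ij}\cap W_{ji}\subset W_{ji}$ as a determinant twist, to show that both $\Cone(\alpha)$ and $\Cone(\beta)$ are isomorphic to $\O_{W_{ij}\cup W_{ji}}\otimes\sL$ for the same line bundle $\sL$. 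This geometric argument, which is the entire substance of the paper's proof, is absent from your proposal.
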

\begin{proof}
We refer to \cite[Section 4]{CK4} for a precise definition of an $(L\gl_n,\t)$ action. It is easy to see that the earlier work in \cite{CKL1,C2} proves all the conditions with the exception of (vi). This condition states that, for every $i,j \in I$ with $\la i,j \ra = -1$, one has $\Cone(\alpha) \cong \Cone(\beta)$ where $\alpha$ and $\beta$ are the unique (up to scaling) maps 
$$\E_i \E_{j,1} [-1]\{1\} \xrightarrow{\alpha} \E_{j,1} \E_i \ \ \text{ and } \ \ \E_j \E_{i,1} [-1]\{1\} \xrightarrow{\beta} \E_{i,1} \E_j.$$

We assume from now on that $j=i+1$ (there is no loss of generality in doing this). Let us denote by $\sE_i$ and $\sE_{j,1}$ the kernels of $\E_i$ and $\E_{j,1}$. To prove $\Cone(\alpha) \cong \Cone(\beta)$ we need to first understand the convolutions $\sE_i * \sE_{j,1}$ and $\sE_{j,1} * \sE_i$ more explicitly. Denote \begin{align*}
W_{ij}(\uk) &= \{(L_\bullet,L'_\bullet): L_k = L_k' \text{ for } k \ne i,j, L_i \subset L'_{i+1} \} \subset Y(\uk) \times Y(\uk+\alpha_i+\alpha_j) \\
W_{ji}(\uk) &= \{(L_\bullet,L'_\bullet): L_k = L_k' \text{ for } k \ne i,j, zL_{i+1} \subset L'_i \} \subset Y(\uk) \times Y(\uk+\alpha_i+\alpha_j). 
\end{align*}
To simplify notation we just write $W_{ij}$ and $W_{ji}$ for $W_{ij}(\uk)$ and $W_{ji}(\uk)$. It is not difficult to show (cf. \cite[Lemma 3.8]{CK2}) that 
\begin{align*}
\sE_i * \sE_{j,1} & \cong \O_{W_{ij}} \otimes \det(\sV_i'/\sV_{i-1}) \otimes \det(\sV_{i+2}/\sV'_{i+1})^{-1} \{s-1\} \\
\sE_{j,1} * \sE_i & \cong \O_{W_{ji}} \otimes \det(\sV_{i+1}/\sV'_{i+1})^{-1} \otimes \det(\sV'_{i+1}/\sV_{i-1}) \otimes \det(\sV_{i+2}/\sV_i)^{-1} \{s\}
\end{align*}
where $s := k_i + k_{i+1} - 1 + j$. This is because in the convolution diagram everything intersects in the expected dimension and so it is just a question of carefully keeping track of the line bundles (and $\{\cdot\}$ grading).

The next step is to understand the morphism $\alpha\colon \sE_i * \sE_{j,1} [-1] \{1\} \rightarrow \sE_{j,1} * \sE_i$. The geometric explanation of this morphism is the standard exact triangle
\begin{equation}\label{eq:3}
\O_{W_{ji}} (-W_{ij} \cap W_{ji}) \rightarrow \O_{W_{ji} \cup W_{ij}} \rightarrow \O_{W_{ij}}.
\end{equation}
Here we write $\O_{W_{ji}} (- W_{ij} \cap W_{ji})$ for the ideal sheaf of $W_{ij} \cap W_{ji} \subset W_{ji}$, which happens to be a divisor. This divisor, which is the locus inside $W_{ji}$ where $L_i \subset L_{i+1}$, is carved out as the vanishing locus of $\sV_i/\sV'_i \rightarrow \sV_{i+1}/\sV'_{i+1}$. Hence 
\begin{equation}\label{eq:6}
\O_{W_{ji}} (-W_{ij} \cap W_{ji}) \cong \O_{W_{ji}} \otimes \det(\sV_i/\sV'_i) \otimes \det(\sV_{i+1}/\sV'_{i+1})^{-1}. 
\end{equation}
Rewriting \eqref{eq:3} we get the following exact triangle
\begin{equation}\label{eq:4}
\O_{W_{ij}} [-1] \rightarrow \O_{W_{ji}} \otimes \det(\sV_i/\sV'_i) \otimes \det(\sV_{i+1}/\sV'_{i+1})^{-1} \rightarrow  \O_{W_{ji} \cup W_{ij}}. 
\end{equation}
Tensoring \eqref{eq:4} with the line bundle 
$$\sL \coloneqq \det(\sV_i'/\sV_{i-1}) \otimes \det(\sV_{i+2}/\sV'_{i+1})^{-1} \{s\}$$ 
and then simplifying and comparing with the expressions for $\sE_i * \sE_{j,1}$ and $\sE_{j,1} * \sE_i$ computed above, we arrive at the exact triangle
\begin{equation}\label{eq:5}
\sE_i * \sE_{j,1} [-1]\{1\} \rightarrow \sE_{j,1} * \sE_i \rightarrow \O_{W_{ji} \cup W_{ij}} \otimes \sL.
\end{equation}
On the other hand, we know that $\Hom(\sE_i * \sE_{j,1}[-1]\{1\}, \sE_{j,1} * \sE_i)$ is spanned by $\alpha$ (this is a formal calculation involving adjunctions, see for instance Lemma A.4 in the appendix of \cite{C3}). It follows that $\Cone(\alpha) \cong \O_{W_{ji} \cup W_{ij}} \otimes \sL$. 

One can similarly show that $\Cone(\beta) \cong \O_{W_{ji} \cup W_{ij}} \otimes \sL$. The main difference is that, instead of \eqref{eq:3}, one uses the standard exact triangle
$$\O_{W_{ij}}(-W_{ji} \cap W_{ij}) \rightarrow \O_{W_{ij} \cup W_{ji}} \rightarrow \O_{W_{ji}}$$
and instead of the isomorphism in \eqref{eq:6} one uses 
$$\O_{W_{ji}} (-W_{ij} \cap W_{ji}) \cong \O_{W_{ji}} \otimes \det(\sV_{i+1}/\sV'_{i+1}) \otimes \det(\sV_i/\sV'_i)^{-1} \{-2\}.$$
This is because $W_{ij} \cap W_{ji}$ inside $W_{ji}$ is the locus where $zL_{i+1} \subset L_i'$ and hence is carved as the vanishing of the morphism $z\colon \sV_{i+1}/\sV'_{i+1} \rightarrow \sV_i/\sV'_i \{2\}$. 
\end{proof}

As with the action on $\tK^{n,N}_{\Gr,m}$ we will define shifted generators $\E'_{i,1} \coloneqq \E_{i,1} \{-i\}$ and $\F'_{i,-1} \coloneqq \F_{i,-1} \{i\}$. The advantage is that it gets rid of the weird shift by $i$ in our original definition and simplifies subsequent shifts. The disadvantage is that $\Cone(\alpha)$ and $\Cone(\beta)$ are now only isomorphic up to a shift by $\{1\}$. Nevertheless, we can define $\T_0'$ and $\E_0,\F_0$ exactly as in \eqref{eq:T'0def} and \eqref{eq:E'0def} where again $\phi_i' = \T'_i \T_{i,1}'$.

Before we can prove the main result (Theorem \ref{thm:action2}) will need to identify $\phi'_i$ more explicitly (Corollary \ref{cor:phi}). 

\begin{Lemma}\label{lem:Ts}
Denote 
$$Z_i(1^N,1^N) \coloneqq \{(L_\bullet,L'_\bullet): L_j = L_j' \text{ if } j \ne i \} \subset Y(1^N) \times Y(1^N).$$
Then the kernels 
$$\O_{Z_i(1^N,1^N)} \{1\} \ \ \text{ and } \ \ \O_{Z_i(1^N,1^N)} \otimes \det(\sV_i/\sV_{i-1}) \otimes \det(\sV_{i+1}/\sV'_i)^{-1} \{1\}$$
induce the braid element $\T_i'$ and its inverse acting on $D(Y(1^N))$. Similarly, 
\begin{align*}
& \O_{Z_i(1^N,1^N)} \otimes \det(\sV_i/\sV_{i-1})^{-1} \otimes \det(\sV'_i/\sV_{i-1}) \{1\} \ \ \text{ and } \\ 
& \O_{Z_i(1^N,1^N)} \otimes \det(\sV'_i/\sV_{i-1}) \otimes \det(\sV_{i+1}/\sV'_i)^{-1} \{1\}
\end{align*}
induce the braid element $\T'_{i,1}$ and its inverse acting on $D(Y(1^N))$. 
\end{Lemma}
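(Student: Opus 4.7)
The plan is to argue along the same lines as the convolution computation in Proposition~\ref{prop:action}, combined with a standard truncation argument for Rickard complexes at vanishing weight. First I would observe that at $\uk = (1^N)$ we have $\la \uk, \alpha_i\ra = k_{i+1} - k_i = 0$, so in the Rickard complex for $[\T_i']\1_\uk$ the divided powers $\E_i^{(j)}\F_i^{(j)}\1_\uk$ vanish for $j\ge 2$ because $\F_i^{(2)}\1_\uk$ lands in a weight with $k_{i+1} = -1$. Thus the complex truncates to a two-term complex of the form $[\1_\uk \xrightarrow{d} \E_i \F_i \1_\uk \la 1 \ra]\{1\}$, where the differential $d$ is (up to a nonzero scalar) the unit of the $(\F_i, \E_i)$ adjunction, the outer $\{1\} = \{k_i\}$ shift being the distinguishing feature of $\T_i'$ versus $\T_i$.

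Next I would compute $\E_i\F_i\1_\uk$ explicitly as a kernel on $Y(1^N)\times Y(1^N)$ by convolving the kernels given at the beginning of Section~\ref{sec:catk}. The convolution is taken over $Y(\uk-\alpha_i)$ (where $k_i=2$, $k_{i+1}=0$); as in the proof of Proposition~\ref{prop:action}, the relevant fiber product has the expected dimension, so it is a matter of bookkeeping line bundles. The support of the convolution is precisely $Z_i(1^N,1^N)$, and one obtains an isomorphism of the form $\E_i \F_i \1_\uk \cong \O_{Z_i(1^N,1^N)}\otimes\sM\,[1]\{\text{shift}\}$ for an explicit line bundle $\sM$ built out of the determinants of $\sV_i/\sV_{i-1}$, $\sV'_i/\sV_{i-1}$, $\sV_{i+1}/\sV_i$ and $\sV_{i+1}/\sV'_i$.

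The key geometric step is then to interpret the map $d$ via the inclusion $\Delta \hookrightarrow Z_i(1^N, 1^N)$ of the diagonal. One checks that $\Delta$ is a Cartier divisor in $Z_i$ by exhibiting a local equation (the natural composition $\sV_i/\sV_{i-1} \hookrightarrow \sV_{i+1}/\sV_{i-1} \twoheadrightarrow \sV_{i+1}/\sV'_i$ cuts out $\Delta$ scheme-theoretically), and this identifies $\sO_{Z_i}(-\Delta)$ with the line bundle $\det(\sV_i/\sV_{i-1})^{-1}\otimes\det(\sV_{i+1}/\sV'_i)\{-2\}$ on $Z_i$ (the $\{-2\}$ coming from the $\C^\times$-weight of the $z$-action). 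Using the standard exact sequence
\[
0 \to \sO_{Z_i}(-\Delta) \to \sO_{Z_i} \to \sO_\Delta \to 0
\]
and the fact that $d$ corresponds under our identification of $\E_i\F_i\1_\uk$ to the map $\sO_\Delta \to \sO_{Z_i}\otimes\sM\,[1]$ in the obvious way, the cone of $d$ simplifies (after the outer $\{k_i\} = \{1\}$ twist) to $\sO_{Z_i}\{1\}$.

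For $(\T_i')^{-1}$ I would run the dual argument using the Rickard complex for the inverse braid. This trades the ideal sheaf $\sO_{Z_i}(-\Delta)$ for $\sO_{Z_i}(\Delta)$, which accounts for the extra twist by $\det(\sV_i/\sV_{i-1})\otimes\det(\sV_{i+1}/\sV'_i)^{-1}$ in the statement. Finally, for $\T_{i,1}'$ and $(\T_{i,1}')^{-1}$ I would repeat the argument verbatim, noting only that the kernels for $\E_{i,1}'$ and $\F_{i,-1}'$ differ from those of $\E_i, \F_i$ precisely by the line bundle $\det(\sV'_i/\sV_i)$ (and a compensating $\{-i\}/\{i\}$ shift that cancels in the braid complex); these twists propagate through the convolution to produce the stated formulas. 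The main obstacle in this outline is the explicit identification of $\sO_{Z_i}(-\Delta)$ as a line bundle in terms of the tautological bundles, since $Z_i(1^N,1^N)$ is not smooth in general and the relevant vanishing locus has to be analyzed scheme-theoretically.
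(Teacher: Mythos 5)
There is a genuine geometric gap in your outline that the paper's proof resolves by a slightly different decomposition.

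Your overall strategy is the right one: at weight $(1^N)$ the Rickard complex truncates to a two-term complex $[\1_\uk \to \E_i\F_i\1_\uk\la 1\ra]$, one computes $\E_i\F_i\1_\uk$ explicitly as a kernel, and one identifies the cone of the differential using a short exact sequence of sheaves. However, two linked claims in the middle of your argument are wrong. First, the support of the convolution $\sE_i * \sF_i$ is \emph{not} $Z_i(1^N,1^N)$. Because $\F_i\1_\uk$ lands in the weight with $k_i=2$, $k_{i+1}=0$ (so $L_i = L_{i+1}$ there), the composition factors through the condition $zL_{i+1}\subset L_{i-1}$. The support is therefore the proper closed subvariety $W_i = \{(L_\bullet,L'_\bullet): L_j=L'_j \text{ for } j\ne i,\ zL_{i+1}\subset L_{i-1}\}$, which is one of the two irreducible components of $Z_i = \Delta \cup W_i$. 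Second, precisely because $\Delta$ is a whole irreducible component of the reducible scheme $Z_i$ (of the same dimension as $Z_i$), it is \emph{not} a Cartier divisor in $Z_i$, and the ideal sheaf $I_{\Delta/Z_i}$ is not a line bundle on $Z_i$. The section of $(\sV_i/\sV_{i-1})^{-1}\otimes(\sV_{i+1}/\sV'_i)$ you propose vanishes identically on the component $\Delta$, so its zero locus on $Z_i$ has codimension zero, not one. Also note: you wrote the twist with the wrong sign (the ideal of the vanishing locus of a section of $L$ is $L^{-1}$, not $L$), added an unwarranted $\{-2\}$ (the map is the natural quotient, not multiplication by $z$), and your $[1]$ shift on $\E_i\F_i$ should not be there.

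The fix, which is what the paper does, is to use the decomposition $Z_i = \Delta \cup W_i$ into its two components. The ideal sheaf $I_{\Delta/Z_i}$ is then the pushforward of a line bundle on $W_i$, namely $\O_{W_i}(-\Delta\cap W_i)$, where $\Delta\cap W_i$ \emph{is} a genuine Cartier divisor in the (smooth) variety $W_i$, cut out by exactly the section $\sV_i/\sV_{i-1}\to\sV_{i+1}/\sV'_i$ restricted to $W_i$ (where it is not identically zero). One then has two short exact sequences of the form $\O_{W_i}\otimes\det(\sV_i/\sV_{i-1})\otimes\det(\sV_{i+1}/\sV'_i)^{-1}\to ?\to\O_\Delta$, one coming from the component decomposition of $Z_i$ and one from the (rotated) defining triangle for $\sT_i$ combined with the computation $\sE_i*\sF_i\cong\O_{W_i}\otimes\det(\sV_i/\sV_{i-1})\otimes\det(\sV_{i+1}/\sV'_i)^{-1}\{1\}$; comparing them gives $\sT_i\cong\O_{Z_i}$. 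With this corrected geometry, the rest of your outline (the dual argument for the inverse, and propagating the extra $\det(\sV'_i/\sV_i)$ twist for the $\E'_{i,1},\F'_{i,-1}$ variants) is the right way to proceed.
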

\begin{proof}
We prove the first isomorphism (cf. Proposition 7.1 of \cite{CK4}). The space $Z_i(1^N,1^N$) has two components: the diagonal $\Delta$ (where $L_i=L_i'$) and the locus $W_i$ where $zL_{i+1} \subset L_{i-1}$. This gives us the exact triangle
$$\O_{W_i}(-\Delta \cap W_i) \rightarrow \O_{Z_i(1^N,1^N)} \rightarrow \O_\Delta$$
of kernels in $Y(1^N) \times Y(1^N)$. On the other hand, $\Delta \cap W_i$ inside $W_i$ is the divisor consisting of points where $L_i=L_i'$ and hence is carved out by the vanishing locus of $\sV_i/\sV_{i-1} \rightarrow \sV_{i+1}/\sV'_i$. Hence we have the exact triangle
\begin{equation}\label{eq:7}
\O_{W_i} \otimes \det(\sV_i/\sV_{i-1}) \otimes \det(\sV_{i+1}/\sV'_i)^{-1} \rightarrow \O_{Z_i(1^N,1^N)} \rightarrow \O_\Delta.
\end{equation}
On the other hand, if we denote by $\sT_i, \sE_i, \sF_i$ the kernel inducing $\T_i,\E_i,\F_i$ then we have the exact triangle
$$\sT_i \rightarrow \O_\Delta \rightarrow \sF_i * \sE_i [1]\{-1\}.$$
It is straight forward to compute that 
$$\sE_i * \sF_i \cong \O_{W_i} \otimes \det(\sV_i/\sV_{i-1}) \otimes \det(\sV_{i+1}/\sV'_i)^{-1} \{1\}.$$
Thus we can rewrite the exact triangle above as 
$$\O_{W_i} \otimes \det(\sV_i/\sV_{i-1}) \otimes \det(\sV_{i+1}/\sV'_i)^{-1} \rightarrow \sT_i \rightarrow \O_\Delta.$$
Comparing it with \eqref{eq:7} we get that $\sT_i \cong \O_{Z_i(1^N,1^N)}$. Since $\T'_i = \T_i \{1\}$ this proves the first isomorphism. The other isomorphisms are proved similarly. 
\end{proof}

\begin{Corollary}\label{cor:phi}
The functor $\phi'_i \1_\uk$ is given by tensoring with the line bundle 
$$\det(\sV_i/\sV_{i-1})^{-1} \otimes \det(\sV_{i+1}/\sV_i).$$
\end{Corollary}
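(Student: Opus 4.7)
The strategy is to compute the kernel of $\phi'_i = \T'_i \circ \T'_{i,1}$ by convolving the kernels of the two braid functors, and to identify the result with $\O_\Delta \otimes \sC$, where $\sC \coloneqq \det(\sV_i/\sV_{i-1})^{-1} \otimes \det(\sV_{i+1}/\sV_i)$.

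I would first handle $\uk = (1^N)$, where Lemma~\ref{lem:Ts} supplies explicit kernels. Comparing formulas, the kernel of $\T'_{i,1}$ equals the kernel of $(\T'_i)^{-1}$ tensored with the line bundle $\det(\sV_i/\sV_{i-1})^{-2} \otimes \det(\sV'_i/\sV_{i-1}) \otimes \det(\sV_{i+1}/\sV'_i)$ on $Z_i(1^N,1^N)$. Using $\sV_{i-1} = \sV'_{i-1}$ and $\sV_{i+1} = \sV'_{i+1}$ on $Z_i$, this line bundle splits as an external product $\pi_1^*\sA \otimes \pi_2^*\sB$ with $\sA = \det(\sV_i/\sV_{i-1})^{-2}$ and $\sB = \det(\sV_i/\sV_{i-1}) \otimes \det(\sV_{i+1}/\sV_i)$; note that $\sA \otimes \sB = \sC$. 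By the projection formula, the $\pi_1^*\sA$ factor pulls outside the convolution over $Y(1^N)^3$. The crucial geometric observation is that $\pi_1^*\sB|_{Z_i} = \pi_2^*\sB|_{Z_i}$, since both equal $\det(\sV_{i+1}/\sV_{i-1})$ via the exact sequences
\[ 0 \to \sV_i/\sV_{i-1} \to \sV_{i+1}/\sV_{i-1} \to \sV_{i+1}/\sV_i \to 0 \]
(and the analogous one with $\sV'$). This identification forces $(\T'_i)^{-1}$ to commute with tensoring by $\sB$, so the inner convolution $\sT'_i * \bigl((\sT'_i)^{-1} \otimes \pi_2^*\sB\bigr)$ collapses to the identity twisted by $\sB$, yielding a diagonally-supported kernel $\O_\Delta \otimes \sB$. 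Combined with the $\sA$ factor, this gives $\O_\Delta \otimes \sA \otimes \sB = \O_\Delta \otimes \sC$ as claimed; the $\{\cdot\}$-shifts from Lemma~\ref{lem:Ts} work out by compatibility with $\T'_i \circ (\T'_i)^{-1} = \id$.

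For general $\uk$, the same strategy applies, but the kernels of $\T'_i$ and $\T'_{i,1}$ are Rickard convolutions of the terms $\E_i^{(a)}\F_i^{(b)}$ and their primed analogues. The key input — that $\pi_1^*\sB$ and $\pi_2^*\sB$ agree on the relevant correspondences $W_i^r(\uk)$ — continues to hold via the same exact sequences for $\sV_{i+1}/\sV_{i-1}$, and the termwise comparison of the primed and unprimed kernels from Section~\ref{sec:catk} reduces the whole composition to the same line-bundle commutation.

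\textbf{Main obstacle.} The harder step is passing from $Y(1^N)$ to arbitrary $Y(\uk)$: the Rickard complex for $\T'_i$ is built from several terms supported on distinct loci $W_i^r(\uk)$, and one must verify that the line-bundle identities comparing $\T'_i$ and $\T'_{i,1}$ (together with the $\{\cdot\}$-shifts between the primed and unprimed generators) are simultaneously compatible with the commutation $\pi_1^*\sB \cong \pi_2^*\sB$ on every correspondence appearing. The bookkeeping of these determinant twists is the technical heart; once done, the convolution collapses in exactly the same way as in the $(1^N)$ case.
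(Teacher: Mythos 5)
Your $(1^N)$ computation is correct and fleshes out exactly what the paper means by "follows from Lemma~\ref{lem:Ts}": you read off the kernels of $\T'_{i,1}$ and $(\T'_i)^{-1}$ from that lemma, their ratio is the external product $\pi_1^*\sA\otimes\pi_2^*\sB$ with $\sA=\det(\sV_i/\sV_{i-1})^{-2}$ and $\sB=\det(\sV_{i+1}/\sV_{i-1})$ (which satisfies $\pi_1^*\sB\cong\pi_2^*\sB$ on $Z_i$ because $\sV_{i\pm1}=\sV'_{i\pm1}$ there), and the convolution $\sT'_i*(\sK_{(\T'_i)^{-1}}\otimes\sL)$ collapses to $\O_\Delta\otimes\sC$ with $\sC=\sA\otimes\sB$. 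This matches the paper's strategy at $(1^N)$.

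For general $\uk$ the paper simply cites the analogous argument in \cite[Corollary~7.3]{CK4} (proved there on the symmetric side); you instead sketch a termwise Rickard-complex argument. The one point worth flagging is that your termwise comparison is phrased as matching $\T'_{i,1}$ against $(\T'_i)^{-1}$, but those two functors have structurally different Rickard complexes for general $\uk$ (the complex for $\T'_{i,1}$ is parallel to that of $\T'_i$, not to its inverse; the coincidence of support and length in the $(1^N)$ case is special because $\la\uk,\alpha_i\ra=0$ and the divided powers truncate). A cleaner route in the general case is to observe from the explicit kernels in Section~\ref{sec:catk} that $\E'_{i,1}$ and $\F'_{i,-1}$ are the conjugates of $\E_i,\F_i$ by tensoring with $\det(\sV_i/\sV_{i-1})$, hence $\T'_{i,1}$ is the conjugate of $\T'_i$ by the same line bundle; the remaining work is then to identify $\T'_i\circ(\otimes\det(\sV_i/\sV_{i-1}))\circ\T'_i$, which is where the $\pi_1^*\sB\cong\pi_2^*\sB$ observation on the various $W_i^r$ enters. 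You correctly identify this bookkeeping as the technical heart; neither you nor the paper carries it out here, and the paper defers to \cite{CK4} for exactly this step.
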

\begin{proof}
The claim when $\uk = (1^N)$ follows from Lemma \ref{lem:Ts}. The claim in general then follows as in the proof of \cite[Corollary 7.3]{CK4} (where one proves the analogous result on the symmetric side). 
\end{proof}

We now have the following result parallel to Theorem \ref{thm:action1}. 

\begin{Theorem}\label{thm:action2}
For $i \in \hI$ the functors $\E_i,\F_i$ defined above together with the composition $\spn\{\alpha_i: i \in \hI \} \xrightarrow{p} \spn\{\alpha_i: i \in I \} \rightarrow \End^2(\1_\uk)$ induce an $(L\gl_n,\t)_{KM}$ action on $\tK_{\Gr,m}^{n,N}$. Here $p$ is the map from (\ref{p:map}) while the second map is the one from \eqref{eq:gamma}.
\end{Theorem}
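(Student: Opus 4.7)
The proof will closely parallel that of Theorem \ref{thm:action1}, so the plan is to identify which parts transfer formally and isolate the one genuinely new ingredient. First, I would note that Proposition \ref{prop:action} already gives us an $(L\gl_n,\t)$ action on $\K^{n,N}_{\Gr,m}$, and hence by Corollary \ref{cor:app1} (together with the braid group relations among the $\T'_i$ and $\T'_{i,1}$) all relations among the $\E_i, \F_i$ for $i \in I$ and among the compositions defining $\E_0, \F_0$ follow. The switch from unprimed to primed generators amounts only to the grading/cohomological shifts $\E'_{i,1} = \E_{i,1}\{-i\}$, $\F'_{i,-1} = \F_{i,-1}\{i\}$, $\T'_i = \T_i[k_i]\{-k_i\}$, and it is mechanical to verify that the relations in the primed form still match the required ones. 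This handles all axioms except condition (\ref{co:theta}) in the definition of a $(\hgl_n,\t)$ action.

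The key nontrivial step is therefore condition (\ref{co:theta}) for $i = 0$: one must show $\E_0 \E_0 \cong \E_0^{(2)}\la -1\ra \oplus \E_0^{(2)}\la 1\ra$, and that for $\theta \in \spn\{\alpha_i : i \in \hI\}$ the endomorphism $I\gamma(\theta)I \in \End^2(\E_0\1_\uk\E_0)$ induces a (non)zero map between the summands $\E_0^{(2)}\la 1\ra$ exactly when $\la\theta,\alpha_0\ra = 0$ (respectively $\neq 0$), where $\gamma \coloneqq \varpi \circ p$. Conjugating by $\T'_0\T'_1$ using $\E_0 = (\T'_0\T'_1)^{-1}\E_1\T'_0\T'_1$, this reduces to the corresponding statement for $\E_1$, with $\gamma(\theta)$ replaced by $(T'_0 T'_1)\gamma(\theta)$. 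The known $i=1$ case (a consequence of Proposition \ref{prop:action} and the standard categorical $\sl_2$ theory) then pins everything down if we can show that $\gamma$ intertwines the reflections $T_i$ on $\spn\{\alpha_j : j \in \hI\}$ with the endomorphisms $T'_i$ of $\End^2(\1_\uk)$ induced by the braid equivalences, and that each $\phi'_i$ acts trivially on $\End^2(\1_\uk)$.

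The heart of the proof is therefore the skew-side analog of Lemma \ref{lem:Taction1}. The triviality of $\phi'_i$ on $\End^2(\1_\uk)$ is immediate: by Corollary \ref{cor:phi}, $\phi'_i$ is tensoring with a line bundle, and tensoring by a line bundle acts as the identity on any Hom space between two copies of the same kernel. For the intertwining of $T_i$ with $T'_i$, the natural strategy is to exploit the deformation $\tY(\uk) \rightarrow \bA^n$ that defines $\varpi$ in (\ref{eq:gamma}): the deformation class $\varpi(\alpha_j)$ corresponds geometrically to the base direction $-x_j + x_{j+1}$. One shows that $\T'_i$ lifts to a kernel on a deformed correspondence inside $\tY(\uk) \times \tY(s_i\cdot\uk)$ compatible with the flip $x_i \leftrightarrow x_{i+1}$ on the base $\bA^n$. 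As in Lemma \ref{lem:Taction1}, this can be verified by restricting to the open locus where all eigenvalues of $z$ on $L_n/L_0$ are distinct, where $\T'_i$ is induced by an explicit graph of an isomorphism analogous to $\bZ_i^o(\uk)$; the induced map on characteristic polynomials swaps the $i$-th and $(i+1)$-th factor, yielding the desired $T'_i(\varpi(\alpha_j)) = \varpi(T_i(\alpha_j))$.

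With this lemma in hand, the final computation is identical to the one at the end of the proof of Theorem \ref{thm:action1}: one evaluates $T_1 T_0(\alpha_1) = T_1 \phi_1 \dots \phi_{n-1}(T_1 T_2 \cdots T_{n-1} \cdots T_2 T_1)(\alpha_1) = -\sum_{i\in I}\alpha_i$, and observes that $\la\theta, -\sum_{i\in I}\alpha_i\ra = \la\theta, \alpha_0\ra$ under the identification given by $p$, completing the verification of (\ref{co:theta}) for $\E_0$. The main obstacle, as indicated, is the geometric step in the third paragraph: verifying on the skew side (where the kernels carry extra twisting line bundles and the natural deformation $\tY(\uk)$ is only a subvariety of $\bY(\uk)$) that the braid kernels restrict to the expected graph of isomorphism over the generic locus of $\bA^n$. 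Once this is established, the algebraic bookkeeping is uniform with the symmetric-side argument.
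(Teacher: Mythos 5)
Your overall outline correctly mirrors the paper's proof: the paper's proof of Theorem~\ref{thm:action2} consists of a single line citing the argument of Theorem~\ref{thm:action1} with Lemma~\ref{lem:Taction2} substituted for Lemma~\ref{lem:Taction1}, so reducing everything to the skew-side analogue of Lemma~\ref{lem:Taction1} (and checking the $\phi'_i$-triviality via Corollary~\ref{cor:phi}, and computing $T_1T_0(\alpha_1)=-\sum_{i\in I}\alpha_i$) is exactly the right decomposition.

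Where you go astray is in the proposed proof of the key intertwining $T'_i\circ\varpi=\varpi\circ T_i$. You suggest verifying it \enquote{by restricting to the open locus where all eigenvalues of $z$ on $L_n/L_0$ are distinct,} as in Lemma~\ref{lem:Taction1}. That trick does not transfer. On the symmetric side the map $\gamma$ lands in $\Hom(\O_{\bY(\uk)},\O_{\bY(\uk)}\{2\})$, which is spanned by the trace functions $\tr_i$; these are honest regular functions pulled back from $\bA_\uk$, so they restrict to the generic locus and one can just compare pullbacks along the graph $\bZ_i^o$. On the skew side the target of $\varpi$ is $\Hom(\O_{\Delta_{Y(\uk)}},\O_{\Delta_{Y(\uk)}}[2]\{-2\})$, a space of degree-two Ext classes, and the image of $\varpi(\alpha_j)$ is the obstruction class of the first-order deformation $\tY(\uk)\to\bA^n$ at the origin. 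Moreover $Y(\uk)$ itself is precisely the fiber of $\tY(\uk)$ over $0$, where $z$ is nilpotent on $L_n/L_0$; there is no open locus of $Y(\uk)$ with distinct eigenvalues, so there is nothing to restrict to. The paper's Lemma~\ref{lem:Taction2} handles this by a genuinely different mechanism: it uses the obstruction-theoretic formalism of Appendix~\ref{sec:appdeform} together with the nontrivial input from \cite{CK3} (Theorem~A.10) that the braid kernel $\sT_i$ deforms over the first-order thickening $(\tY(\uk)\times\tY(s_i\cdot\uk))_{(T_i(\theta),\theta)}$. That deformability statement forces the obstruction $-\varpi(T_i(\theta))*\id*\id+\id*\id*\varpi(\theta)$ to vanish, which is then compared with the tautological vanishing of $-T'_i(\varpi(\theta))*\id*\id+\id*\id*\varpi(\theta)$. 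Your sketch gestures at \enquote{lifting $\T'_i$ to a deformed correspondence,} which is the right idea, but the verification you propose would not establish the required first-order statement at the central fiber, and the passage through obstruction classes rather than pullbacks of functions is precisely the content you are missing.
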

\begin{proof}
This follows from the exact same argument used to prove Theorem \ref{thm:action1}, where we just use Lemma \ref{lem:Taction2} in place of Lemma \ref{lem:Taction1}.
\end{proof}

\begin{Lemma}\label{lem:Taction2}
Working with $\K^{n,N}_{\Gr,m}$ we have the commutative diagrams
\begin{align*}
\xymatrix{
\spn\{\alpha_i: i \in \hI \} \ar[d]^{\varpi} \ar[r]^{T_i} & \spn\{\alpha_i: i \in \hI \} \ar[d]^{\varpi} & \spn\{\alpha_i: i \in \hI \} \ar[d]^\varpi \ar[r]^{id} & \spn\{\alpha_i: i \in \hI \} \ar[d]^\varpi \\
\End^2(\1_\uk) \ar[r]^{T'_i} & \End^2(\1_{s_i \cdot \uk}) & \End^2(\1_\uk) \ar[r]^{\phi'_i} & \End^2(\1_{\uk}) }
\end{align*}
where the map $T_i \in \End(\spn\{\alpha_i: i \in \hI \})$ is the standard action given by reflecting in $\alpha_i$ and $T'_i,\phi_i'$ are the isomorphisms induced by the equivalences $\T'_i, \phi_i'$.
\end{Lemma}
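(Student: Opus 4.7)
My plan is to follow the strategy used to prove Lemma~\ref{lem:Taction1}, but adapted to the skew geometry of $Y(\uk)$ and to the definition of $\varpi$ coming from the deformation $\tY(\uk) \to \bA^n$ (Appendix~\ref{sec:appdeform}). As there, I would first handle the right (easier) square and then turn to the left one.

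For the right square I would apply Corollary~\ref{cor:phi}, which identifies $\phi'_i \1_\uk$ with the functor of tensoring by the line bundle $\sL_i := \det(\sV_i/\sV_{i-1})^{-1} \otimes \det(\sV_{i+1}/\sV_i)$. The kernel for $\phi'_i$ is then $\O_{\Delta_{Y(\uk)}} \otimes \sL_i$, and the induced map on $\End^2(\1_\uk) = \Hom(\O_\Delta, \O_\Delta [2]\{-2\})$ is conjugation by the autoequivalence $(-) \otimes \sL_i$, which is the identity. So the right diagram commutes automatically.

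For the left square I would proceed in two steps. Step one: identify $\varpi(e_j)$ explicitly, where $e_j$ is the $j$-th standard basis vector of $\bZ^n = T_0 \bA^n$. The Kodaira--Spencer-type construction of Appendix~\ref{sec:appdeform}, applied to $\tY(\uk) \to \bA^n$, identifies $\varpi(e_j)$ with the class of the first-order deformation in the $x_j$-direction. Restricting to the open locus $Y(\uk)^o$ where the spectrum of $z$ on $L_n/L_0$ is simple (as in the proof of Lemma~\ref{lem:Taction1}), the $x_j$ become honest regular functions and $\varpi(e_j)|_{Y(\uk)^o}$ is literally the class of $x_j$. Step two: restrict $\T'_i$ to the open locus. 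Exactly as in Lemma~\ref{lem:Taction1}, on $Y(\uk)^o$ the functor $\T'_i$ is induced by the graph of an isomorphism $\bZ_i^o \subset Y(\uk)^o \times Y(s_i \cdot \uk)^o$ that swaps the roles of $L_{i+1}/L_i$ and $L_i'/L_{i-1}$; hence under the projections $\pi_1, \pi_2$ one has $\pi_1^*(x_j) = \pi_2^*(x_j)$ for $j \ne i, i+1$, $\pi_1^*(x_i) = \pi_2^*(x_{i+1})$, and $\pi_1^*(x_{i+1}) = \pi_2^*(x_i)$. Combined with Step one this shows $T'_i(\varpi(e_j)) = \varpi(T_i e_j)$ on the open locus, and since $Y(\uk)$ is smooth and $Y(\uk)^o$ is dense the identity extends to all of $Y(\uk)$. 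Writing $\alpha_j = -e_j + e_{j+1}$ then gives the claim for the left square.

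The main obstacle is Step one, i.e.\ matching the abstract definition of $\varpi$ in Appendix~\ref{sec:appdeform} with the concrete ``moving in the $x_j$-direction'' interpretation, keeping track of all signs, grading shifts, and conventions (recall that $\la 1 \ra = [1]\{-1\}$ on the skew side, so a $\{-2\}$ appears where a $\{2\}$ did on the symmetric side). Once this identification is in hand, Step two and the right square are formal and the argument parallels the proof of Lemma~\ref{lem:Taction1}.
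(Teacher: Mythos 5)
The right square is handled correctly and matches the paper: Corollary~\ref{cor:phi} identifies $\phi'_i$ with tensoring by a line bundle, whence the induced action on $\End^2(\1_\uk)$ is the identity.

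The left square is where your strategy breaks down, and the failure is structural rather than a matter of bookkeeping. You propose to mimic the proof of Lemma~\ref{lem:Taction1} by restricting to an open dense locus $Y(\uk)^o$ where $z$ has simple spectrum on $L_n/L_0$. But on $Y(\uk)$ the defining condition is $z L_i \subset L_{i-1}$, so $z^n$ kills $L_n/L_0$; the operator $z|_{L_n/L_0}$ is \emph{nilpotent} at every point, and every eigenvalue is $0$. For $N \ge 2$ the locus you want to restrict to is empty. (This is exactly what distinguishes $Y(\uk)$, the fiber of $\ch$ over $0 \in \bA^n$, from $\bY(\uk)$, where $z$ can have arbitrary spectrum and the generic-eigenvalue locus is dense.) Relatedly, the coordinates $x_j$ restrict to $0$ on $Y(\uk)$, so $\varpi(e_j)$ cannot be ``literally the class of $x_j$''; it lives in $\Ext^2(\O_{\Delta}, \O_{\Delta}\{-2\})$ and is a Kodaira--Spencer/obstruction class of the embedding $Y(\uk) \hookrightarrow \tY(\uk)$, not an honest function.

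The paper's actual argument is genuinely different and avoids this obstacle by staying in the deformation-theoretic framework throughout. Writing $\sT_i$ for the kernel of $\T'_i$, one notes that the identity $T'_i(\varpi(\theta)) * \mathrm{id} * \mathrm{id} = \mathrm{id} * \mathrm{id} * \varpi(\theta)$ holds by definition of $T'_i$. Separately, Theorem A.10 of \cite{CK3} shows that $\sT_i$ deforms over the first-order neighbourhood of $0$ in the direction $(T_i(\theta), \theta)$ inside $\bA^n \times \bA^n$ (i.e.\ along the locus where the base coordinates $x_i$ and $x_{i+1}$ are swapped on the two factors). The obstruction criterion of Huybrechts--Thomas recalled in Appendix~\ref{sec:appdeform} then says exactly that $-\varpi(T_i(\theta)) * \mathrm{id} * \mathrm{id} + \mathrm{id}*\mathrm{id}*\varpi(\theta) = 0$. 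Comparing the two equalities gives $T'_i(\varpi(\theta)) = \varpi(T_i(\theta))$. If you want to salvage your approach you would need to carry out the open-locus argument on $\tY(\uk)$ (where the generic locus is nonempty) and then explain why the conclusion descends to the central fiber $Y(\uk)$ --- but at that point you are essentially rederiving the deformation-theoretic statement from \cite{CK3}, and it is cleaner to invoke it directly as the paper does.
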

\begin{proof}
The fact that $\phi'_i$ acts trivially is a consequence of Corollary \ref{cor:phi}. 

Now denote by $\sT'_i$ the kernel for $\T'_i$. Then, for $\theta \in \spn\{\alpha_i: i \in \hI \}$, the 2-morphism
$$-T'_i(\varpi(\theta)) * id * id + id * id * \varpi(\theta) \in \End^2(\1_{s_i \cdot \uk} * \sT_i * \1_\uk)$$
is equal to zero, since $T'_i(\varpi(\theta)) * id * id = id * id * \varpi(\theta)$ by the definition of $T'_i$. 

On the other hand, it follows from \cite{CK3} that the kernel $\sT_i \in D(Y(s_i \cdot \uk) \times Y(\uk))$ deforms over $(\tY(\uk) \times \tY(s_i \cdot \uk))_{(T_i(\theta),\theta)} \rightarrow \Spec (\C[\vareps]/\vareps^2)$. More precisely, Theorem A.10 of \cite{CK3} shows that $\sT_i$ deforms to $\tY(\uk) \times \tY(s_i \cdot \uk)$ restricted to the locus 
$$(x_1,\dots,x_i,x_{i+1},\dots,x_n) \times (x_1, \dots,x_{i+1},x_i,\dots,x_n) \subset \bA^n \times \bA^n.$$ 
This implies the statement above. Subsequently, the discussion in Appendix \ref{sec:appdeform} implies that 
$$-\varpi(T_i(\theta)) * id * id + id * id * \varpi(\theta) \in \End^2(\1_{s_i \cdot \uk} * \sT_i * \1_\uk)$$
is equal to zero. it follows that $-T'_i(\varpi(\theta)) * id * id = -\varpi(T_i(\theta)) * id * id$ and hence $T'_i(\varpi(\theta)) = \varpi(T_i(\theta))$. 
\end{proof}

\section{Induced t-structures}\label{sec:t}

\subsection{Perverse coherent sheaves}\label{sec:pcoh}

In order to apply Theorem~\ref{thm:sym:main}, we need to start with a t-structure on $\tK(\eta)$. It turns out that for our applications perverse coherent t-structures are a good choice. Let us briefly recall their definition from \cite{AB}. In this subsection, $X$ will be a scheme with an action of an algebraic group $H$. 

We denote by $D(X)$ the bounded derived category of $H$-equivariant coherent sheaves on $X$ ($H$ could be trivial) and by $\Dqcoh{X}$ the corresponding unbounded category of $H$-equivariant quasi-coherent sheaves on $X$. 

Let $X^{\gen}$ be the topological space consisting of generic points of $H$-stable subschemes of $X$. For $x \in X^{\gen}$ and $\sF \in D(X)$ we write $\bi_x^* \sF$ for the usual stalk of $\sF$ at $x$ and $\bi_x^! \sF = \bi_x^* \Gamma_{\overline{x}} \sF$ the stalk of the local cohomology along the closure of $\{x\}$. We note that these are functors of sheaves of $\C$-modules, but not of $\O$-modules.

Choose a perversity function $p\colon X^{\gen} \to \bZ$ and define 
\begin{align*}
    {}^p\!D^{\le 0}(X) & = \bigl\{ \sF \in D(X) : \bi_x^* \sF \in D_{qcoh}^{\le p(x)}(\O_x) \text{ for all } x \in X^{\gen} \bigr\} \\
    {}^p\!D^{\ge 0}(X) & = \bigl\{ \sF \in D(X) : \bi_x^! \sF \in D_{qcoh}^{\ge p(x)}(\O_x) \text{ for all } x \in X^{\gen} \bigr\}.
\end{align*}

We say a perversity function $p$ is monotone if $p(x) \le p(y)$ whenever $y$ is contained in the closure of $x$. We say $p$ is comonotone if $\overline{p}(x) = - \dim(x) - p(x)$ is monotone. 

\begin{Theorem}\cite[Thm. 3.10]{AB}\label{thm:perverse}
If $p\colon X^\gen \rightarrow \bZ$ is monotone and comonotone then 
$$({}^p\!D^{\le 0}(X),\, {}^p\!D^{\ge 0}(X))$$
defines a t-structure on $D(X)$, called the perverse coherent t-structure. 
\end{Theorem}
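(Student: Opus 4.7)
The plan is to follow the Arinkin--Bezrukavnikov strategy, which is essentially Noetherian induction on the support of sheaves together with a gluing argument along closed/open decompositions.

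First, I would dispose of the easy axioms. The containments $D^{\le -1}(X) \subseteq D^{\le 0}(X)$ and $D^{\ge 1}(X) \subseteq D^{\ge 0}(X)$ are immediate from the definitions, since $\bi_x^*$ and $\bi_x^!$ are triangulated functors to $D_{qcoh}(\O_x)$ and the corresponding inequalities hold there. It also reduces the problem to verifying (i) $\Hom(\sF, \sG[1]) = 0$ for $\sF \in {}^p\!D^{\le 0}$, $\sG \in {}^p\!D^{\ge 0}$, and (ii) every $\sF \in D(X)$ fits into a distinguished triangle $\tau^{\le 0}\sF \to \sF \to \tau^{\ge 1}\sF$ with $\tau^{\le 0}\sF \in {}^p\!D^{\le 0}$ and $\tau^{\ge 1}\sF \in {}^p\!D^{\ge 1}$.

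For the vanishing in (i), I would proceed by Noetherian induction on the support of $\sF$. If $Z = \overline{\{x\}}$ is irreducible and $\sF$ is supported on $Z$, then by devissage one can replace $\sF$ by a sheaf on $Z$; applying $\bi_x^!$ to $\sG$ and $\bi_x^*$ to $\sF$, the hypothesis on $p(x)$ gives the required $\Ext$ bound at the generic stalk. Propagating this to nongeneric points $y \in Z$ requires precisely the monotonicity condition $p(y) \ge p(x)$ for $y \in \overline{\{x\}}$, together with the comonotonicity condition which controls the local cohomology as one passes to the boundary. A filtration of $\sF$ by support then handles the general case.

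For the truncation existence in (ii), I would proceed by Noetherian induction: choose a minimal closed $H$-stable subscheme $Z = \overline{\{x\}}$ in the support of $\sF$ with open complement $j\colon U = X \setminus Z \hookrightarrow X$ and inclusion $i\colon Z \hookrightarrow X$. The induction hypothesis gives perverse t-structures on $D(U)$ (with restricted perversity) and on $D(Z)$ locally around $x$ (where the perversity contributes a single shift $p(x)$). Then one glues these via the standard recollement to produce truncation functors on $D(X)$, with the monotone and comonotone conditions translating exactly into the t-exactness properties of $i_*, i^!, i^*, j^*, j_*$ needed for the gluing to be valid. The main obstacle here is that, in the coherent setting, the functor $j_!$ does not land in the bounded derived category, so one has to either pass to the ind-/pro-coherent enlargement (as in Lemma~\ref{lem:conservative_and_compactness}) to perform the gluing and then verify that the resulting truncations preserve boundedness, or alternatively construct the truncations directly via a Cousin-type filtration adapted to a stratification refining the perversity function. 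Either way, the delicate point is combining the local information at each $x \in X^\gen$ into a globally defined truncation, which is where both the monotonicity and comonotonicity are used in an essential way.
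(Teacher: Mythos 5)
The paper does not prove this statement; it is cited directly from Arinkin--Bezrukavnikov \cite{AB}, so there is no ``paper's own proof'' to compare against -- only theirs.

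Your outline captures the broad strategy of \cite{AB} (which is itself a clean-up of Deligne's unpublished argument): Noetherian induction on support together with a closed/open gluing, and you correctly flag the central obstruction that $j_!$ does not exist in the bounded coherent setting, so BBD-style recollement cannot be invoked verbatim. However, having identified the obstruction you do not actually resolve it: the two workarounds you float (passing to ind-coherent sheaves and then checking boundedness, or a Cousin-type filtration) are not what \cite{AB} do, and neither is worked out far enough to see it go through. The Arinkin--Bezrukavnikov construction of truncation functors is a direct recursion -- for a minimal stratum $Z$ with complement $U$ one assumes the truncation of $j^*\sF$ on $U$ exists, lifts it to $X$, and then corrects along $Z$ by pushing forward a truncation on $Z$; this is done entirely inside $D^b_{coh}$ and uses the monotone/comonotone hypotheses to verify that the corrected object lands in the right half-categories. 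That correction step is precisely the content you would need to supply, and sketching two possible alternatives without committing to either leaves the main gap open. Two further points are glossed over. First, the orthogonality $\Hom({}^p\!D^{\le 0},{}^p\!D^{\ge 1})=0$ does not follow immediately from the pointwise bounds on $\bi_x^*$ and $\bi_x^!$; one needs the devissage using $\Gamma_Z$ and a comparison of global $\Hom$ with the local $\Hom_{\O_x}(\bi_x^*\sF,\bi_x^!\sG)$, i.e., a local-to-global argument that propagates across the stratification. Second, the comonotonicity condition acts through local duality and hence requires a dualizing complex (and finite Krull dimension), hypotheses that are implicit in the theorem statement and should be mentioned since they are what make $\bi_x^!$ usable at all.
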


Common choices for perversity functions include the standard perversity $p(x) = 0$ (yielding the standard t-structure), the dual standard perversity $p(x) = -\dim(x)$ and the middle perversity $p(x) = - \frac{1}{2} \dim(x) $. Strictly speaking the middle perversity function is only defined if all $\dim(x)$ are even. However, we will allow $\dim(x)$ to be odd by rounding $- \frac{1}{2} \dim(x)$ either up or down. This leads to a family of functions (depending on the rounding choices we make). Abusing terminology slightly we will still refer to a function in this family as a middle perversity function. 

From the definition above it follows easily that the perverse coherent t-structure on $D(X)$ is ``local'' in the sense of \cite[Section~2.3]{Po}. In particular this means that tensoring with any vector bundle is a t-exact functor. This is one of the main properties that will be useful in the next section. Another one is the following.

\begin{Lemma}\cite[Lemma 3.3]{AB}\label{lem:pcoh}
Consider a perversity function $p\colon X^\gen \rightarrow \bZ$ and for a closed subscheme $i\colon Y \hookrightarrow X$ denote by $p\colon Y^\gen \rightarrow \bZ$ the induced perversity function given by composition. Then $i_*\colon D(Y) \rightarrow D(X)$ is t-exact with respect to the perverse coherent t-structures on $D(Y)$ and $D(X)$. 
\end{Lemma}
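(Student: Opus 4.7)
The plan is to verify t-exactness pointwise, by checking the defining conditions of the perverse coherent t-structures on stalks $\bi_x^*$ and costalks $\bi_x^!$ separately for the two cases $x \in X^{\gen} \setminus Y$ and $x \in Y^{\gen} \subseteq X^{\gen}$. Note first that $Y^{\gen}$ sits inside $X^{\gen}$: the generic point of an $H$-stable closed subscheme $Z \subseteq Y$ is also the generic point of $Z$ regarded as an $H$-stable closed subscheme of $X$, and by construction the induced perversity function on $Y^{\gen}$ is just the restriction of $p$.

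For the first case, suppose $x \in X^{\gen}$ is not contained in $Y$. Since $Y \subseteq X$ is closed, the open complement $X \setminus Y$ is a neighborhood of $x$ on which $i_*\sF$ is zero. Consequently both $\bi_x^*(i_*\sF) = 0$ and $\bi_x^!(i_*\sF) = 0$, so the membership conditions for ${}^p\!D^{\le 0}(X)$ and ${}^p\!D^{\ge 0}(X)$ are automatic at such $x$ regardless of the value of $p(x)$.

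For the second case, $x \in Y^{\gen}$, I would use that $i_*$ is exact on sheaves of $\C$-modules with $(i_*\sG)_x \cong \sG_x$ for $x \in Y$; this directly gives $\bi_x^*(i_*\sF) \cong \bi_x^*\sF$ in $\Dqcoh{\O_x}$. For the costalk I would use the identification $\Gamma_{\overline{\{x\}}} \circ i_* \cong i_* \circ \Gamma_{\overline{\{x\}}}$ (valid because $\overline{\{x\}} \subseteq Y$), so that taking the stalk at $x$ yields $\bi_x^!(i_*\sF) \cong \bi_x^!\sF$. Since the perversity values match by construction, the required cohomological bounds transfer verbatim.

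Combining the two cases gives $i_*\bigl({}^p\!D^{\le 0}(Y)\bigr) \subseteq {}^p\!D^{\le 0}(X)$ and $i_*\bigl({}^p\!D^{\ge 0}(Y)\bigr) \subseteq {}^p\!D^{\ge 0}(X)$, establishing t-exactness. The main technical point to justify carefully is the commutation $\Gamma_{\overline{\{x\}}} \circ i_* \cong i_* \circ \Gamma_{\overline{\{x\}}}$ for $x \in Y^{\gen}$; this is where the closed-embedding hypothesis on $i$ is genuinely used, since local cohomology along $\overline{\{x\}} \subseteq Y$ can then be computed either on $Y$ or on $X$ with matching results. Everything else in the argument is essentially unpacking the definitions of ${}^p\!D^{\le 0}$ and ${}^p\!D^{\ge 0}$.
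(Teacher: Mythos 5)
Your argument is correct, and a clean writeup of the standard proof. Note, though, that the paper itself does not prove this lemma — it is cited verbatim from Arinkin--Bezrukavnikov \cite[Lemma 3.3]{AB} — so there is no in-paper argument to compare against; what you have written is essentially the argument appearing in \cite{AB}.

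Two minor points worth making explicit. First, your case split implicitly uses that $X^{\gen} \cap Y = Y^{\gen}$: the inclusion $Y^{\gen} \subseteq X^{\gen} \cap Y$ you note, but you also need the converse, which holds because for $x \in Y$ the closure $\overline{\{x\}}$ computed in $X$ lies in $Y$ (as $Y$ is closed), so $H$-stability of $\overline{\{x\}}$ in $X$ and in $Y$ are the same condition. Second, for $x \in Y^{\gen}$ the objects $\bi_x^*(i_*\sF)$ and $\bi_x^*\sF$ live over the local rings $\O_{X,x}$ and $\O_{Y,x}$ respectively, and these differ; but the surjection $\O_{X,x} \twoheadrightarrow \O_{Y,x}$ gives an exact restriction-of-scalars functor, so the cohomological amplitude — which is all the perverse conditions see — is unchanged. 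The commutation $\Gamma_{\overline{\{x\}}} \circ i_* \cong i_* \circ \Gamma_{\overline{\{x\}}}$ that you single out as the main technical point is indeed the crux; it follows from flat base change applied to the open complement $j\colon X \setminus \overline{\{x\}} \hookrightarrow X$ together with exactness of $i_*$ and the triangle $\Gamma_Z \to \id \to j_* j^*$.
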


We call the t-structure on $D(Y)$ corresponding to the pullback of the perversity function $p\colon X^\gen \rightarrow \bZ$ the ``$i$-induced perverse coherent t-structure''.

\subsection{Induced t-structures on $D(\bY(\uk))$}\label{sec:induced1}

We begin with a couple of technical results. Recall that, as in Section \ref{sec:notation}, we have fixed a subgroup $H \subseteq GL_m[[z]]$ and all categories $D(\bY(\uk))$ are $H \rtimes \C^\times$ equivariant. 

\begin{Lemma}\label{lem:A}
For $k,l \in \N$ consider the projection 
$$\pi\colon \bY(k,l) \rightarrow \bY(k+l) = \{L_0 \subset L_2: zL_2 \subset L_2, \dim(L_2/L_0)=k+l \}$$ 
which forgets $L_1$. Then $\pi_* \det(\sV_1)^{-1} \cong \Lambda^k(\sV_2)^{-1}$ and $\pi_* \det(\sV_1) \cong \Lambda^k(\sV_2) \{-2kl\}$. 
\end{Lemma}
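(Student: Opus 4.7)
The strategy is to realize $\bY(k,l)$ as a closed subvariety of a relative Grassmann bundle over $\bY(k+l)$, use the resulting Koszul resolution to rewrite the pushforward, and then carry out the remaining computation via (relative) Borel--Weil--Bott.

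Concretely, I would introduce the relative Grassmann bundle $\rho\colon \Gr(k,\sV_2) \to \bY(k+l)$ parametrizing rank-$k$ subbundles of the rank-$(k+l)$ bundle $\sV_2$, with tautological sequence $\sT \hookrightarrow \rho^*\sV_2 \twoheadrightarrow \sQ$. The $z$-action on $\sV_2$ (of $\C^\times$-weight $2$) induces a map
\[ \sigma\colon \sT \hookrightarrow \rho^*\sV_2 \xrightarrow{z} \rho^*\sV_2 \twoheadrightarrow \sQ, \]
which is a global section of the rank-$kl$ bundle $\sT^\vee \otimes \sQ\{2\}$. Its zero locus is precisely the $z$-stability locus, i.e.\ $\bY(k,l)$, with $\sT|_{\bY(k,l)} \cong \sV_1$ and $\sQ|_{\bY(k,l)} \cong \sV_2/\sV_1$. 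A dimension count gives $\dim\Gr(k,\sV_2)-\dim\bY(k,l)=kl$, so (since $\Gr(k,\sV_2)$ is smooth over the smooth base) the section is regular and we obtain the Koszul resolution
\[ i_*\O_{\bY(k,l)} \simeq \bigl[\Lambda^{kl}\sE \to \cdots \to \sE \to \O_{\Gr(k,\sV_2)}\bigr], \qquad \sE \coloneqq \sT \otimes \sQ^\vee\{-2\}, \]
placed in non-positive cohomological degrees.

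For the first isomorphism, I would tensor the Koszul resolution by $\det(\sT)^{-1}$ and apply $\rho_*$. Using the Cauchy decomposition $\Lambda^j(\sT \otimes \sQ^\vee) \cong \bigoplus_{|\lambda|=j}\Sigma^\lambda\sT \otimes \Sigma^{\lambda^t}\sQ^\vee$, every term in the resolution breaks up into Schur summands $\Sigma^{(1^k)+\lambda}\sT \otimes \Sigma^{\lambda^t}\sQ^\vee$ whose $\rho_*$ is controlled by relative Borel--Weil--Bott on $\Gr(k,\sV_2)$. A direct analysis of dominance (after adding the half-sum of positive roots to the concatenated weight) shows that all summands with $j \ge 1$ give vanishing pushforward, while the $j=0$ piece yields $\rho_*\det(\sT)^{-1} \cong (\Lambda^k\sV_2)^\vee$ in cohomological degree zero. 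This produces the claimed $\Lambda^k(\sV_2)^{-1}$.

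For the second isomorphism, the cleanest route is Grothendieck--Serre duality for the proper morphism $\pi$. Since $\bY(k,l)$ and $\bY(k+l)$ are smooth of the same dimension, $\pi^!\O \cong \omega_\pi$ is a line bundle, and combined with the first formula this expresses $\pi_*\det(\sV_1)$ as the dual of the first, twisted by $\omega_\pi$. The conormal bundle to $\bY(k,l) \subset \Gr(k,\sV_2)$ is (the restriction of) $\sE^\vee = \sT^\vee \otimes \sQ\{2\}$ of rank $kl$, which together with the relative canonical of $\rho$ produces precisely the $\C^\times$-weight shift $\{-2kl\}$ and replaces the dual by $\Lambda^k\sV_2$ via the perfect wedge pairing $\Lambda^k\sV_2 \otimes \Lambda^l\sV_2 \to \det\sV_2$ in rank $k+l$. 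Equivalently, one can rerun the Koszul/BWB computation with $\det(\sT)$ in place of $\det(\sT)^{-1}$: now $\rho_*\det(\sT)$ itself vanishes on every Grassmannian fiber, and the only surviving term is the top one $\Lambda^{kl}\sE \otimes \det\sT$, whose $\{-2\}^{\otimes kl}$ contributes the shift.

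The main obstacle is the Borel--Weil--Bott bookkeeping: one must identify the unique Schur summand whose weight, after adding the Weyl vector, is regular and lies in the correct chamber, and verify the vanishing of all other summands term by term in the spectral sequence. A secondary point, important for the second formula, is carefully tracking the $\C^\times$-weights through the Koszul differential so that the shift $\{-2kl\}$ emerges in the correct cohomological degree.
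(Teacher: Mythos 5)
Your proposal follows essentially the same route as the paper: realize $\bY(k,l)$ as the zero locus of the regular section $\sigma\colon\sT\to\sQ\{2\}$ of the rank-$kl$ bundle $\sT^\vee\otimes\sQ\{2\}$ on the relative Grassmannian $\G(k,\sV_2)\to\bY(k+l)$, form the Koszul resolution with terms $\Lambda^j(\sT\otimes\sQ^\vee)\{-2j\}=\Omega_\rho^j\{-2j\}$, tensor by the relevant determinant line, push forward, and for the second formula use Grothendieck--Verdier duality together with the codimension-$kl$ twist to convert back. The geometry and the structure of the argument are identical.

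The one place you diverge is in justifying the vanishing of all terms $j\ge 1$. You propose to decompose $\Lambda^j(\sT\otimes\sQ^\vee)$ via the Cauchy formula and then kill each Schur summand $\Sigma^\lambda\sT\otimes\Sigma^{\lambda^t}\sQ^\vee\otimes\det(\sT)^{-1}$ by a term-by-term Borel--Weil--Bott analysis. That works, but it requires nontrivial bookkeeping with Weyl chambers across infinitely many $\lambda$. The paper short-circuits this entirely by observing that the term appearing is exactly $\Omega^j_{\G(k,k+l)}\otimes\O(1)$ on each fiber (since $\det(\sT)^{-1}$ is the Pl\"ucker $\O(1)$ and $\Omega_\rho^1=\sT\otimes\sQ^\vee$), and then citing Le Poitier's vanishing $H^p(\G(k,k+l),\Omega^q(1))=0$ for $(p,q)\ne(0,0)$. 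This gives the same conclusion in one stroke. Your version trades a citation for a longer computation; neither is wrong, but if you carry out the Cauchy/BWB route you should be careful to verify \emph{every} partition contributes zero, rather than only the extremal ones — it is easy to miss intermediate shapes whose weight lands on a wall or gives cohomology in an unexpected degree, and the Le Poitier result is precisely the assertion that this never happens.

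One small caution on your alternative derivation of the second formula by rerunning Koszul with $\det(\sT)$: you correctly identify that the only surviving summand is the top one $\Lambda^{kl}\sE\otimes\det\sT$, but to place the output in cohomological degree $0$ you must check that this term's fiberwise cohomology sits in degree exactly $kl$ (so it cancels the Koszul degree $-kl$); that is a Serre-duality computation that again reduces to Le Poitier's $j=0$ case. The duality route you list first — Verdier duality plus the computation of $\omega_\pi$ from \cite[Lemma~5.2]{CK4} — is indeed cleaner and is exactly what the paper does.
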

\begin{proof}
Consider the embedding $\bY(k,l) \xrightarrow{i} \G(k,\sV_2) \xrightarrow{p} \bY(k+l)$ where $\G(k,\sV_2)$ denotes the relative Grassmannian of $k$-planes in the $(k+l)$-dimensional vector bundle $\sV_2$ on $\bY(k+l)$. Notice that the codimension of $i$ is $kl$. On the other hand, we have the natural map of vector bundles on $\G(k,\sV_2)$
$$z\colon \sV_1 \rightarrow \sV_2/\sV_1 \{2\}$$
where, abusing notation slightly, $\sV_1$ denotes the tautological $k$-dimensional bundle on $\G(k,\sV_2)$. This map vanishes precisely over the image of $i$ (i.e. the locus where $z$ maps $\sV_1$ to itself). In particular, we get a global section of $(\sV_2/\sV_1) \otimes \sV_1^\vee \{2\}$ which vanishes along the image of $i$. Notice that the dimension of this vector bundle is equal to $kl$ which is the codimension of $i$. Moreover, one can check that generically the vanishing locus of this section is reduced. It follows that we get a Koszul resolution of $i_* \O_{\bY(k,l)}$ given by 
$$\dots \rightarrow \Omega_p^2 \{-4\} \rightarrow \Omega_p^1 \{-2\} \rightarrow \O_{\G(k,\sV_2)} \rightarrow i_* \O_{\bY(k,l)}$$
where $\Omega_p^k = \Lambda^k \Omega_p^1$ and $\Omega_p^1 = \sV_1 \otimes (\sV_2/\sV_1)^\vee$ is the relative cotangent bundle of $p$. Tensoring everything with $\det(\sV_1)^{-1}$ and using that 
$$H^p(\G(k,k+l), \Omega_{\G(k,k+l)}^q(1)) = 0 \ \text{ if } (p,q) \ne (0,0)$$
(see \cite[Corollaire 1]{LP}) we get that 
$$\pi_* \det(\sV_1)^{-1} \cong p_* i_* \O_{\bY(k,l)} \cong R^0p_* \left( \O_{\G(k,\sV_2)} \otimes \det(\sV_1)^{-1} \right) \cong \Lambda^k(\sV_2)^\vee.$$

Applying Verdier duality to this result we get 
\begin{align*}
\pi_* \det(\sV_1) 
&\cong \pi_* \D (\det(\sV_1)^{-1} \otimes \omega_{\bY(k,l)}) [-\dim \bY(k,l)] \\
&\cong \D \pi_* (\det(\sV_1)^{-1} \otimes \omega_\pi \otimes \pi^* \omega_{\bY(k+l)}) [-\dim \bY(k,l)] \\
&\cong \D (\pi_* (\det(\sV_1)^{-1} \{2kl\}) \otimes \omega_{\bY(k+l)}) [-\dim \bY(k+l)] \\
&\cong \D (\Lambda^k (\sV_2)^\vee) \{-2kl\} \otimes \omega_{\bY(k+l)}^{-1} [-\dim \bY(k+l)] \\ 
&\cong \Lambda^k (\sV_2) \{-2kl\}
\end{align*}
where we used \cite[Lemma 5.2]{CK4} to determine $\omega_\pi$ in the third line. This completes the proof. 
\end{proof}

\begin{Corollary}\label{cor:A}
For $0 \le \ell \le N$ the endomorphism $\A^{(\ell)}$ of $D(\bY(\eta))$ is given by tensoring with $\Lambda^\ell(\sV_n)$ while $\A^{(-\ell)}$ is tensoring with $\Lambda^\ell(\sV_n)^\vee$. 
\end{Corollary}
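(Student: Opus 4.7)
The plan is to compute $\A^{(\ell)}$ explicitly as a composition of pullbacks and pushforwards along the natural projection $\pi\colon \bY(\ell, N-\ell) \to \bY(N) = \bY(\eta)$ which forgets the intermediate subspace, and then invoke Lemma~\ref{lem:A} to collapse the expression into tensor product with $\Lambda^\ell(\sV_n)$.

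First I would trace the weights encountered by $\A^{(\ell)}$ starting at $\eta$: successively $(0,\dots,0,\ell,N-\ell)$, $(0,\dots,\ell,0,N-\ell)$, $\dots$, $(\ell,0,\dots,0,N-\ell)$, and back to $\eta$. Each intermediate variety $\bY((0,\dots,0,\ell,0,\dots,0,N-\ell))$ is canonically isomorphic to $\bY(\ell,N-\ell)$ because one of the two entries adjacent to $\ell$ is zero. Correspondingly, for $1 \le i \le n-2$ the correspondence $\bY_i^\ell$ underlying $\F_i^{(\ell)}$ is the graph of such an isomorphism, so each of these middle functors acts by tensoring with a line bundle built from the tautological $\sV_\bullet$, together with explicit $\{\cdot\}$ grading shifts.

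The essential geometric content lies in the two outer steps. The functor $\F_{n-1}^{(\ell)}$ has its kernel supported on the graph of $\pi$, hence is $\pi^\ast$ twisted by an explicit line bundle. The main obstacle is $\F_0^{(\ell)}$, which is defined not directly by a geometric correspondence but via the braid conjugation $\F_0 = (\T_0'\T_1')^{-1}\F_1\T_0'\T_1'$. To handle it, I would unwind the definition of $\T_0'$ from \eqref{eq:T'0def} and use Lemma~\ref{lem:Taction1} together with the fact that $\phi_i'$ acts by tensor with a line bundle, thereby identifying $\F_0^{(\ell)}\colon D(\bY(\ell,N-\ell)) \to D(\bY(\eta))$ with the pushforward $\pi_\ast$ modified by a specific line-bundle twist.

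Putting the pieces together, the composition $\A^{(\ell)}$ takes the form $\pi_\ast(\pi^\ast(-) \otimes \det(\sV_1)\{c\})$ on $\bY(\ell,N-\ell)$ for some accumulated shift $c$ coming from the divided-power kernels. Comparing with Lemma~\ref{lem:A}, which yields $\pi_\ast\det(\sV_1) \cong \Lambda^\ell(\sV_n)\{-2\ell(N-\ell)\}$, and verifying that $c = 2\ell(N-\ell)$ so that all shifts cancel (routine bookkeeping given the way divided powers are designed), one concludes $\A^{(\ell)} \cong (-)\otimes\Lambda^\ell(\sV_n)$. The statement for $\A^{(-\ell)}$ follows by the parallel argument using $\E_i^{(\ell)}$ in place of $\F_i^{(\ell)}$ and the other formula $\pi_\ast\det(\sV_1)^{-1} \cong \Lambda^\ell(\sV_n)^\vee$ from Lemma~\ref{lem:A}.
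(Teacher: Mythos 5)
Your overall plan is the right one — express $\A^{(\ell)}$ as a push--pull along the projection $\pi$ and collapse the answer with Lemma~\ref{lem:A} — and the paper's proof does essentially this after first reducing to $n=2$. But the crucial step, identifying $\F_0^{(\ell)}$, is where your sketch has a genuine gap. You propose to ``unwind the definition of $\T_0'$ from \eqref{eq:T'0def} and use Lemma~\ref{lem:Taction1} together with the fact that $\phi_i'$ acts by tensor with a line bundle'' to show $\F_0^{(\ell)}$ is a twisted $\pi_\ast$. Lemma~\ref{lem:Taction1} cannot do this: it only records how the linear map $\gamma\colon\spn\{\alpha_i\}\to\End^2(\1_\uk)$ transforms under $T'_i$ and $\phi'_i$, i.e.\ how the generators permute the deformation classes $\tr_j$. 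It says nothing about the kernel of $\F_0$ as an object of $D(\bY\times\bY)$, which is what you need. Knowing that $\phi'_i$ is a line-bundle twist also doesn't by itself hand you a geometric description of $\T'_0$ or of the conjugated functor $\F_0 = (\T'_0\T'_1)^{-1}\F_1\T'_0\T'_1$, since $\T'_i$ itself is not a line-bundle twist.

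The paper sidesteps the need for such a description. Having reduced to $n=2$, it observes $\F_0^{(\ell)}\cong \T'_1(\F'_{1,-1})^{(\ell)}(\T'_1)^{-1}$ and then invokes the relation \eqref{eq:TE=FT} to push $(\T'_1)^{-1}$ through the remaining $\F_1^{(\ell)}$, turning it into $\E_1^{(\ell)}$ up to a computable $\{\cdot\}$-shift. The net effect is
\[
\F_0^{(\ell)}\F_1^{(\ell)}\1_{(0,N)} \cong \T'_1\,(\F'_{1,-1})^{(\ell)}\E_1^{(\ell)}\,(\T'_1)^{-1}\1_{(0,N)}\{-\ell(N-\ell)\},
\]
and now the \emph{interior} $(\F'_{1,-1})^{(\ell)}\E_1^{(\ell)}$ is a genuine geometric correspondence (both $\E_1$ and $\F'_{1,-1}$ have explicit kernels), immediately of the form $\pi_\ast(\pi^\ast(-)\otimes\det(\sV_2/\sV_1)\{\cdot\})$; the outer $\T'_1$ conjugation is a canonical equivalence by Lemma~\ref{lem:canonical} since one weight entry is zero. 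You would need to replace your appeal to Lemma~\ref{lem:Taction1} by some version of this maneuver (or prove an explicit kernel description of $\F_0$ directly, which is harder). Two minor additional remarks: you apply the second half of Lemma~\ref{lem:A} on $\bY(\ell,N-\ell)$ where the paper uses the first half on $\bY(N-\ell,\ell)$ --- both work; and your parallel argument for $\A^{(-\ell)}$ is fine, though the paper gets it for free by adjunction.
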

\begin{proof}
Suppose $\ell \ge 0$. It suffices to consider the case $n=2$ where we have $\bY(0,N)$ and $\A^{(\ell)}$ is the composition 
$$D(\bY(0,N)) \xrightarrow{\F_1^{(\ell)}} D(\bY(\ell,N-\ell)) \xrightarrow{\F_0^{(\ell)}} D(\bY(0,N)).$$
Now $\F_0^{(\ell)} \cong \T'_1 (\F'_{1,-1})^{\ell} (\T'_1)^{-1}$ so we get 
\begin{align*}
\F_0^{(\ell)} \F_1^{(\ell)} \1_{(0,N)} 
&\cong \T'_1 (\F'_{1,-1})^{(\ell)} (\T'_1)^{-1} \F_1^{(\ell)} \1_{(0,N)} \\
&\cong \T'_1 (\F'_{1,-1})^{(\ell)} \E_1^{(\ell)} (\T'_1)^{-1} \1_{(0,N)} \{-\ell(N-\ell)\}
\end{align*}
where we used \eqref{eq:TE=FT} to get the second isomorphism. Now consider the composition $(\F'_{1,-1})^{(\ell)} \E_1^{(\ell)}$ in the middle. Following the definition from \cite[Section 5]{CK4} this is the functor
$$\pi_*(\pi^*(\cdot) \otimes \det(\sV_2/\sV_1) \{\ell(N-\ell)\})$$
where $\pi\colon \bY(N-\ell,\ell) \rightarrow \bY(N,0)$ is the natural projection. Applying the projection formula and using Lemma \ref{lem:A} this equivalent to tensoring by 
$$\Lambda^{N-\ell}(\sV_2)^\vee \otimes \det(\sV_2) \{\ell(N-\ell)\} \cong \Lambda^\ell(\sV_2) \{\ell(N-\ell)\}.$$ 
The result follows. The case of $\ell \le 0$ is obtained by taking adjoints and noting that
$(\F_0^{(\ell)} \F_1^{(\ell)})^R \cong \E_1^{(\ell)} \E_0^{(\ell)}$. 
\end{proof}

\begin{Theorem}\label{thm:sym-app}
Fix a perverse coherent t-structure on $\bY(\eta)$. Then there exists a unique t-structure on $\oplus_\uk D(\bY(\uk))$ which agrees with the chosen t-structure on $D(\bY(\eta))$ and such that $\E_i,\F_i$ are t-exact for all $i \in \hI$. Moreover, this t-structure is braid positive. 
\end{Theorem}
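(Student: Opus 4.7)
The plan is to apply Theorem~\ref{thm:sym:main} to the $(\hgl_n,\t)$ action of Theorem~\ref{thm:action1}, taking $\tK(\eta) = D(\bY(\eta))$ equipped with the prescribed perverse coherent t-structure. Two hypotheses must be verified: (i) each endofunctor $\A^{(\ell)}$ of $\tK(\eta)$, for $-N \le \ell \le N$, is t-exact; and (ii) $D(\bY(\mu))$ is weakly generated by the image of $D(\bY(\eta))$ under the $\hgl_n$ action.

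Hypothesis (i) is immediate: Corollary~\ref{cor:A} identifies $\A^{(\ell)}$ with the functor of tensoring by the vector bundle $\Lambda^{|\ell|}(\sV_n)^{\pm 1}$, and any perverse coherent t-structure is local in the sense of \cite[Section~2.3]{Po}, so tensoring with a vector bundle is automatically t-exact. The grading shift $\{1\}$ is also trivially t-exact, as it only modifies the $\C^\times$-equivariant weight.

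For hypothesis (ii), the projection $\pi\colon \bY(\mu) \to \bY(\eta)$ forgetting the intermediate lattices $L_{n-N+1},\dotsc, L_{n-1}$ realizes $\bY(\mu)$ as an iterated flag bundle over $\bY(\eta)$. By the relative flag-bundle decomposition, $D(\bY(\mu))$ is generated as a triangulated category by pullbacks $\pi^*(-)$ twisted by finitely many products of tautological line bundles $\det(\sV_j)^{a_j}$. The planar tree 1-morphism
\[
    \Psi = \F_{n-N+1}^{(1)} \F_{n-N+2}^{(2)} \dotsb \F_{n-1}^{(N-1)} \colon \tK(\eta) \rightarrow \tK(\mu)
\]
realizes $\pi^*$ up to grading shifts, as can be checked directly from the kernels in Section~\ref{sec:cattk}. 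Pre-composing $\Psi$ with the endofunctors $\A^{(\ell)}$ on $\tK(\eta)$ (which produce twists by $\Lambda^\ell(\sV_n)^{\pm 1}$ in the source), and post-composing with the endofunctors $\phi_j'$ on $\tK(\mu)$---which by \cite[Corollary~7.3]{CK4} act by tensoring with appropriate tautological line bundles---yields all the twists required to hit the generators produced by the flag-bundle decomposition. This gives the required weak generation.

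With (i) and (ii) in hand, Theorem~\ref{thm:sym:main} delivers the existence, uniqueness, t-exactness of all $\E_i$ and $\F_i$, and braid positivity in a single stroke. The principal obstacle is (ii): one must match the categorical tree morphism with the geometric pullback and verify that the endomorphism algebra coming from the action is rich enough in line-bundle twists to reach the full generating set coming from the relative flag decomposition.
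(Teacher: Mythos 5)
Your outline matches the paper's strategy up to the point where weak generation of $D(\bY(\mu))$ must be established, but your argument for that step contains a genuine error. You assert that the projection $\pi\colon\bY(\mu)\to\bY(\eta)$ forgetting the intermediate lattices realizes $\bY(\mu)$ as an iterated flag bundle over $\bY(\eta)$, and then invoke the relative flag-bundle decomposition of the derived category. This is false: because of the constraint $zL_i\subset L_i$, the fiber of $\pi$ over a point $L_0\subset L_n$ is the variety of complete $z$-stable flags in $L_n/L_0$, i.e.\ a Steinberg/Springer-type fiber whose structure depends on the Jordan type of $z\rvert_{L_n/L_0}$. Indeed $\dim\bY(\mu)=mN=\dim\bY(\eta)$, so $\pi$ is generically finite (a finite cover over the locus where $z\rvert_{L_n/L_0}$ is regular semisimple) with positive-dimensional fibers over degenerate loci. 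In particular $\pi$ is not flat, let alone an iterated $\P^{k}$-bundle, and the generation statement you want does not follow from it.

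The paper sidesteps this by working not with $\pi$ but with the characteristic-polynomial map $\ch\colon\bY(\mu)=\bY(1^N)\to\bA^N$ recording the eigenvalue of $z$ on each one-dimensional $L_i/L_{i-1}$. This map really does have iterated $\P^{m-1}$-bundle fibers, from which one deduces that $D(\bY(\mu))$ is weakly generated by finitely many line bundles $\prod_i\sL_i^{a_i}$ with $\sL_i=\det(\sV_i/\sV_{i-1})$ and $\sum_i a_i\equiv 0\pmod N$. One then checks, as you correctly anticipate for the second half of the argument, that the planar tree morphism is $\pi^*$ (hence sends $\det(\sV_1/\sV_0)$ to $\prod_i\sL_i$), and that post-composition with the endofunctors $\phi_i'$ --- tensoring by $\sL_i^{-1}\otimes\sL_{i+1}$ --- sweeps out exactly the lattice of exponents $(a_i)$ with $\sum_i a_i\equiv 0\pmod N$. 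So the ingredients at the end of your sketch are the right ones, but the crucial intermediate claim that reduces weak generation to finitely many line bundles must be obtained from $\ch$, not from $\pi$.
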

\begin{proof}
We apply Theorem \ref{thm:sym:main}. By Corollary \ref{cor:A} we know that $\A^{(\ell)}$ is t-exact for $-N \le \ell \le N$. So it remains to show that $D(\bY(\mu))$ is weakly generated by the images of $D(\bY(\eta))$ under the $L\gl_n$ action. 

Consider the map $\ch\colon \bY(\mu) = \bY(1^N) \rightarrow \bA^N$ which remembers the eigenvalues of $z$ acting on $L_i/L_{i-1}$ for $i=1,\dots,N$ (in this case all of these have rank one). The fibers of this map are iterated $\P^{m-1}$-bundles. For example, the fiber over $0 \in \bA^N$ is isomorphic to $Y(1^N)$ while the fiber over a general point (where all the coordinates are different) is just a product $(\P^{m-1})^N$. In particular, this means that $D(\bY(\mu))$ is weakly generated by line bundles of the form $\prod_i \sL_i^{a_i}$ where $\sL_i \coloneqq \det(\sV_i/\sV_{i-1})$ and $\sum_i a_i \equiv 0 \pmod N$.  

On the other hand, consider the endomorphism $\phi'_i \1_\mu = \T'_i \T_{i,1}' \1_\mu$ of $D(\bY(\mu))$ (see \cite{CK4} or the appendix for details). By \cite[Corollary 7.3]{CK4} it is given by tensoring with the line bundle $\sL_i^{-1} \otimes \sL_{i+1}$ (N.B. Remark \ref{rem:convention}). Moreover, a planar tree morphism induces a map $D(\bY(\eta)) \rightarrow D(\bY(\mu))$ which is just pullback via the natural projection $\bY(1^N) \rightarrow \bY(N)$. Thus if we apply this map to the line bundle $\det(\sV_1/\sV_0)$ on $\bY(N)$ we get the line bundle $\prod_i \sL_i$ on $\bY(1^N)$. If we then apply various $\phi'_i$ we can obtain all line bundles of the form $\prod_i \sL_i^{a_i}$ with $\sum_i a_i \equiv 0 \pmod N$. This shows weak generation of $D(\bY(\mu))$ and completes the proof. 
\end{proof}

\subsection{Induced t-structures on $D(Y(\uk))$}\label{sec:induced2}

We will now use Corollary \ref{cor:skewsym} to define a t-structure on the $D(Y(\uk))$. We take $\mu = (1^N)$ and consider the fiber product diagram
$$\xymatrix{
Y(\mu) \ar[r]^\iota \ar[d] & \bY(\mu) \ar[d]^{\ch} \\
0 \ar[r] & \bA^N }$$

Proposition 8.1 of \cite{CK4} implies that $\T'_i \circ \iota_* \cong \iota_* \circ \T'_i$ for all $i \in I$. Moreover, \cite[Corollary 7.3]{CK4} and Corollary \ref{cor:phi} tell us that $\phi'_i$ acting on $D(\bY(\mu))$ and $D(Y(\mu))$ is given by tensoring with the same line bundle (N.B. Remark \ref{rem:convention}). This means that $\phi'_i \circ \iota_* \cong \iota_* \circ \phi'_i$ and hence we also have $\T'_0 \circ \iota_* \cong \iota_* \circ \T'_0$. 

Now $\iota_*$ is conservative and its left adjoint is $\iota^*$. So it remains to check that $\iota_* \circ \iota^*\colon D(\bY(\mu)) \rightarrow D(\bY(\mu))$ is right t-exact. To see this note that $\iota_* \circ \iota^*$ is given by tensoring with $\iota_* \O_{Y(\mu)}$. Moreover, $Y(\mu) \subset \bY(\mu)$ is cut out by $x_1, \dots, x_N$ where $\bA^N = \Spec \C[x_1, \dots, x_N]$. So we have a Koszul resolution  
$$\bigotimes_i \left( \O_{\bY(\mu)} \{-2\} \xrightarrow{x_i} \O_{\bY(\mu)} \right)$$
of $\iota_* \O_{Y(\mu)}$. Since tensoring with $\O_{\bY(\mu)}$ is the identity (and hence t-exact) this shows that tensoring with $\iota_* \O_{Y(\mu)}$ is right t-exact.

The following is now a consequence of Corollary \ref{cor:skewsym}.

\begin{Theorem}\label{thm:skew-app}
Fix a perverse coherent t-structure on $\bY(\eta)$. Then there exists a unique t-structure on $\oplus_\uk D(Y(\uk))$ such that 
\begin{enumerate}
\item $\iota_*\colon D(Y(\mu)) \rightarrow D(\bY(\mu))$ is t-exact with respect to the t-structure on $D(\bY(\mu))$ induced as in Theorem \ref{thm:sym-app} and
\item the 1-morphism $D(Y(\uk)) \rightarrow D(Y(\mu))$ induced by a collection of planar trees is t-exact. 
\end{enumerate}
Moreover, this t-structure on $\oplus_\uk D(Y(\uk))$ is braid positive.
\end{Theorem}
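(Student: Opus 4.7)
The plan is to deduce the theorem as a direct application of Corollary~\ref{cor:skewsym}, taking $\tK(\uk) = D(\bY(\uk))$ (with the $(\hgl_n,\t)$-action of Theorem~\ref{thm:action1}), $\K(\uk) = D(Y(\uk))$ (with the action of Theorem~\ref{thm:action2}), and the connecting functor to be $\iota_*$ where $\iota\colon Y(\mu) \hookrightarrow \bY(\mu)$ is the closed embedding arising from the fiber product $Y(\mu) = \bY(\mu) \times_{\bA^N} \{0\}$. On the symmetric side, Theorem~\ref{thm:sym-app} supplies us with the required t-structure on $\bigoplus_{\mu' \in W_n\cdot\mu} D(\bY(\mu'))$ coming from the chosen perverse coherent t-structure on $D(\bY(\eta))$, so all that remains is to check the three hypotheses of Proposition~\ref{prop:skewsym}.

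First, $\iota_*$ is conservative since $\iota$ is a closed embedding, and its left adjoint is $\iota^*$. Second, I need to verify $\iota_* \circ \T'_i \cong \T'_i \circ \iota_*$ for all $i \in \hI$. For $i \in I$ this is Proposition~8.1 of~\cite{CK4}. For $i = 0$ one uses the defining formula $\T'_0 = \phi'_1 \cdots \phi'_{n-1} (\T'_1 \T'_2 \cdots \T'_{n-1} \cdots \T'_2 \T'_1)^{-1}$ together with the fact that, by Corollary~\ref{cor:phi} and \cite[Cor.~7.3]{CK4}, the functor $\phi'_i$ acts by tensoring with the \emph{same} line bundle $\det(\sV_i/\sV_{i-1})^{-1} \otimes \det(\sV_{i+1}/\sV_i)$ on both $D(\bY(\uk))$ and $D(Y(\uk))$. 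Since this line bundle is pulled back from $\bY(\uk)$, its tensor product commutes with $\iota_*$ by the projection formula, giving commutation of $\iota_*$ with each $\phi'_i$ and hence with $\T'_0$.

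The key technical step will be verifying that $\iota_* \circ \iota^*$ is right t-exact on $D(\bY(\mu))$. Here I use that $\iota_* \iota^*(\sF) \cong \sF \otimes \iota_* \O_{Y(\mu)}$, and that $Y(\mu) \subset \bY(\mu)$ is cut out as the zero locus of the $N$ regular functions $x_1, \dots, x_N$ (the pullbacks of the coordinates on $\bA^N$, each of $\C^\times$-weight $2$). This yields a Koszul resolution
\[
\iota_* \O_{Y(\mu)} \;\simeq\; \bigotimes_{i=1}^N \bigl[\, \O_{\bY(\mu)}\{-2\} \xrightarrow{\,x_i\,} \O_{\bY(\mu)} \,\bigr],
\]
a complex in non-positive cohomological degrees whose terms are direct sums of shifts of $\O_{\bY(\mu)}$. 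Since the perverse coherent t-structure on $D(\bY(\mu))$ is local (in the sense of \cite[Sec.~2.3]{Po}), tensoring with $\O_{\bY(\mu)}$ is t-exact, and hence tensoring with the whole complex is right t-exact. This is the main place where the specific geometric input — that $Y(\mu)$ is a global complete intersection inside $\bY(\mu)$ — enters.

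Having verified all hypotheses, Corollary~\ref{cor:skewsym} produces a unique t-structure on $\bigoplus_\uk D(Y(\uk))$ for which $\iota_*$ is t-exact and for which the planar-tree functors $D(Y(\uk)) \to D(Y(\mu))$ are t-exact, and this t-structure is automatically braid positive. The anticipated obstacle is purely the bookkeeping for the commutation $\iota_* \T'_0 \cong \T'_0 \iota_*$, since the definition of $\T'_0$ involves a long composition; once the $\phi'_i$-commutation is isolated, the rest is formal.
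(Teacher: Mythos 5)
Your proof is correct and follows essentially the same route as the paper: reduce to Corollary~\ref{cor:skewsym}, check commutation of $\iota_*$ with $\T'_i$ for $i \in I$ via \cite[Proposition~8.1]{CK4} and for $i=0$ via the matching line bundles for $\phi'_i$, and verify right t-exactness of $\iota_*\iota^*$ via the Koszul resolution of $\iota_*\O_{Y(\mu)}$. One small wording point: the t-structure on $D(\bY(\mu))$ is not itself a perverse coherent t-structure (it is the one induced from $D(\bY(\eta))$ via Theorem~\ref{thm:sym-app}), and the appeal to locality is unnecessary since tensoring with $\O_{\bY(\mu)}$ is the identity and $\{-2\}$ is t-exact by construction.
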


Just like we have a natural projection map 
$$\tpi\colon \bY(\mu) \rightarrow \bY(\eta) = \{\C[z]^m = L_0 \subseteq L_n \subseteq \C(z)^m : z L_n \subset L_n, \dim(L_n/L_0) = N \}$$ 
which forgets $L_1,\dots,L_{n-1}$ we also have a natural map $\pi\colon Y(\mu) \rightarrow \oY(\mu)$ where 
$$\oY(\mu) = \{ \C[z]^m = L_0 \subseteq L_n \subseteq \C(z)^m : z^n L_n \subset L_0, \dim(L_n/L_0) = N \}.$$
This gives us a commutative diagram 
$$\xymatrix{
Y(\mu) \ar[r]^\iota \ar[d]^{\pi} & \bY(\mu) \ar[d]^{\tpi} \\
\oY(\mu) \ar[r]^i & \bY(\eta) }$$
where $i\colon \oY(\mu) \rightarrow \bY(\eta)$ is the natural inclusion. 
\begin{Remark}
In the above we really want $\oY(\mu)$ and not $Y(\eta)$ or $\oY(\eta)$ which are in general smaller (sometimes empty) varieties. Using the notation from Section \ref{sec:conv-vars}, the varieties $Y(\mu)$ and $\oY(\mu)$ are identified with $\Gr^{(\omega_1, \dots, \omega_1)}$ and $\Gr^{N \omega_1}$ respectively while the map $\pi$ corresponds to $m$. 
\end{Remark}

\begin{Corollary}\label{cor:skew-app}
The functor $\pi\colon D(Y(\mu)) \rightarrow D(\oY(\mu))$ is t-exact with respect to the t-structure on $D(Y(\mu))$ from Theorem \ref{thm:skew-app} and the $i$-induced perverse coherent t-structure on $D(\oY(\mu))$. 
\end{Corollary}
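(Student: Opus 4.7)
The plan is to transport the question to the ambient varieties $\bY(\mu)$ and $\bY(\eta)$ via the commutative square relating $\pi$ and $\tpi$, exploiting that the closed embeddings $\iota$ and $i$ are t-exact.

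First, by Lemma~\ref{lem:pcoh} the pushforward $i_*$ is t-exact between the $i$-induced perverse coherent t-structure on $D(\oY(\mu))$ and the perverse coherent t-structure on $D(\bY(\eta))$, and it is conservative since $i$ is a closed embedding. Hence by Lemma~\ref{lem:t-structure:conservative}, $\pi_*$ is t-exact as soon as $i_* \pi_* \cong \tpi_* \iota_*$ is t-exact. Since $\iota_*$ is t-exact by clause~(1) of Theorem~\ref{thm:skew-app}, the problem reduces to showing that $\tpi_* \colon D(\bY(\mu)) \to D(\bY(\eta))$ is t-exact, with $D(\bY(\mu))$ carrying the t-structure from Theorem~\ref{thm:sym-app}.

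For this last step, I would use the identification, noted in the proof of Theorem~\ref{thm:sym-app}, that $\tpi^*$ coincides up to a grading shift with the planar tree morphism $\F_1^{(1)} \F_2^{(2)} \dotsb \F_{n-1}^{(N-1)}$ from $D(\bY(\eta))$ to $D(\bY(\mu))$. By Theorem~\ref{thm:sym-app} each $\F_i$ is t-exact, and hence so is each divided power $\F_i^{(p)}$ (as a direct summand of $\F_i^p$, cf.~Lemma~\ref{lem:t-structure_and_direct_sums}), so that $\tpi^*$ is t-exact. Passing to right adjoints through the biadjunction between $\F_i$ and $\E_i$ (valid up to grading shifts, which are themselves t-exact on both t-structures), $\tpi_*$ is then, up to a grading shift, the composition $\E_{n-1}^{(N-1)} \dotsb \E_2^{(2)} \E_1^{(1)}$, which is t-exact by the same reasoning applied to the $\E_i$. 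This yields the required t-exactness of $\tpi_*$.

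The main non-formal ingredient is the identification of $\tpi^*$ with the planar tree morphism up to a grading shift; this is geometric in nature, amounting to computing the iterated convolution of the correspondence kernels $\O_{\bY_i^1(\uk)}$ along the chain from $\eta$ down to $\mu$, and was already used in the proof of Theorem~\ref{thm:sym-app} (where it is checked, for example, on the line bundle $\det(\sV_1/\sV_0)$). Once this is in hand, the corollary follows purely formally from Lemmas~\ref{lem:pcoh} and~\ref{lem:t-structure:conservative} combined with the t-exactness of $\E_i$ and $\F_i$ from Theorem~\ref{thm:sym-app}.
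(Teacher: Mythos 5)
Your proof is correct and follows essentially the same route as the paper's: both reduce the problem to $i_*\pi_* \cong \tpi_*\iota_*$ via $i_*$ being t-exact (Lemma~\ref{lem:pcoh}) and conservative, and both identify $\tpi_*$ (up to a $\{\cdot\}$-shift, hence harmlessly) with the planar-tree composition of $\E_i^{(p)}$'s so that Theorem~\ref{thm:sym-app} gives its t-exactness. The only cosmetic difference is that you pass through $\tpi^*$ as a tree of $\F_i^{(p)}$'s and take right adjoints rather than citing $\tpi_*$ as a tree of $\E_i$'s directly (and your string $\F_1^{(1)}\dotsb\F_{n-1}^{(N-1)}$ should read $\F_{n-N}^{(1)}\dotsb\F_{n-1}^{(N-1)}$ when $n>N$), but this does not affect the argument.
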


\begin{proof}
We know that $\iota_*$ is t-exact and that $\tpi_*$ is t-exact since it is isomorphic (up to some grading shift $\{\cdot\}$) to a composition of $\E_i$s for various $i \in I$ (a planar tree). This means that the composition $\tpi_* \circ \iota_* \cong i_* \circ \pi_*$ is t-exact. But $i_*$ is t-exact by Lemma \ref{lem:pcoh} which then implies that $\pi_*$ must also be t-exact. 
\end{proof}

\section{Some final remarks}\label{sec:finalrem}

Recall that in \cite{BM} one considers two types of non-standard t-structures, called the \enquote{exotic} and \enquote{perversely exotic} t-structures.

The exotic t-structures are defined on the \emph{non-equivariant} derived categories $D(\tsN)$ and $D(\tg)$ of coherent sheaves on the Springer resolution and Grothendieck--Springer resolution respectively.
They are axiomatically defined by the requirement that they are braid positive and that the pushforward along the projection map $\pi\colon \tsN \to \sN$ (resp.~$\tpi\colon \tg \to \g$) is t-exact with respect to the standard t-structure on the base.

On the other hand, the perversely exotic t-structure is defined on the derived category $D^G(\tsN)$ of \emph{equivariant} coherent sheaves on the Springer resolution.  Due to the fact that $D^G(\tsN)$ is not finitely generated, its axiomatic description will need an additional condition, which we will describe next. 
 
One says that a t-structure on a triangulated category $\sC$ is compatible with a thick triangulated subcategory $\sC'$ if there exist t-structures on $\sC'$ and $\sC/\sC'$ such that the inclusion and projection maps are t-exact.  One extends this definition in the obvious way to define compatibility of a t-structure with a filtration by thick triangulated subcategories.  The support filtration on $D^G(\tsN)$ is then the filtration given by the full subcategories consisting of complexes supported on the preimage of each $G$-orbit closure in $\sN$. 
 
The perversely exotic t-structure is the (unique) t-structure on $\tsN$, which is
\begin{itemize}
    \item compatible with the support filtration,
    \item braid positive, and
    \item such that the functor $\pi_*\colon D^G(\tsN) \to D^G(\sN)$ is t-exact with respect to the perverse coherent t-structure of middle perversity on the base.
\end{itemize}
We will now discuss how these t-structures arise from our considerations.

\subsection{Grothendieck--Springer resolution}\label{sec:springer1}

We first focus on the 2-category $\tK^{n,n}_{\Gr,n}$ (i.e. the special case $m=n=N$) where we take $H \subseteq GL_n \subset GL_n[[z]]$. 

In this case $\bY(\eta) \cong \bY(n)$. A special case of Theorem 3.1 in \cite{CK4} shows that there is an open subset $\bY(n)^o \subset \bY(n)$ which is isomorphic to $\g \coloneqq \gl_n$ (the space of $n \times n$ matrices). This isomorphism, which is $GL_n$-equivariant but not $GL_n[[z]]$-equivariant (this is why we restricted $H$ to $GL_n$), is obtained in two steps. First one identifies $\bY(n)$ with 
$$\bX(n) = \{\C[z]^n = \C[z] \otimes_\C V = L_0 \supset L: zL \subset L, \dim(L_0/L)=n \}$$
where $V = \spn \{e_1, \dots, e_n\}$. Then one considers the open locus $\bX(n)^o$ consisting of $L$ such that the natural projection $L_0/L \rightarrow V$ is an isomorphism (this projection is induced by $\C[z] \rightarrow \C$ where $z \mapsto 0$). Equivalently this means that $\{e_1, \dots, e_n\}$ forms a basis of $L_0/L$. This construction is an analogue of the Mirkovi\'c--Vybornov isomorphism \cite{MVy}.

More generally, one can identify $\bY(\uk)$ with 
$$\bX(\uk) = \{\C[z]^n = L_0 \supset L_1 \supset \dots \supset L_n, zL_i \subset L_i, \dim(L_{i-1}/L_i) = k_i \}$$ 
and then consider the open subset $\bX(\uk)^o \subset \bX(\uk)$ where $L_0/L_n \xrightarrow{\sim} V$ (notice that we are assuming $\sum_i k_i = n$). Translating the $L_i$ under this isomorphism allows us to identify $\bX(\uk)^o$ with 
$$\{(X,V_\bullet): X \in \gl_m, 0=V_0 \subset V_1 \subset \dots \subset V_n = \C^n, XV_i \subset V_i, \dim(V_{i-1}/V_i)=k_i\}.$$
Let us denote this space $\tg_\uk$ (in the notation of \cite{BM} it is denoted $\tg_\sP$). The space $\tg_{(1^n)}$ will be denoted just $\tg$. The natural map $\tpi\colon \tg \rightarrow \g$ which forgets $V_1,\dots,V_{n-1}$ is just the Grothendieck--Springer resolution. 

We can form a 2-category $\tK_\g$ out of $\oplus_\uk D(\tg_\uk)$. It is easy to see that the $(L\gl_n,\t)$ action on $\tK^{n,n}_{\Gr,n}$ restricts to give an action on $\tK_\g$. 

\begin{Lemma}\label{lem:braidagrees1}
The induced affine braid group action on $D(\tg)$ generated by $\T'_i, \phi'_i$ for $i \in I$ agrees with the action from \cite[Theorem 1.6.1]{BR}. 
\end{Lemma}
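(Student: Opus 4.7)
The strategy is to match generators and then invoke that both constructions give honest affine braid group actions. Recall that \cite[Theorem~1.6.1]{BR} gives an affine braid group action on $D(\tg)$ whose relevant generators are: (i) for each simple reflection $s_i$ ($i \in I$), a functor $T_i^{\text{BR}}$ given by convolution with the structure sheaf (suitably twisted) of the Steinberg-type subscheme $Z_i^{\text{BR}} \subset \tg \times_\g \tg$ consisting of triples $(X, V_\bullet, V'_\bullet)$ with $V_j = V'_j$ for $j \ne i$; and (ii) for each character $\lambda$ of the diagonal torus, tensor product with the associated tautological line bundle $\O_{\tg}(\lambda)$.

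First I would identify the kernel of $\T'_i$ on $\tg \cong \bY(n)^o$ (or more precisely on $\bY(1^n)^o$ for the flag version). The $\bY$-analogue of Lemma~\ref{lem:Ts}, proved as \cite[Proposition~7.1]{CK4}, describes the kernel of $\T'_i$ on $\bY(1^n) \times \bY(1^n)$ as the structure sheaf of the subscheme $\widetilde{Z}_i = \{(L_\bullet, L'_\bullet) : L_j = L'_j \text{ for } j \ne i\}$, with an explicit grading shift. Under the Mirković--Vybornov-type isomorphism $\bY(1^n)^o \xrightarrow{\sim} \tg$ recalled above, the restriction of $\widetilde{Z}_i$ to $(\bY(1^n)^o)^2$ becomes exactly $Z_i^{\text{BR}}$. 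Hence the restriction of $\T'_i$ to $D(\tg)$ is convolution with $\O_{Z_i^{\text{BR}}}$ (up to the appropriate equivariant shift), which matches $T_i^{\text{BR}}$.

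Next I would identify $\phi'_i$. By the $\bY$-analogue of Corollary~\ref{cor:phi}, namely \cite[Corollary~7.3]{CK4}, the functor $\phi'_i$ acts by tensor product with the line bundle $\det(\sV_i/\sV_{i-1})^{-1} \otimes \det(\sV_{i+1}/\sV_i)$. Under the MV-type isomorphism, the line subquotients $\sV_j/\sV_{j-1}$ on $\bY(1^n)^o$ correspond to the tautological line subquotients $V_j/V_{j-1}$ of the complete flag on $\tg$, i.e.\ to the line bundles attached to the characters $\varepsilon_j$ of the diagonal torus. Hence $\phi'_i$ acts as tensor product with $\O_{\tg}(-\varepsilon_i + \varepsilon_{i+1}) = \O_{\tg}(\alpha_i)$, which is precisely the BR translation generator associated to the simple root $\alpha_i$.

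Since both actions factor through the affine braid group and agree on the generating set $\{\T'_i, \phi'_i\}_{i \in I}$, which freely generates a common subgroup of the affine braid group subject only to the braid relations and Bernstein-type relations satisfied by both sides, the two actions coincide on this subgroup. The main technical point is ensuring that the normalization conventions (equivariant shifts, line bundle twists, and the sign conventions following Remark~\ref{rem:convention}) in \cite{BR} and \cite{CK4} line up, and that the open embedding $\tg \hookrightarrow \bY(1^n)$ pulls back the BR kernels to our kernels on the nose. Both issues reduce to bookkeeping once the MV-type identifications of the tautological bundles are written out explicitly.
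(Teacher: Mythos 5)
Your proposal is correct and follows essentially the same route as the paper: the paper's proof is exactly the one-liner that $\T'_i$ agrees with the BR generator by \cite[Proposition~7.1]{CK4} and $\phi'_i$ agrees by \cite[Corollary~7.3]{CK4}, with a pointer to Remark~\ref{rem:convention} for the sign/inverse conventions. What you add — unwinding the kernels as $\O_{\widetilde{Z}_i}$ on $\bY(1^n)\times\bY(1^n)$, restricting along the Mirkovi\'c--Vybornov-type open embedding to get $\O_{Z_i^{\mathrm{BR}}}$ on $\tg\times_\g\tg$, and tracking the tautological line subquotients $\sV_j/\sV_{j-1}$ to identify $\phi'_i$ with tensoring by $\O_{\tg}(\alpha_i)$ — is precisely the bookkeeping the paper is implicitly delegating to those citations, so your writeup is a valid and somewhat fuller account of the same argument.

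One small caveat: your final paragraph asserting that the generators freely generate a common subgroup subject only to braid and Bernstein relations is unnecessary and slightly overstated — once you have identified the generating endofunctors (as isomorphic kernels), the two actions of the group generated by $\{\T'_i,\phi'_i\}_{i\in I}$ coincide by definition; no appeal to a presentation of the affine braid group is needed, and in fact neither the paper nor \cite{BR} needs such a presentation at this point. Also note that the paper's Remark after the Lemma flags that \cite{BR} treats the \emph{extended} affine braid group, so strictly speaking you are matching your action with the restriction of theirs to the non-extended subgroup; this is worth stating explicitly rather than folding into the claim about ``a common subgroup.''
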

\begin{Remark}
\cite{BR} actually consider an action of the \emph{extended} affine braid group, while here we only consider the non-extended version. This does not affect the braid positivity condition.
\end{Remark}
\begin{proof}
The generators $\T'_i$ agree by \cite[Proposition 7.1]{CK4} while the generators $\phi'_i$ agree by \cite[Corollary 7.3]{CK4} (N.B. Remark \ref{rem:convention}).  
\end{proof}

\begin{Corollary}\label{cor:tg-app}
Fix a perverse coherent t-structure on $D(\g) = D(\tg_\eta)$. Then there exists a unique t-structure on $\oplus_\uk D(\tg_\uk)$ which agrees with the chosen t-structure on $D(\g)$ and such that $\E_i,\F_i$ are t-exact for all $i \in \hI$. Moreover, this t-structure is braid positive. Using the standard t-structure on $D(\g)$ (with $H$ trivial) recovers the exotic t-structure on $D(\tg)$ from \cite[Theorem 1.5.1]{BM}. 
\end{Corollary}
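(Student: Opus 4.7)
The plan is to apply Theorem \ref{thm:sym:main} directly to the 2-category $\tK_\g$. The $(\hgl_n,\t)$-action on $\tK^{n,n}_{\Gr,n}$ from Theorem \ref{thm:action1} restricts to one on $\tK_\g$, since the correspondences $\bY_i^r(\uk)$ restrict cleanly to the open subvarieties $\tg_\uk = \bY(\uk)^o$ under the Mirkovi\'c--Vybornov-type identification from Section \ref{sec:springer1}. We use the identifications $\tK_\g(\eta) \cong D(\g)$ and $\tK_\g(\mu) \cong D(\tg)$, with the chosen perverse coherent t-structure on $D(\g)$ serving as the initial data at the highest weight.

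The first task is to verify the two hypotheses of Theorem \ref{thm:sym:main}. For weak generation of $D(\tg)$ by the image of $D(\g)$ under the $\hgl_n$-action, I would mimic the argument in Theorem \ref{thm:sym-app}: the characteristic polynomial map $\tg \to \bA^n$ exhibits $D(\tg)$ as weakly generated by line bundles of the form $\bigotimes_i \det(V_i/V_{i-1})^{a_i}$ with $\sum_i a_i \equiv 0 \pmod n$, and each of these is produced by applying the planar tree morphism $\tpi^*\colon D(\g) \to D(\tg)$ to $\det(\sV_n)$ and then twisting by suitable compositions of the endomorphisms $\phi'_i$ (which by Corollary \ref{cor:phi}, or rather its analogue on the symmetric side from \cite[Corollary 7.3]{CK4}, are given by tensoring with the appropriate line bundles). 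For t-exactness of $\A^{(\ell)}$ on $D(\g)$, I would appeal to Corollary \ref{cor:A}: after restricting from $\bY(\eta)$ to the open subset $\bY(\eta)^o \cong \g$, the functor $\A^{(\ell)}$ is tensoring with $\Lambda^{|\ell|}(\sV_n)^{\pm 1}$, up to the internal shift $\{\cdot\}$. Since the perverse coherent t-structure is local (Section \ref{sec:pcoh}), tensoring with a vector bundle is t-exact, and so is the grading shift. Theorem \ref{thm:sym:main} then immediately produces the desired unique braid-positive t-structure on $\oplus_\uk D(\tg_\uk)$.

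The second task is to identify our t-structure on $D(\tg)$, when the standard t-structure on $D(\g)$ is taken as initial data, with the RT exotic t-structure of \cite[Theorem~1.5.1]{BM}. By Lemma \ref{lem:braidagrees1} our affine braid group action coincides with the one used in \cite{BM}. The first part of the corollary gives braid positivity, while the inductive construction in Theorem \ref{thm:sym:main} makes the planar tree morphism $\tpi^*$ (and hence by adjunction $\tpi_*$, up to a shift) t-exact for the standard t-structure on $D(\g)$. These two properties — braid positivity plus compatibility with the standard t-structure on $\g$ via $\tpi$ — are precisely what characterize BM's exotic t-structure, so the two t-structures coincide.

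The main obstacle is this last matching step. One must carefully align BM's axiomatic characterization with ours, in particular keeping track of the fact that \cite{BM} use the \emph{extended} affine braid group while our Theorem \ref{thm:sym:main} delivers braid positivity for the non-extended version. This discrepancy is harmless because the extra generators act by tensoring with line bundles, which are t-exact for any perverse coherent (hence local) t-structure; but this needs to be checked explicitly.
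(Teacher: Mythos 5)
Your proof follows essentially the same approach as the paper. The paper's own proof is terse — it says the existence/uniqueness/braid positivity follows ``exactly like in the proof of Theorem~\ref{thm:sym-app}'', and then identifies the resulting t-structure with BM's by observing that $\pi_*\colon D(\tg)\to D(\g)$ is (up to internal shift) a planar-tree 1-morphism from $\mu$ to $\eta$, hence t-exact by the first part, which together with braid positivity pins down BM's exotic t-structure. You have unpacked the application of Theorem~\ref{thm:sym:main} (checking weak generation of $D(\tg)$ and t-exactness of $\A^{(\ell)}$ by restriction of Corollary~\ref{cor:A} to the open locus $\bY(\eta)^o\cong\g$) and flagged the extended vs.\ non-extended affine braid group point, which the paper addresses in the remark after Lemma~\ref{lem:braidagrees1}; these are elaborations of what the paper leaves implicit, not a different argument.
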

\begin{proof}
The first part follows exactly as in the proof of Theorem \ref{thm:sym-app}. To see that we recover the t-structure from \cite[Theorem~1.5.1]{BM} it suffices to show that $\pi_*\colon D(\tg) \rightarrow D(\g)$ is t-exact where $\pi\colon \tg \rightarrow \g$ is the standard projection. But $\pi_*$ is, up to a grading shift $\{\cdot\}$, isomorphic to the map corresponding to a planar tree between $\mu = (1^n)$ and $\eta$. Thus it is t-exact by the first part. 
\end{proof}

\subsection{Springer resolution}\label{sec:springer2}

Section \ref{sec:springer1} above was on the symmetric side of the picture. One has a similar story on the skew side. More precisely, we focus on the 2-category $\K^{n,n}_{\Gr,n}$ where again we take $H \subseteq GL_n \subseteq GL_n[[z]]$. 

Following \cite{MVy} each $Y(\uk)$ in this case has an open subset $\g_\uk$ isomorphic to the cotangent bundle to the partial flag variety
$$\{(X,V_\bullet): X \in \gl_m, 0=V_0 \subset V_1 \subset \dots \subset V_n = \C^n, XV_i \subset V_{i-1}, \dim(V_{i-1}/V_i)=k_i\}.$$
The space $\g_{(1^n)}$ will be denoted $\tsN$ and the natural map $\pi\colon \tsN \rightarrow \sN$ which forgets $V_1, \dots, V_{n-1}$ is just the Springer resolution of the nilpotent cone $\sN$. 

We can again form a 2-category $\K_\g$ out of $\oplus_\uk D(\g_{\uk})$ and the $(L\gl_n,\t)$ action on $\K^{n,n}_{\Gr,n}$ restricts to give an action on $\K_\g$. 

\begin{Lemma}\label{lem:braidagrees2}
The induced affine braid group action on $D(\tsN)$ generated by $\T'_i, \phi'_i$ for $i \in I$ agrees with the action from \cite[Theorem 1.6.1]{BR}.
\end{Lemma}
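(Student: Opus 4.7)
The plan is to mirror the proof of Lemma~\ref{lem:braidagrees1} (which was the analogous statement on the Grothendieck--Springer/symmetric side), now using the explicit kernel computations that were carried out for the skew side earlier in the paper. The key observation is that both $\T'_i$ and $\phi'_i$ have already been pinned down as specific kernels, so the remaining work is to restrict along the Mirković--Vybornov embedding $\g_\uk \hookrightarrow Y(\uk)$ and compare with \cite{BR}.

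First I would handle the generators $\T'_i$ for $i \in I$. By Lemma~\ref{lem:Ts}, the kernel inducing $\T'_i$ on $D(Y(1^n))$ is $\O_{Z_i(1^N,1^N)}\{1\}$, where $Z_i(1^N,1^N)$ is the reducible correspondence consisting of the diagonal component together with the component $W_i$ where $zL_{i+1} \subset L_{i-1}$. Pulling back along the open embedding $\tsN \hookrightarrow Y(1^n)$, the component $W_i$ becomes the standard Steinberg-type correspondence on $\tsN \times \tsN$ consisting of pairs of flags differing only in the $i$th step, and $Z_i$ restricts to the scheme-theoretic union used in \cite[Theorem~1.6.1]{BR} to define their braid generator. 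One needs to verify that the grading shift $\{1\}$ (which on the skew side corresponds to the correct cohomological placement since $\la 1\ra = [1]\{-1\}$) matches the shift in \cite{BR}; this is a bookkeeping exercise once the Koszul resolution of the diagonal is accounted for, exactly as in the proof of Lemma~\ref{lem:Ts}.

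Next I would handle the generators $\phi'_i$. Corollary~\ref{cor:phi} identifies $\phi'_i \1_\uk$ as tensoring with $\det(\sV_i/\sV_{i-1})^{-1} \otimes \det(\sV_{i+1}/\sV_i)$. Under the Mirković--Vybornov isomorphism, the tautological bundles $\sV_j/\sV_{j-1}$ on $Y(\uk)$ correspond to the subquotients $V_j/V_{j-1}$ of the tautological partial flag on $\g_\uk$. Bezrukavnikov--Riche's lattice generators are given by tensoring with precisely the same line bundles (up to the same standard convention for inverses; c.f.\ Remark~\ref{rem:convention}), so these agree on the nose.

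Finally, since the braid relations are satisfied by both actions (for ours by \cite[Corollary~7.3]{C2} together with Section~\ref{sec:braidaction}, and for theirs by \cite[Theorem~1.6.1]{BR}), and the generators $\T'_i$ and $\phi'_i$ for $i \in I$ coincide, the two finite affine braid group actions agree. The main obstacle I anticipate is purely notational: matching shift/sign conventions between \cite{BR} and our setup (recall our $\T'_i$ corresponds to the inverse of the one in \cite{CK2,CK4}, per Remark~\ref{rem:convention}), but no genuinely new geometric input is required beyond what is already available in Lemma~\ref{lem:Ts} and Corollary~\ref{cor:phi}.
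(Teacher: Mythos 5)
Your proof is correct and corresponds to the first of the two routes the paper's proof briefly indicates: ``comparing kernels as in the proof of Lemma~\ref{lem:braidagrees1}.'' You flesh this out using Lemma~\ref{lem:Ts} (which identifies $\O_{Z_i(1^N,1^N)}\{1\}$ as the kernel for $\T'_i$ on $D(Y(1^N))$) and Corollary~\ref{cor:phi} (which identifies $\phi'_i$ as tensoring by $\det(\sV_i/\sV_{i-1})^{-1}\otimes\det(\sV_{i+1}/\sV_i)$), then restrict along the Mirkovi\'c--Vybornov open subset. That is exactly the skew-side analogue of what is done for Lemma~\ref{lem:braidagrees1}.

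The paper also offers a second, shorter route that you do not mention: observe that the braid group actions on $D(\tg)$ and on $D(\tsN)$ are compatible via the closed inclusion $\iota\colon \tsN\hookrightarrow\tg$ (as is shown in \cite{BR}), so Lemma~\ref{lem:braidagrees2} follows formally from Lemma~\ref{lem:braidagrees1} plus conservativity of $\iota_*$, with no need to re-verify kernel identifications on the skew side. This second route avoids redoing the shift bookkeeping you flag as the main obstacle, so it is worth recording as an alternative; but your kernel-comparison argument is self-contained and equally valid.
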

\begin{proof}
This can be seen either by comparing kernels as in the proof of Lemma \ref{lem:braidagrees1} or by using the result in Lemma \ref{lem:braidagrees1} and the fact that the braid group actions on $D(\tg)$ and $D(\tsN)$ are compatible via the natural inclusion $\iota\colon \tsN \rightarrow \tg$. 
\end{proof}

\begin{Corollary}\label{cor:g-app}
Fix a perverse coherent t-structure on $D(\g) = D(\tg_\eta)$. Then there exists a unique t-structure on $\oplus_\uk D(\g_\uk)$ such that
\begin{enumerate}
\item $\iota_*\colon D(\tsN) \rightarrow D(\tg)$ is t-exact with respect to the t-structure on $D(\tg)$ induced as in Corollary \ref{cor:tg-app} and 
\item the 1-morphism $D(\g_\uk) \rightarrow D(\tsN)$ induced by a collection of planar trees is t-exact.
\end{enumerate}
Moreover, this t-structure is braid positive on $\oplus_\uk D(\g_\uk)$. 

If we take the standard perversity function $p = 0$ on $\g$ and $H$ trivial then we recover the ``representation theoretic exotic t-structure'' on $D(\tsN)$ from \cite[Theorem 1.5.1]{BM}. 

If we take a middle perversity function on $\g$ and $H=G$ then we recover the perversely exotic t-structure on $D^G(\tsN)$ from \cite[Theorem 6.2.1]{BM}. 
\end{Corollary}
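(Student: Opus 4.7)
The plan is to reduce to Corollary~\ref{cor:skewsym}, after which the identification with the BM t-structures becomes a characterization check.

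First, I would establish existence, uniqueness and braid positivity of the t-structure on $\oplus_\uk D(\g_\uk)$ by applying Corollary~\ref{cor:skewsym} to $\K_\g$, $\tK_\g$ and the closed immersion $\iota\colon \g_\mu \hookrightarrow \tg_\mu$, following the argument of Theorem~\ref{thm:skew-app} verbatim. Under the Mirkovi\'c--Vybornov identifications, $\g_\mu$ is the zero fiber of the deformation of $\tg_\mu$ over $\bA^n$ cut out by the $n$ coordinates $x_1,\dotsc,x_n$ of $\C^\times$-weight $2$, so $\iota_*\O_{\g_\mu}$ admits a Koszul resolution
\[
\iota_*\O_{\g_\mu} \simeq \bigotimes_{i=1}^{n}\bigl(\O_{\tg_\mu}\{-2\}\xrightarrow{x_i}\O_{\tg_\mu}\bigr),
\]
which makes $\iota_*\circ\iota^*$ right t-exact for the t-structure on $D(\tg_\mu)$ coming from Corollary~\ref{cor:tg-app}. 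The commutations $\iota_*\circ\T'_i \cong \T'_i\circ\iota_*$ for $i\in I$ are \cite[Proposition 8.1]{CK4}; for $i=0$ they reduce via the definition of $\T'_0$ to a comparison of the functors $\phi'_i$, which by Corollary~\ref{cor:phi} and \cite[Corollary 7.3]{CK4} act on both sides as tensoring with the same line bundle in the $\sV_i$'s. Since $\iota_*$ is conservative, all hypotheses of Corollary~\ref{cor:skewsym} are verified.

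Second, to identify the resulting t-structure with the two BM t-structures, I would exploit the commutative square
\[
\xymatrix{
D(\tsN) \ar[r]^{\iota_*} \ar[d]_{\pi_*} & D(\tg) \ar[d]^{\tpi_*} \\
D(\sN) \ar[r]^{i_*} & D(\g)
}
\]
to show that $\pi_*$ is t-exact when $D(\sN)$ carries the $i$-induced perverse coherent t-structure. By construction $\iota_*$ is t-exact, $\tpi_*$ is t-exact because it is a planar tree 1-morphism in $\tK_\g$ (up to a $\{\cdot\}$ shift, using Corollary~\ref{cor:tg-app}), and $i_*$ is t-exact and conservative by Lemma~\ref{lem:pcoh}; Lemma~\ref{lem:t-structure:conservative} then forces $\pi_*$ to be t-exact. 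Combined with the affine braid positivity already established and the matching of braid generators in Lemma~\ref{lem:braidagrees2}, this shows that for the standard (resp.\ middle) perversity on $\g$ our t-structure on $D(\tsN)$ satisfies the characterizing properties of \cite[Theorem 4.2]{BM} (resp.\ \cite[Theorem 6.2.1]{BM}).

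The delicate step will be the final characterization: one must invoke, or extract from \cite{BM}, a characterization of the RT and perversely exotic t-structures on $D(\tsN)$ purely in terms of affine braid positivity together with t-exactness of $\pi_*$ relative to the chosen t-structure on $D(\sN)$. Both BM t-structures are originally constructed via tilting generators, and the RT case moreover passes through positive characteristic; the main obstacle is thus packaging their uniqueness in the form we need, while everything else is a formal consequence of the abstract machinery developed earlier in the paper.
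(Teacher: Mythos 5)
Your proposal is correct and follows essentially the same route as the paper: the first part repeats the argument of Theorem~\ref{thm:skew-app} applied to $\K_\g$ (your commutative-square argument is precisely the paper's Corollary~\ref{cor:skew-app}, which it cites directly), and the identification with the BM t-structures comes from braid positivity together with t-exactness of $\pi_*$. Your flagged concern at the end is legitimate but not a gap: the paper, like you, relies on the fact that the BM/BR t-structures are \emph{uniquely characterized} by affine braid positivity together with $\pi_*$ being t-exact for the chosen perversity on $\sN$ (this uniqueness is part of \cite[Theorems 1.5.1 and 6.2.1]{BM} and \cite[Theorem 1.8.2]{BR}, once the braid actions are matched via Lemma~\ref{lem:braidagrees2}), so the characterization is available and the argument closes.
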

\begin{proof}
The first part follows exactly like in the proof of Theorem \ref{thm:skew-app}. When $p=0$ we recover the t-structure from \cite[Theorem 4.2]{BM} because our t-structure is braid positive and $\pi_*\colon D(\tsN) \rightarrow D(\sN)$ is t-exact by Corollary~\ref{cor:skew-app}. 

If we choose $H=G$ and a middle perversity function on $\g$ then the induced perversity function on $\sN \subset \g$ is also the middle perversity function. It follows again by Corollary~\ref{cor:skew-app} that $\pi_*\colon D^G(\tsN) \rightarrow D^G(\sN)$ is t-exact with respect to these t-structures. 

Finally, to check compatibility with the support filtration, fix an orbit closure $O \subseteq \sN \subset \g$. We can repeat our construction with the categories $D^G_{\tpi^{-1}O}(\tg_\uk)$ of complexes supported in the preimage of $O$ to obtain braid positive t-structures on these categories, such that the inclusion functors into $D^G(\tg_\uk)$ are t-exact.
Correspondingly we also get a t-structure on $D^G_{\pi^{-1}O}(\tsN)$ such that the inclusion functor is t-exact.
More generally, if $O_1 \subseteq O_2$ is an inclusion of orbit closures, then we get a compatible t-structure on $D^G_{\pi^{-1}O_2}(\tsN) / D^G_{\pi^{-1}O_1}(\tsN) \cong D^G_{\pi^{-1}(O_2\setminus O_1)}(\tsN \setminus \pi^{-1}O_1)$. Thus our t-structure on $D^G(\tsN)$ is compatible with the support filtration.  
\end{proof}

\appendix
\section{From the Drinfeld--Jimbo to the Kac--Moody presentation}\label{sec:appdrinfeld}

The concept of an $(L\gl_n,\t)$ action discussed in Section \ref{sec:action} involves the quantum affine algebra in its Kac--Moody presentation. On the other hand, in \cite{CK4} we constructed an action of $(L\gl_n,\t)$ in its Drinfeld--Jimbo presentation on $\oplus_\uk D(\bY(\uk))$. In this section we explain how these two are related. Note that we do not prove that an $(L\gl_n,\t)_{KM}$ action is equivalent to an $(L\gl_n,\t)_{DJ}$ action although something like this is almost certainly true. 

Briefly recall that an $(L\gl_n,\t)_{DJ}$ action uses generating 1-morphisms $\E_{i,1}$ and $\F_{i,-1}$ instead of $\E_0$ and $\F_0$ (for a complete definition see \cite[Section 4]{CK4}). 

Starting with an $(L\gl_n, \t)_{DJ}$ action we can define an affine braid group action generated by $\T_i$ and $\phi_i$ for $i=1,\dots,n-1$ (see \cite[Section 4.2]{CK4}). More precisely, $\T_i$ is defined as in Section \ref{sec:braidaction} using the $\sl_2$ pair $\E_i,\F_i$ while $\T_{i,1}$ is defined in the same way using the $\sl_2$ pair $\E_{i,1}, \F_{i,-1}$. Then $\phi_i$ is defined as $\T_i \T_{i,1}$.  

\begin{Remark}
Because of conventions (see Remark \ref{rem:convention}) $\phi_i$ is defined in \cite{CK4} as $\T_{i,1} \T_i$ rather than $\T_i \T_{i,1}$. For this same reason, \cite[Lemma 4.8]{CK4} which we will use, also takes a slightly different form as we now explain. For $\alpha = \sum_{i=1}^{n-1} a_i \alpha_i$ we write $\phi_\alpha \coloneqq \prod_i \phi_i^{a_i}$. Then \cite[Lemma 4.8]{CK4} implies that $\E_i \phi_\alpha \cong \phi_\alpha \E_i$ if $\la \alpha,\alpha_i \ra = 0$ and $\E_{i,1} \phi_\alpha \cong \phi_\alpha \E_i$ if $\la \alpha,\alpha_i \ra = -1$. 

Likewise, \cite[Lemma 4.4]{CK4} now states $\T_i^{-1} \E_{j,1} \T_i \cong \T_j^{-1} \E_{i,1} \T_j$ if $\la i,j \ra = -1$. Taking adjoints also gives us the analogous isomorphisms for $\F$s instead of $\E$s.
\end{Remark}

We now explain how to obtain $\E_0$ and $\F_0$ starting with an $(L\gl_n,\t)_{DJ}$ action. First we rewrite the affine braid group action in its Kac--Moody presentation. More precisely, we define 
\begin{equation}\label{eq:T0def}
\T_0 \coloneqq \phi_1 \dots \phi_{n-1} \T_1^{-1} \T_2^{-1} \dots \T_{n-1}^{-1} \dots \T_2^{-1} \T_1^{-1}.
\end{equation}
We now have $\T_i \T_j \T_i \cong \T_j \T_i \T_j$ whenever $i-j = \pm 1 \pmod n$. We then define
\begin{equation}\label{eq:E0def}
\E_0 \coloneqq \T_1^{-1} \T_0^{-1} \E_1 \T_0 \T_1 \ \  \text{ and } \ \ \F_0 \coloneqq \T_1^{-1} \T_0^{-1} \F_1 \T_0 \T_1.
\end{equation}
This definition is partly justified in light of relation \eqref{eq:TiTjEi}.

\begin{Lemma}\label{lem:app1}
We have $\E_0 \cong \T_{n-1}^{-1} \T_0^{-1} \E_{n-1} \T_0 \T_{n-1}$ and likewise for $\F_0$. 
\end{Lemma}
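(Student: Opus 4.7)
The lemma asserts that the formula for $\E_0$ is symmetric under the exchange $1 \leftrightarrow n-1$, reflecting the fact that node $0$ is adjacent to both node $1$ and node $n-1$ in the affine Dynkin diagram. My plan is to exploit the palindromic nature of the element $w_0 \coloneqq \T_1 \T_2 \cdots \T_{n-1} \cdots \T_2 \T_1$ appearing in $\T_0 = \phi w_0^{-1}$, where $\phi \coloneqq \phi_1 \cdots \phi_{n-1}$. The braid element $w_0$ lifts the transposition $(1,n) \in S_n$, which has length $2n-3$ and also admits the reduced expression $s_{n-1} s_{n-2} \cdots s_1 \cdots s_{n-2} s_{n-1}$. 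By Matsumoto's theorem the two reduced expressions determine the same braid element, so one has the alternative presentation $w_0 \cong \T_{n-1} \T_{n-2} \cdots \T_1 \cdots \T_{n-2} \T_{n-1}$.

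Writing $w_0 = \T_1 v \T_1$ with $v = \T_2 \T_3 \cdots \T_{n-1} \cdots \T_3 \T_2$, cancellation gives $w_0^{-1} \T_1 \cong \T_1^{-1} v^{-1}$ and hence $\T_0 \T_1 \cong \phi \T_1^{-1} v^{-1}$. Dually, $w_0 = \T_{n-1} v'' \T_{n-1}$ with $v'' = \T_{n-2} \cdots \T_1 \cdots \T_{n-2}$ yields $\T_0 \T_{n-1} \cong \phi \T_{n-1}^{-1} (v'')^{-1}$. Conjugating $\E_1$ and $\E_{n-1}$ by these two expressions reduces the desired isomorphism to
\[ v \T_1 (\phi^{-1} \E_1 \phi) \T_1^{-1} v^{-1} \cong v'' \T_{n-1} (\phi^{-1} \E_{n-1} \phi) \T_{n-1}^{-1} (v'')^{-1}. \]

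Next, \cite[Lemma~4.8]{CK4} applied iteratively to $\phi = \phi_{\alpha_1 + \cdots + \alpha_{n-1}}$ identifies $\phi^{-1} \E_i \phi$ with the shifted loop generator $\E_{i,1}$ (up to grading shifts) for $i = 1$ and $i = n-1$, further reducing the problem to
\[ v \T_1 \E_{1,1} \T_1^{-1} v^{-1} \cong v'' \T_{n-1} \E_{n-1,1} \T_{n-1}^{-1} (v'')^{-1}. \]
The final step uses the loop analogue of \eqref{eq:TiTjEi}, namely \cite[Lemma~4.4]{CK4}: $\T_i^{-1} \E_{j,1} \T_i \cong \T_j^{-1} \E_{i,1} \T_j$ when $\langle i,j\rangle = -1$, in combination with the standard braid relations, to telescope $\E_{1,1}$ through the chain $\T_2, \T_3, \ldots, \T_{n-1}$ on the left side and $\E_{n-1,1}$ through $\T_{n-2}, \T_{n-3}, \ldots, \T_1$ on the right side. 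Both telescopings collapse to the same expression, since the underlying paths through the finite Dynkin diagram are precisely interchanged by the two reduced-expression presentations of $w_0$; this is already visible in the base case $n=3$, where $v = \T_2$ and $v'' = \T_1$, and one checks $\T_2 \T_1 \E_{1,1} \T_1^{-1} \T_2^{-1} \cong \T_1 \T_2 \E_{2,1} \T_2^{-1} \T_1^{-1}$ by a single application of Lemma~4.4 followed by the braid relation $\T_1 \T_2 \T_1 \cong \T_2 \T_1 \T_2$. The general case proceeds by an analogous induction.

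The main obstacle will be careful bookkeeping of the grading and cohomological shifts arising from the primed convention, and verifying that the two telescoping procedures through the chains of $\T_i$'s produce matching loop indices; this is essentially a combinatorial check that the braid-group automorphism implicit in the two reduced expressions for $w_0$ is compatible with the loop shift produced by $\phi$. The analogous statement for $\F_0$ follows either by taking right adjoints throughout or by repeating the same argument with $\F_i$ in place of $\E_i$.
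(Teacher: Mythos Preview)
Your proposal is correct and follows essentially the same route as the paper: both arguments use the two reduced expressions for the palindromic braid word $\T_1\cdots\T_{n-1}\cdots\T_1 \cong \T_{n-1}\cdots\T_1\cdots\T_{n-1}$, identify $\phi^{-1}\E_i\phi$ with $\E_{i,1}$ via \cite[Lemma~4.8]{CK4} (the paper phrases this as $\E_{1,1}\cong\phi_0\E_1\phi_0^{-1}$ with $\phi_0=\phi^{-1}$), and then finish by iterating \cite[Lemma~4.4]{CK4}. The only cosmetic difference is that the paper first uses the commutation of $\T_i$ with $\E_{1,1}$ for $i>2$ to shorten the conjugating braid words to $\T_{n-1}\cdots\T_1$ and $\T_1\cdots\T_{n-1}$ before matching them up, whereas you keep the longer words $v\T_1$ and $v''\T_{n-1}$ and telescope directly; both lead to the same endpoint.
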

\begin{proof}
Note that $\phi_1 \dots \phi_{n-1} \cong \phi_0^{-1}$, we have $\E_{1,1} \cong \phi_0 \E_1 \phi_0^{-1}$ and, from (\ref{eq:T0def}),
$$\T_0 \cong \phi_0^{-1} \T_1^{-1} \dots \T_{n-1}^{-1} \dots \T_1^{-1} \cong \phi_0^{-1} \T_{n-1}^{-1} \dots \T_1^{-1} \dots \T_{n-1}^{-1}.$$
So we get
\begin{align*}
\E_0
&\cong \T_1^{-1} (\T_{n-1} \dots \T_1 \dots \T_{n-1}) \phi_0 \E_1 \phi_0^{-1} (\T_{n-1}^{-1} \dots \T_1^{-1} \dots \T_{n-1}^{-1}) \T_1 \\
&\cong \T_{n-1} \dots \T_3 \T_1^{-1} \T_2 \T_1 \dots \T_{n-1} \E_{1,1} \T_{n-1}^{-1} \dots \T_1^{-1} \T_2^{-1} \T_1 \T_3^{-1} \dots \T_{n-1}^{-1} \\
&\cong \T_{n-1} \dots \T_2 \T_1 \E_{1,1} \T_1^{-1} \T_2^{-1} \dots \T_{n-1}^{-1}
\end{align*}
where, to obtain the last isomorphism, we used that $\T_i$ commutes with $\E_{1,1}$ if $i > 2$. 

A similar argument shows that 
$$\T_{n-1}^{-1} \T_0^{-1} \E_{n-1} \T_0 \T_{n-1} \cong \T_1 \dots \T_{n-1} \E_{n-1,1} \T_{n-1}^{-1} \dots \T_1^{-1}.$$
To finish the proof note that 
$$(\T_1 \dots \T_{n-1})^{-1}(\T_{n-1} \dots \T_1) \cong (\T_{n-2} \T_{n-1}^{-1}) \dots (\T_2 \T_3^{-1}) (\T_1 \T_2^{-1}).$$
Applying $\T_i \T_{i+1}^{-1} \E_{i,1} \T_{i+1} \T_i^{-1} \cong \E_{i+1,1}$ repeatedly we get that 
$$(\T_1 \dots \T_{n-1})^{-1}(\T_{n-1} \dots \T_1) \E_{1,1} (\T_{n-1} \dots \T_1)^{-1} (\T_1 \dots \T_{n-1}) \cong \E_{n-1,1}.$$
This completes the proof. 
\end{proof}

\begin{Corollary}\label{cor:app1}
We have 
$$ (\E_0 \1_\uk)^R \cong \1_\uk \F_0 \la \la \uk, \alpha_0 \ra + 1 \ra \ \ \text{ and } \ \ (\E_0 \1_\uk)^L \cong \1_\uk \F_0 \la - \la \uk, \alpha_0 \ra - 1 \ra$$ 
\begin{align*}
\E_0 \F_0 \1_\uk &\cong \F_0 \E_0 \1_\uk \bigoplus_{[\la \uk,\alpha_0 \ra]} \1_\uk \ \ \text{ if } \la \uk, \alpha_0 \ra \ge 0 \\
\F_0 \E_0 \1_\uk &\cong \E_0 \F_0 \1_\uk \bigoplus_{[-\la \uk,\alpha_0 \ra]} \1_\uk \ \ \text{ if } \la \uk, \alpha_0 \ra \le 0
\end{align*}
where $\alpha_0 = - \sum_{i=1}^{n-1} \alpha_i$. Moreover, $\F_0 \E_i \cong \E_i \F_0$ and $\E_0 \F_i \cong \F_i \E_0$ if $i \ne 0$. 
\end{Corollary}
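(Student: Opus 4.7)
The adjunctions and the $\sl_2$ relation for $(\E_0,\F_0)$ are obtained by conjugating the corresponding properties of $(\E_1,\F_1)$ through the invertible 1-morphism $\T_0\T_1$. Since $\T_i \1_\uk = \1_{s_i\uk} \T_i$, we have $\T_0 \T_1 \1_\uk = \1_{s_0 s_1 \uk} \T_0 \T_1$, and a direct calculation in the root lattice gives $s_1 s_0 \alpha_1 = s_1(\alpha_1 + \alpha_0) = \alpha_0$, so $\la s_0 s_1 \uk, \alpha_1 \ra = \la \uk, \alpha_0 \ra$. Using invertibility of $\T_0 \T_1$ one then computes
\[
(\E_0 \1_\uk)^R \cong (\T_0\T_1)^{-1} \, (\E_1 \1_{s_0 s_1 \uk})^R \, (\T_0\T_1) \cong \1_\uk \F_0 \la \la \uk, \alpha_0 \ra + 1 \ra,
\]
and the left-adjoint calculation is parallel. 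For the $\sl_2$ relation, the inner braid elements in $(\T_1^{-1}\T_0^{-1}\E_1\T_0\T_1)(\T_1^{-1}\T_0^{-1}\F_1\T_0\T_1)$ cancel, leaving $\T_1^{-1}\T_0^{-1}(\E_1\F_1)\1_{s_0 s_1 \uk}\T_0\T_1$, to which condition~(\ref{co:EF}) for $(\E_1,\F_1)$ applies; absorbing the outer braid elements back yields the decomposition of $\E_0\F_0\1_\uk$.

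For the commutations $\F_0 \E_i \cong \E_i \F_0$ (and dually $\E_0 \F_i \cong \F_i \E_0$) with $i \in I$, I would argue by the position of $i$ in the affine Dynkin cycle. If $i \in \{2,\dots,n-2\}$, so that $\la 0, i \ra = 0$, then both $\T_0$ and $\T_1$ (or $\T_{n-1}$, using the alternative expression from Lemma~\ref{lem:app1} when $\la 1, i \ra \ne 0$) commute with $\E_i$. The commutation of $\T_0$ is not immediate from \eqref{eq:T0def}, but follows by unpacking $\T_0 = \phi_1 \dotsm \phi_{n-1} \T_w^{-1}$ with $\T_w = \T_1 \T_2 \dotsm \T_{n-1} \dotsm \T_2 \T_1$: the reflection $w = (1,n) \in S_n$ fixes $\alpha_i$ for such $i$, so $\T_w$ commutes with $\E_i$, while $\phi_1 \dotsm \phi_{n-1}$ corresponds to the weight $-\alpha_0$, and $\la -\alpha_0, \alpha_i \ra = 0$ means it commutes with $\E_i$ by \cite[Lemma~4.8]{CK4}. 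Combined with $\F_1 \E_i \cong \E_i \F_1$ from condition~(\ref{co:EiFj}), conjugating through $\T_0\T_1$ yields the desired commutation.

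The main obstacle is the adjacent case $i \in \{1, n-1\}$, where $\la 0, i \ra = -1$ forces $\T_0$ to interact non-trivially with $\E_i$ and neither of the two expressions for $\F_0$ afforded by Lemma~\ref{lem:app1} avoids this interaction. To handle it, I would expand $\T_0$ via \eqref{eq:T0def} and track the effect of each $\phi_j$ and $\T_j$ on $\E_i$, converting between $\E_i$ and $\E_{i,1}$ as dictated by whether $\la \alpha, \alpha_i \ra$ is $0$ or $\pm 1$. The essential tool is the identity $\T_i^{-1} \E_{j,1} \T_i \cong \T_j^{-1} \E_{i,1} \T_j$ for $\la i,j \ra = -1$ from \cite[Lemma~4.4]{CK4}, which bridges the Kac--Moody and Drinfeld generators and allows all extra $\E_{i,1}$ factors produced when moving $\E_i$ past $\phi_j$ and $\T_j$ on one side to match those produced on the other. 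Verifying that these rearrangements close up cleanly modulo the $(L\gl_n,\t)$ relations is the technical heart of the argument.
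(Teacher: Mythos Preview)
Your treatment of the adjunctions and the $\sl_2$ relation via conjugation through $\T_0\T_1$, together with the computation $\la s_0 s_1 \uk, \alpha_1 \ra = \la \uk, \alpha_0 \ra$, is exactly what the paper does. Likewise, for $i \in \{2,\dots,n-2\}$ your argument matches the paper's: one checks that $\T_0$ commutes with $\E_i$ (or $\F_i$) by noting that both $\phi_1\dotsm\phi_{n-1}$ and $\T_1\dotsm\T_{n-1}\dotsm\T_1$ do, then conjugates the known commutation $\E_i\F_1 \cong \F_1\E_i$; the case $i=2$ (resp.\ $i=n-2$) is handled by switching to the alternative expression of Lemma~\ref{lem:app1}.

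Where you diverge is the adjacent case $i\in\{1,n-1\}$. Your plan is to expand $\T_0$ fully via \eqref{eq:T0def} and bookkeep $\E_i$ through every $\phi_j$ and $\T_j$, invoking \cite[Lemma~4.4]{CK4} to match the $\E_{i,1}$'s produced on each side. This may well close up, but the paper avoids that computation entirely by using the intermediate identity extracted in the \emph{proof} of Lemma~\ref{lem:app1}:
\[
\E_0 \cong \T_{n-1}\dotsm \T_2 \T_1 \,\E_{1,1}\, \T_1^{-1}\T_2^{-1}\dotsm \T_{n-1}^{-1}.
\]
With this expression the case $i=1$ is a three-line calculation: commute $\F_1$ past $\T_{n-1}\dotsm\T_3$, use $\F_1\T_2\T_1 \cong \T_2\T_1\F_2$ from \eqref{eq:TiTjEi}, then use that $\F_2$ commutes with $\E_{1,1}$ (since $2\ne 1$), and reverse. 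The case $i=n-1$ is symmetric via the other intermediate expression in that proof. So the ``technical heart'' you anticipate dissolves once you use this loop-realization form of $\E_0$ rather than unfolding $\T_0$ from scratch.
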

\begin{proof}
We have
\begin{align*}
\E_0 \F_0 \1_\uk 
&\cong \T_1 \T_0 \E_1 \F_1 \1_{s_0 s_1 \cdot \uk} \T_0^{-1} \T_1^{-1} \\
&\cong \T_1 \T_0 \left( \F_1 \E_1 \1_{s_0 s_1 \cdot \uk} \bigoplus_{[\la \uk, \alpha_0 \ra]} \1_{s_0 s_1 \cdot \uk} \right) \T_0^{-1} \T_1^{-1} \\
&\cong \F_0 \E_0 \1_\uk \bigoplus_{[\la \uk, \alpha_0 \ra]} \1_{\uk}
\end{align*}
where we used that $\la s_0 s_1 \cdot \uk, \alpha_1 \ra = \la \uk, s_1 s_0 \cdot \alpha_1 \ra = \la \uk, \alpha_0 \ra$ in the second line. The other commutation relation as well as the adjunction properties follow similarly. 

Finally, we need to check that  $\E_0$ commutes with $\F_i$ if $i \ne 0$. First note that if $i=2,\dots,n-2$ then $\F_i$ commutes with $\phi_0$ and also with $\T_1 \dots \T_{n-1} \dots \T_1$. This implies that it commutes with $\T_0$ and then 
\begin{align*}
\F_i \E_0 &\cong \F_i \T_1^{-1} \T_0^{-1} \E_1 \T_0 \T_1 \cong \T_1^{-1} \T_0^{-1} \F_i \E_1 \T_0 \T_1 \\
&\cong \T_1^{-1} \T_0^{-1} \E_1 \F_i \T_0 \T_1 \cong \T_1^{-1} \T_0^{-1} \E_1 \T_0 \T_1 \F_i \cong \E_0 \F_i
\end{align*}
where we also assumed that $i \ne 2$ to get that $\F_i$ commutes with $\T_1$. If $i=2$ then we use Lemma \ref{lem:app1} to write $\E_0$ as $\T_{n-1}^{-1} \T_0^{-1} \E_{n-1} \T_0 \T_{n-1}$ and repeat the argument. 

Finally, using the expression for $\E_0$ from the proof of Lemma \ref{lem:app1}, we have
\begin{align*}
\F_1 \E_0 
&\cong \F_1 \T_{n-1} \dots \T_1 \E_{1,1} \T_1^{-1} \dots \T_{n-1}^{-1} \\
&\cong \T_{n-1} \dots \T_3 \F_1 \T_2 \T_1 \E_{1,1} \T_1^{-1} \dots \T_{n-1}^{-1} \\
&\cong \T_{n-1} \dots \T_1 \F_2 \E_{1,1} \T_1^{-1} \dots \T_{n-1}^{-1} \\
&\cong \T_{n-1} \dots \T_1 \E_{1,1} \F_2 \T_1^{-1} \dots \T_{n-1}^{-1} \cong \E_0 \F_1 
\end{align*}
where to get the second last isomorphism we used that $\E_{1,1}$ commutes with $\F_i$ if $i \ne 1$. The proof that $\F_{n-1} \E_0 \cong \E_0 \F_{n-1}$ is similar. This completes the proof that $\F_i$ and $\E_0$ commute whenever $i \ne 0$. Taking adjoints also shows that $\E_i$ and $\F_0$ commute. 
\end{proof}

\begin{Remark}\label{rem:primes}
\cite{CK4} also discussed a ``shifted'' action using generators $\E'_{i,1}$ and $\F'_{i,-1}$ in place of $\E_{i,1}$ and $\F_{i,-1}$. If we use these generators (together with $\E_i,\F_i$ for $i \in I$) then the complexes $\T'_i$ and $\T'_{i,1}$ (defined as in Section \ref{sec:our_actions}) together with $\phi'_i \coloneqq \T'_i \T'_{i,1}$ can be used to define $\E_0$ and $\F_0$ as above. Then these $\E_0,\F_0$ also satisfy the relations in Corollary \ref{cor:app1}.
\end{Remark}

\section{Some deformation theory}\label{sec:appdeform}

We recall some results regarding deformations and obstructions classes. We use \cite{HT} as a reference. 

Consider a flat deformation $\pi\colon \tY \rightarrow \bA^n$ of a smooth variety $Y = \pi^{-1}(0)$. We suppose $\tY$ carries a $\C^\times$ action and that $\pi$ is equivariant if we equip $\bA^n$ with the dilating action $t \cdot \ux = t^2 \ux$ where $\ux = (x_1, \dots, x_n) \in \bA^n$. Let $i\colon Y \rightarrow \tY$ denote the inclusion. 

Let $h = id \times i\colon Y \times Y \rightarrow Y \times \tY$ and consider $H \coloneqq h^* h_* \O_{\Delta_Y}$ where $\Delta_Y \subset Y \times Y$ is the diagonal. A standard calculation shows that $\sH^0(H) \cong \O_{\Delta_Y}$ and $\sH^{-1}(H) \cong \O_Y \otimes_\C \Omega(\bA^n)_0$ where $\Omega(\bA^n)_0 \cong \C^n \{-2\}$ is the fiber of the cotangent bundle at $0 \in \bA^n$. 

This allows us to define
$$\varpi(Y/\tY) \in \Ext^2_{Y \times Y}(\O_{\Delta_Y}, \O_{\Delta_Y} \otimes_\C \Omega(\bA^n)_0)$$ 
as the extension class of the exact triangle
$$\sH^{-1}(H)[1] \rightarrow \tau^{\ge -1}(H) \rightarrow \sH^0(H)$$
(cf. \cite[Def. 2.9]{HT}). In particular, it gives a linear map 
$$\varpi\colon T_0(\bA^n) \rightarrow \Hom_{Y \times Y}(\O_{\Delta_Y}, \O_{\Delta_Y} [2] \{-2\})$$
where $T_0(\bA^n)$ denotes the tangent space at $0 \in \bA^n$. 

Now, for a tangent vector $v \in T_0(\bA^n)$, denote by $\pi_v\colon \tY_v \rightarrow \Spec (\C[\vareps]/\vareps^2)$ the pullback of $\pi\colon \tY \rightarrow \bA^n$ via the map $\Spec (\C[\vareps]/\vareps^2) \rightarrow \bA^n$ corresponding to $v$. Consider an object $\sM \in D(Y)$. One of the main result of \cite{HT} (Corollary 3.4) states that $\sM$ deforms to $\tY_v$ if and only if the map 
$$\O_{\Delta_Y} * \sM \xrightarrow{\varpi(v) * id} \O_{\Delta_Y} * \sM [2]\{-2\}$$
is zero, where $*$ denotes convolution with $\sM$ thought of as a kernel on $Y \times pt$.

More generally, if one has two deformations $\tY_1 \rightarrow \C^n$ and $\tY_2 \rightarrow \C^n$ and two vectors $v_1,v_2 \in T_0(\bA^n)$ then we can consider the restriction $(\tY_1 \times \tY_2)_{(v_1,v_2)} \rightarrow \Spec (\C[\vareps]/\vareps^2)$ where $(v_1,v_2)$ is a tangent vector over the origin in $\bA^n \times \bA^n$. Then a sheaf (kernel) $\sM \in D(Y_1 \times Y_2)$ deforms to $(\tY_1 \times \tY_2)_{(v_1,v_2)}$ if and only if 
$$\O_{\Delta_{Y_2}} * \sM * \O_{\Delta_{Y_1}} \xrightarrow{- \varpi(v_2) * id * id + id * id * \varpi(v_1)} \O_{\Delta_{Y_2}} * \sM * \O_{\Delta_{Y_1}} [2]\{-2\}$$
is zero.

\mbox{}

\end{document}